\newtheorem{thm}{Theorem}[section]
\newtheorem{cor}[thm]{Corollary}
\newtheorem{lem}[thm]{Lemma}
\newtheorem{prop}[thm]{Proposition}
\newtheorem{example}[thm]{Example}
\theoremstyle{definition}
\newtheorem{defn}[thm]{Definition}
\theoremstyle{remark}
\newtheorem{rem}[thm]{Remark}
\numberwithin{equation}{section}
\begin{document}

\title{Cohomology of  Hom-Lie superalgebras \\ and $q$-deformed Witt superalgebra}

\author{Faouzi AMMAR
\and Abdenacer MAKHLOUF
\and Nejib SAADAOUI }

\maketitle

\begin{abstract}
The purpose of this paper is to define the representation and the cohomology of  Hom-Lie superalgebras. Moreover we study Central extensions and provide as application the computations of the derivations and second cohomology group  of $q$-deformed Witt superalgebra.
\end{abstract}

%\noindent {\bf AMS MSC 2000:} 17A30,16Y99,17A01,17A20,17D25 \\
%{\bf Keywords:} Hom-Lie algebra, Hom-Associative
%algebra, Hom-Leibniz algebra, Hom-Lie admissible
%algebra.
%classification \\

%\maketitle
%\date{}%
%\dedicatory{}%
%\commby{}%
% ----------------------------------------------------------------

% ----------------------------------------------------------------
%\newpage
\section*{Introduction}
Hom-Lie algebras and other Hom-algebras structures have been widely investigated these last years. They were introduced and studied in \cite{HLS,LS1,LS2,LS3,LSgraded} motivated initially by examples of deformed Lie algebras coming from
twisted discretizations of vector fields. The paradigmatic examples are $q$-deformations of Witt and Virasoro algebras based on $\sigma$-derivation \cite{AizawaSaito,Hu,Liu,HLS}.
 Hom-Lie superalgebras were studied in \cite{AmmarMakhloufJA2010}. Cohomology theory of  Hom-Lie algebras was studied in    \cite{AEM,MS3,sheng}.
The purpose of  this paper is to study  representations and  cohomology of Hom-Lie superalgebras. As application,  we provide some calculations for $q$-deformed Witt superalgebra.

The paper is organized as follows. In the first section we give the definitions and some key constructions of Hom-Lie superalgebras. Section $2$ is dedicated to the representation theory 
Hom-Lie superalgebra including adjoint and coadjoint representation. In Section $3$ we construct   family of  cohomologies of Hom-Lie superalgebras. In Section $4$, we discuss extensions of Hom-Lie superalgebras  and their connection to cohomology.  In the last section we compute the derivations  and scalar second  cohomology group of the $q$-deformed Witt superalgebra.

\section{Hom-Lie superalgebras}

In this section,  we review the theory of Hom-Lis superalgebras established in \cite{AmmarMakhloufJA2010} and generalize some results of \cite{Said}.

Let  $\mathcal{G}$ be a linear superspace over a field $\mathbb{K}$ that is a $\mathbb{Z}_{2}$-graded linear space with a direct sum $\mathcal{G}=\mathcal{G}_{0}\oplus \mathcal{G}_{1}.$\\
The elements of $\mathcal{G}_{j}$, $j\in \mathbb{Z}_{2},$ are said to be homogenous  of parity $j.$ The parity of  a homogeneous element $x$ is denoted by $|x|.$\\
The space $End (\mathcal{G})$ is $\mathbb{Z}_{2}$ graded with a direct sum $End (\mathcal{G})=(End (\mathcal{G}))_{0}\oplus(End (\mathcal{G}))_{1}$ where
$(End (\mathcal{G}))_{j}=\{f\in End (\mathcal{G}) /f (\mathcal{G}_{i})\subset \mathcal{G}_{i+j}\}.$
The elements of $(End (\mathcal{G}))_{j}$  are said to be homogenous of parity $j.$

%In this subsection we will make some results about Hom-Lie superalgebra.
\begin{defn}
A Hom-Lie superalgebra  is a triple $(\mathcal{G},\ [.,.],\ \alpha)$\ consisting of a superspace $\mathcal{G}$, an even bilinear map \ $\ [.,.]:\mathcal{G}\times \mathcal{G}\rightarrow \mathcal{G}$ \ and an even superspace homomorphism \ $ \alpha:\mathcal{G}\rightarrow \mathcal{G} \ $satisfying
\begin{eqnarray}
&&[x,y]=-(-1)^{|x||y|}[y,x],\\
&&(-1)^{|x||z|}[\alpha(x),[y,z]]+(-1)^{|z||y|} [\alpha(z),[x,y]]+(-1)^{|y||x|} [\alpha(y),[z,x]]=0\label{jacobie}
\end{eqnarray}
for all homogeneous element x, y, z in $\mathcal{G}.$\\
Let $\left( \mathcal{G}, [\cdot, \cdot], ,\alpha \right) $ and $\left(
\mathcal{G}^{\prime }, [\cdot, \cdot] ^{\prime },\alpha^{\prime
}\right) $ be two Hom-Lie superalgebras. An even homomorphism\\
 $f\ :\mathcal{G}\rightarrow \mathcal{G}^{\prime }$ is said
to be a
\emph{morphism of  Hom-Lie superalgebras} if
\begin{eqnarray} [f(x), f(y)] ^{\prime }&=&f ([x, y])\quad \forall x,y\in \mathcal{G}
\\ f\circ \alpha&=&\alpha^{\prime
}\circ f.
\end{eqnarray}

\end{defn}
\begin{rem}
We recover the classical Lie superalgebra when $\alpha=id.$\\
The Hom-Lie algebra are obtained when the part of parity one is trivial.
\end{rem}
\begin{example}
Let  $\mathcal{G}=\mathcal{G}_0 \oplus \mathcal{G}_1$ be a $3$-dimensional superspace where $\mathcal{G}_0$ is generated by $e_1 $ and $\mathcal{G}_1$ is generated by $ e_2,\ e_3.$
The triple $(\mathcal{G},[.,.],\alpha)$ is  Hom-Lie superalgebra
 defined by\\ $[e_1 ,e_2]=2e_2 ,[e_1 , e_3]=2e_3$ and  $[e_2,e_3]=e_1$, with  $\alpha(e_1)=e_1,\ \alpha(e_2)=e_3, \ \alpha(e_3)=-e_2. $
\end{example}

\begin{defn}
Let $(\mathcal{G},[.,.],\alpha)$ be a Hom-Lie superalgebra. A Hom-Lie superalgebra is called
\begin{itemize}
\item multiplicative  if $\forall x,y\in \mathcal{G}$ we have $\alpha([x,y])=[\alpha(x),\alpha(y)];$
\item regular if $\alpha$ is an automorphism;
\item involutive  if $\alpha$ is an involution, that is $\alpha^{2}=id .$
\end{itemize}
The center of the Hom-Lie superalgebra,  denoted $\mathcal{Z} (\mathcal{G})$, is defined by $$\mathcal{Z} (\mathcal{G})=\{x\in \mathcal{G}:\ [x,y]=0\  \forall y\in \mathcal{G} \}.$$
\end{defn}

%mmmmmmmmmmmmmmmmmmmmmmmmmmmmmmmmmmmmmmmmmmmmmmmmmmmmmmmmmmmmmmmmmmmmmmmmmmmmmmmm13/12/mulhouse
The following theorem generalizes the twisting principle stated  in \cite{AmmarMakhloufJA2010,Yau:homology} in the following sense: starting from a Hom-Lie superalgebra and an even
Lie superalgebras endomorphism, we construct a new Hom-Lie superalgebra.

%The following theorem gives a way to construct  Hom-Lie superalgebra, starting from a Lie superalgebra.
\begin{thm}
Let $(\mathcal{G},[\cdot,\cdot],\alpha)$ be a Hom-Lie superalgebra,  and  $\beta:\mathcal{G}\rightarrow \mathcal{G} $  be an
  even Lie superalgebra endomorphism.
Then
$(\mathcal{G},[\cdot,\cdot]_\beta,\beta o \alpha  )$, where $[x,y]_\beta=\beta([x,y])$,  is a Hom-Lie
superalgebra.

Moreover, suppose that  $(\mathcal{G}',[\cdot,\cdot]')$ is a Lie superalgebra and  $\alpha ' : \mathcal{G}'\rightarrow \mathcal{G}'$ is a Lie superalgebra endomorphism. If $f:\mathcal{G}\rightarrow \mathcal{G}'$ is a Lie superalgebra morphism that satisfies $f\circ \beta =\alpha'\circ f$ then
$$f:(\mathcal{G},[\cdot,\cdot]_\beta,\beta o \alpha  )\longrightarrow (\mathcal{G}',[\cdot,\cdot]',\alpha ')
$$
is a morphism of Hom-Lie superalgebras.
\end{thm}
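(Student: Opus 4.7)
The plan is to verify the Hom-Lie superalgebra axioms for $(\mathcal{G},[\cdot,\cdot]_\beta,\beta\circ\alpha)$ and then check the two morphism conditions for $f$. The key property of $\beta$ I will exploit throughout is that it preserves the bracket, $\beta([x,y])=[\beta(x),\beta(y)]$. Super-skew-symmetry of $[\cdot,\cdot]_\beta$ is immediate from that of $[\cdot,\cdot]$ and the linearity of $\beta$. For the Hom-Jacobi identity with the new twist $\beta\circ\alpha$, I first compute a single cyclic term,
\begin{align*}
[\beta\alpha(x),[y,z]_\beta]_\beta = \beta\bigl([\beta\alpha(x),\beta([y,z])]\bigr) = \beta^2\bigl([\alpha(x),[y,z]]\bigr),
\end{align*}
pulling one copy of $\beta$ out of each slot via the morphism property. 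Summing the three super-cyclic permutations of $(x,y,z)$ with their Koszul signs and factoring out $\beta^2$ reduces the Hom-Jacobi identity for the twisted structure to $\beta^2$ applied to identity (\ref{jacobie}) for $(\mathcal{G},[\cdot,\cdot],\alpha)$, which vanishes.

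For the morphism part, I need to check that $f$ intertwines the brackets and the twisting maps. The bracket condition unfolds as
\begin{align*}
f([x,y]_\beta)=f\bigl(\beta([x,y])\bigr)=\alpha'\bigl(f([x,y])\bigr)=\alpha'\bigl([f(x),f(y)]'\bigr),
\end{align*}
using first the hypothesis $f\circ\beta=\alpha'\circ f$ and then that $f$ is a Lie superalgebra morphism; the last expression is precisely the $\alpha'$-twisted bracket on $\mathcal{G}'$, which is the intended Hom-Lie bracket on the target. The twist-compatibility $f\circ(\beta\circ\alpha)=\alpha'\circ f$ then follows directly from $f\circ\beta=\alpha'\circ f$ in the setting (where $\alpha$ acts as the identity on $\mathcal{G}$) in which the moreover part is naturally phrased.

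The proof is essentially routine bookkeeping in the spirit of Yau's twisting principle; the only genuine care needed is in tracking the Koszul signs in the super-cyclic expansion so that the factor $\beta^2$ comes out uniformly from all three terms and one can invoke the Hom-Jacobi identity of the original structure at a single stroke.
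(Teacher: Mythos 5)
Your proof is correct and essentially identical to the paper's: the graded Hom-Jacobi identity for $[\cdot,\cdot]_\beta$ with twist $\beta\circ\alpha$ is reduced, term by term, to $\beta^{2}$ applied to the original identity \eqref{jacobie}, and the bracket condition for $f$ follows by chaining $f\circ\beta=\alpha'\circ f$ with the morphism property of $f$, landing on the twisted bracket $\alpha'\circ[\cdot,\cdot]'$ on $\mathcal{G}'$, which is indeed the bracket the paper's own computation produces (so the target in the statement should be read as $(\mathcal{G}',\alpha'\circ[\cdot,\cdot]',\alpha')$). On the twist-compatibility $f\circ(\beta\circ\alpha)=\alpha'\circ f$ you are in fact more scrupulous than the paper, whose proof checks only the bracket identity; as you observe, this condition needs $f\circ\alpha=f$ (e.g.\ $\alpha=\mathrm{id}$, the case of Yau's twisting principle), so the ``moreover'' part should be understood with that proviso.
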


\begin{proof}
We show that $(\mathcal{G},[\cdot,\cdot]_\beta,\beta o \alpha)$ satisfies the graded Hom-Jacobi identity(\ref{jacobie}). Indeed
\begin{eqnarray*}
\circlearrowleft_{x,y,z}(-1)^{|x||z|}\Big[\beta o  \alpha (x),[y,z]_{\beta}\Big]_{\beta}&=&\circlearrowleft_{x,y,z}(-1)^{|x||z|}\Big[\beta o  \alpha (x),\beta([y,z])\Big]_{\beta}\\
&=&\beta^{2} \Bigg(\circlearrowleft_{x,y,z}(-1)^{|x||z|}\Big[  \alpha (x),[y,z]\Big]\Bigg)\\
&=&0.
\end{eqnarray*}

The second assertion follows from
$$ f\Big( [x,y]_\beta \Big)= f\Big( [\beta(x),\beta(y)]\Big)
=\Big[f\circ \beta (x),f\circ \beta (y)\Big]'
= \Big[\alpha'\circ f(x),\alpha' \circ f (y)\Big]'=\Big[f(x),f(y)\Big]'_{\alpha'}. $$
\end{proof}
\begin{example}
We derive the following particular cases:
\begin{enumerate}
\item If $(\mathcal{G},[\cdot,\cdot],\alpha)$ is  a multiplicative Hom-Lie superalgebra then, for any $n\in \mathbb{N}$, $ (\mathcal{G},\alpha^n\circ [\cdot,\cdot],\alpha^{n+1})$ is  a multiplicative Hom-Lie superalgebra.
\item If $(\mathcal{G},[\cdot,\cdot])$ is  a Lie superalgebra and a self-map $\alpha$ on $\mathcal{G}$ is an even Lie superalgebra morphism then $(\mathcal{G},[\cdot,\cdot],\alpha)$ is  a multiplicative Hom-Lie superalgebra. 
\item If $(\mathcal{G},[.,.],\alpha)$ be a regular Hom-Lie superalgebra, then $(\mathcal{G},{\alpha^{-1}}\circ [.,.])$ is a Lie superalgebra.
\end{enumerate}
\end{example}

%\begin{prop}
%Let $(\mathcal{G},[.,.],\alpha)$ be a regular Hom-Lie superalgebra . Then $(\mathcal{G},[.,.]_{\alpha^{-1}})$ is a Lie superalgebra.
%\end{prop}
%\begin{proof}It follows from\\
%\begin{eqnarray*}
%\circlearrowleft_{x,y,z}(-1)^{|x||z|}\Big[x,[y,z]_{\alpha^{-1}}\Big]_{\alpha^{-1}}&=&\circlearrowleft_{x,y,z}(-1)^{|x||z|}
%\alpha^{-1}\Bigg(\Big[x,[y,z]_{\alpha^{-1}}\Big]\Bigg)\\&=&\circlearrowleft_{x,y,z}(-1)^{|x||z|}
%\alpha^{-2}\Bigg(\Big[\alpha(x),[y,z]\Big]\Bigg)=0.
%\end{eqnarray*}
%\end{proof}

%We may also derive new Hom-Lie superalgebras from a given multiplicative Hom-Lie superalgebras using the following procudure.
%
%\begin{defn}
%Let $(\mathcal{G},[.,.],\alpha)$ be a multiplicative Hom-Lie superalgebras and $n\in \mathbb{N}.$ Then its $nth$ derived Hom-Lie superalgebras of is defined by $$\mathcal{G}_{(n)}=\Big(\mathcal{G},[.,.]^{(n)}=\alpha^{n}\circ[.,.],\alpha^{n+1}\Big)$$
%Observe that for $n\geq 1$ and $x,y,z \in \mathcal{G}$ we have
%\begin{eqnarray*}
%  \Big[[x,y]^{(n)},\alpha^{n+1}(z)\Big]^{(n)} &=&\alpha^{n}\Bigg(\Big[\alpha^{n}([x,y]),\alpha^{n+1}(z)\Big]\Bigg)\\
%  &=&\alpha^{2n}\Bigg(\Big[[x,y],\alpha(z)\Big]\Bigg),
%\end{eqnarray*}
%\end{defn}
%result we have the following theorem
%\begin{thm}
%Let $(\mathcal{G},[.,.],\alpha)$ be a multiplicative Hom-Lie superalgebra. Then its $nth$ derived Hom-Lie superalgebra is a Hom-Lie superalgebra.
%\end{thm}

In the following we construct Hom-Lie superalgebras involving elements of the centroid of Lie superalgebras. Let $(\mathcal{G},[.,.])$ be a Lie superalgebra.
%An even endomorphism $\theta \in (End(\mathcal{G}))_{0}$  is said to be an element of the centroid  if $\theta([x,y])=[\theta(x),y]$ for any $x,y\in \mathcal{G}.$
The centroid is defined by
$$Cent(\mathcal{G})=\{\theta \in End(\mathcal{G}):\theta([x,y])=[\theta(x),y], \ \forall x,y \in \mathcal{G}\}=(Cent(\mathcal{G}))_{0}\oplus (Cent(\mathcal{G}))_{1}.$$ The $Cent(\mathcal{G})$ is a subsuperpace of $End(\mathcal{G}).$
%The same definition is assumed for Hom-Lie superalgebras.
\begin{prop}
Let $(\mathcal{G},[.,.])$ be a Lie superalgebra and $\theta \in (Cent(\mathcal{G}))_{0}\subset(End(\mathcal{G}))_{0}.$ Set for $x,y \in \mathcal{G}$ $$\{x,y\}=\theta([x,y]).$$
Then $(\mathcal{G},\{.\},\theta)$  is a Hom-Lie superalgebras.
\end{prop}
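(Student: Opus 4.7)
The plan is to verify the two axioms of a Hom-Lie superalgebra directly from the centroid property, reducing everything to the classical graded Jacobi identity for $(\mathcal{G},[\cdot,\cdot])$.

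First I would record the basic observations that make the bracket $\{\cdot,\cdot\}=\theta\circ[\cdot,\cdot]$ well-behaved: since $\theta$ is even and $[\cdot,\cdot]$ is even, $\{\cdot,\cdot\}$ is an even bilinear map, and $\theta$ is itself an even endomorphism, so we have the right parity data for a Hom-Lie superalgebra. The graded skew-symmetry is immediate:
\begin{equation*}
\{x,y\}=\theta([x,y])=-(-1)^{|x||y|}\theta([y,x])=-(-1)^{|x||y|}\{y,x\}.
\end{equation*}

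Next I would extract from the centroid condition the symmetric form I actually need. Starting from $\theta([a,b])=[\theta(a),b]$ and applying graded skew-symmetry (using that $|\theta|=0$), one gets the companion identity $\theta([a,b])=[a,\theta(b)]$. These two relations can be iterated to pull $\theta$ through either slot of a nested bracket, and this is the only real tool needed.

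The key computation is the graded Hom-Jacobi identity (\ref{jacobie}) for $(\mathcal{G},\{\cdot,\cdot\},\theta)$. Using the identities above,
\begin{equation*}
\{\theta(x),\{y,z\}\}=\theta\bigl([\theta(x),\theta([y,z])]\bigr)=\theta\bigl(\theta([\theta(x),[y,z]])\bigr)=\theta^{3}\bigl([x,[y,z]]\bigr),
\end{equation*}
and similarly for the two cyclic permutations. Summing cyclically with the appropriate sign factors gives
\begin{equation*}
\circlearrowleft_{x,y,z}(-1)^{|x||z|}\{\theta(x),\{y,z\}\}=\theta^{3}\Bigl(\circlearrowleft_{x,y,z}(-1)^{|x||z|}[x,[y,z]]\Bigr)=0,
\end{equation*}
the inner sum vanishing by the classical graded Jacobi identity of $\mathcal{G}$.

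There is no genuine obstacle here; the entire argument is the centroid pull-through plus one application of the original Jacobi identity. The only thing to be careful about is the sign bookkeeping when invoking $\theta([a,b])=[a,\theta(b)]$, which relies on $\theta$ being even; had we allowed $\theta\in(\mathrm{Cent}(\mathcal{G}))_{1}$ the signs would shift and the statement would need modification.
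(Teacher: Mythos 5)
Your proof is correct and follows essentially the same route as the paper: graded skew-symmetry is immediate from evenness of $\theta$, and the Hom-Jacobi identity is reduced to the classical graded Jacobi identity of $(\mathcal{G},[\cdot,\cdot])$ by pulling $\theta$ through the brackets via the centroid property. The only cosmetic difference is that you factor all three copies of $\theta$ out front, writing $\{\theta(x),\{y,z\}\}=\theta^{3}([x,[y,z]])$ and applying Jacobi at $(x,y,z)$, whereas the paper distributes one $\theta$ to each slot, writing $[\theta(x),[\theta(y),\theta(z)]]$ and applying Jacobi at $(\theta(x),\theta(y),\theta(z))$; both hinge on exactly the same centroid pull-through and the fact that $\theta$ preserves parity.
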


\begin{proof}
For $\theta \in (Cent(\mathcal{G}))_{0},$ we have \\
$$\{x,y\}=\theta([x,y])=-(-1)^{|x||y|}\theta([y,x])=-(-1)^{|x||y|}[\theta(y),x]=[x,\theta(y)].$$
Then $\{x,y\}=[x,\theta(y)]=(-1)^{|x||y|}[\theta(y),x] =- (-1)^{|x||y|}   \{y,x\}$.\\
Also we have
\begin{eqnarray*}
  \{\theta(x),\{y,z\}\} =    \{\theta(x),[y,\theta(z)]\}
= \Big[\theta(x),\theta([y,\theta(z)])\Big] .                                                                  = \Big[\theta(x),[\theta(y),\theta(z)])\Big]
\end{eqnarray*}
It follows that $$\circlearrowleft_{x,y,z}(-1)^{|x||z|}\{\theta(x),\{y,z\}\}=\circlearrowleft_{x,y,z}(-1)^{|\theta(x)||\theta(z)|}\Big[\theta(x),[\theta(y),\theta(z)]\Big]
=0.$$
Since $(\mathcal{G},[.,.])$ is a Lie superalgebra, then  the super Hom-Jacobi identity is satisfied. Thus $(\mathcal{G},\{. , .\},\theta)$  is a Hom-Lie superalgebra.
\end{proof}

%%%%%%%%%%%%%%%%%%%%%%%%%%%%%%%%%%%%%%%%%%%%%%%%%%%%%%%%%%%%%%%%%%%%%%%%%%%%%%%%%%%%%%%%%
\section{Derivations of  Hom-Lie superalgebras}
We provide in the following a graded version of the study of derivations of Hom-Lie algebras stated in \cite{sheng}. Let $(\mathcal{G},[.,.],\alpha)$ be a Hom-Lie superalgebra, denote by   $\alpha^{k}$  the $k$-times composition of $\alpha,$ i.e.
$\alpha^{k}=\alpha\circ \cdots \circ \alpha (\textrm{$k$-times}).$
In particular, $\alpha^{-1}=0,\ \alpha^{0}=Id $ and $\alpha^{1}=\alpha.$\\
\begin{defn}
For any $k\geq -1,$ we call $D\in (End(\mathcal{G}))_{i}$ where  $i \in\mathbb{Z}_{2}$ a $\alpha^{k}$- derivation of the Hom-Lie superalgebra $(\mathcal{G},[.,.],\alpha)$ if $\alpha \circ D=D \circ \alpha$ and
\begin{eqnarray*}
D([x,y])=[D(x),  \alpha^{k}(y)]+(-1)^{|x||D|}[\alpha^{k}(x),D(y)]\ \text{ for  all  homogeneous elements }  x, y \in \mathcal{G}.
\end{eqnarray*}
\end{defn}

%$D$ is called a derivation where $\sigma o D=D o \sigma,$ \  and $\sigma^{-1} o D $ is a derivation.
%\begin{rem}
%We check that if $D$ is a $\sigma$-derivation so we $D([x,y])=[D(x),\sigma o\alpha(y)]+(-1)^{|x||D|}[\sigma o\alpha(x),D(y)]\ $
%\end{rem}
%\begin{rem}
%Where $\mathcal{G}$ is Hom-Lie algebra and $\sigma = \alpha^{k}$ we find the definition of %$\alpha^{k}$-derivation given in \cite{sheng}.
%\end{rem}
We denote by $Der_{\alpha^{k}}(\mathcal{G})=(Der_{\alpha^{k}}(\mathcal{G}))_{0}\oplus (Der_{\alpha^{k}}(\mathcal{G})_{1}$ the set of $\alpha^{k}$-derivations of the Hom-Lie superalgebra $(\mathcal{G},[.,.],\alpha),$
and  $$\displaystyle Der(\mathcal{G})=\oplus_{k\geq-1}Der_{\alpha^{k}}(\mathcal{G}).$$

 For any  homogeneous element $a \in \mathcal{G} ,$ satisfying $\alpha(a)=a,$ define $ad_{k}(a)\in End(\mathcal{G})$ by $$ad_{k}(a)(x)=[a,\alpha^{k}(x)],\ \forall x\in\mathcal{G}.$$ 
Notice that $ad_{k}(a)$ and $a$ are of the same parity.
\begin{prop}Let $(\mathcal{G},[.,.],\alpha)$ be a multiplicative Hom-Lie superalgebra.
Then $ad_{k}(a)$ is an $\alpha^{k+1}$-derivation, which we call inner $\alpha^{k+1}$-derivation.
 \end{prop}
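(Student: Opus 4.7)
The plan is to verify directly the two conditions required of an $\alpha^{k+1}$-derivation: the commutation $\alpha \circ D = D \circ \alpha$ for $D=ad_k(a)$, and the graded twisted Leibniz identity. Both will follow by combining multiplicativity of $\alpha$, the hypothesis $\alpha(a)=a$, graded antisymmetry, and the Hom-Jacobi identity \eqref{jacobie} applied to the triple $(a,\alpha^k(x),\alpha^k(y))$.

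First I would handle the commutation. For any homogeneous $x$,
\[
\alpha(ad_k(a)(x))=\alpha([a,\alpha^k(x)])=[\alpha(a),\alpha^{k+1}(x)]=[a,\alpha^{k+1}(x)]=ad_k(a)(\alpha(x)),
\]
using multiplicativity of $\alpha$ and $\alpha(a)=a$. Note that since $ad_k(a)$ has the same parity as $a$, the sign $(-1)^{|x||D|}$ in the derivation identity equals $(-1)^{|x||a|}$.

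For the Leibniz identity, the goal is to show
\[
[a,\alpha^k([x,y])] = [[a,\alpha^k(x)],\alpha^{k+1}(y)]+(-1)^{|x||a|}[\alpha^{k+1}(x),[a,\alpha^k(y)]].
\]
By multiplicativity, the left-hand side equals $[a,[\alpha^k(x),\alpha^k(y)]]$. Now apply the Hom-Jacobi identity \eqref{jacobie} to $(a,\alpha^k(x),\alpha^k(y))$, recalling that $|\alpha^k(x)|=|x|$, $|\alpha^k(y)|=|y|$, and $\alpha(a)=a$:
\[
(-1)^{|a||y|}[a,[\alpha^k(x),\alpha^k(y)]]+(-1)^{|y||x|}[\alpha^{k+1}(y),[a,\alpha^k(x)]]+(-1)^{|x||a|}[\alpha^{k+1}(x),[\alpha^k(y),a]]=0.
\]

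The remaining step is pure sign bookkeeping. I would multiply the identity by $(-1)^{|a||y|}$ and then rewrite the last two brackets using graded antisymmetry: $[\alpha^{k+1}(y),[a,\alpha^k(x)]]=-(-1)^{|y|(|a|+|x|)}[[a,\alpha^k(x)],\alpha^{k+1}(y)]$ and $[\alpha^k(y),a]=-(-1)^{|a||y|}[a,\alpha^k(y)]$. The accumulated exponents collapse to $2|a||y|+2|x||y|$ in the first term and $2|a||y|+|x||a|$ in the second, reducing modulo $2$ to $0$ and $|x||a|$ respectively. This yields precisely the desired identity.

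The only mild obstacle is verifying that the signs indeed collapse correctly after the antisymmetry manipulations; there is no conceptual difficulty beyond that, since multiplicativity plus $\alpha(a)=a$ ensures that all twistings appearing in the Hom-Jacobi identity match the $\alpha^{k+1}$-derivation pattern.
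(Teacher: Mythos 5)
Your proposal is correct and follows essentially the same route as the paper: verify $\alpha\circ ad_k(a)=ad_k(a)\circ\alpha$ via multiplicativity and $\alpha(a)=a$, then obtain the twisted Leibniz rule by applying the super Hom-Jacobi identity to $(a,\alpha^k(x),\alpha^k(y))$ and rearranging with graded antisymmetry; your sign computation checks out.
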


 \begin{proof}
 Indeed we have $$ ad_{k}(a) \circ \alpha(x)=[a,\alpha^{k+1}(x)]= [\alpha(a),\alpha^{k+1}(x)]=\alpha\Big([a,\alpha^{k}(x)]\Big)=\alpha \circ  ad_{k}(a)(x)            $$
and

\begin{eqnarray*}
  &&ad_{k}(a)\Big([x,y]\Big) =[a,\alpha^{k}([x,y])]\\
 &=&[\alpha(a),[\alpha^{k}(x),\alpha^{k}(y)]] \\
  &=&-(-1)^{|a||y|} \Bigg((-1)^{|x||a|}\Big[\alpha ^{k+1}(x),[\alpha^{k}(y),a]\Big]
  +(-1)^{|y||x|}\Big[\alpha^{k+1}(y)),[a,\alpha^{k}(x)]\Big]\Bigg)\\
  &=&(-1)^{|a||y|} \Bigg((-1)^{|x||a|}  (-1)^{|y||a|}                       \Big[\alpha^{k+1} (x)),[a,\alpha^{k}(y)]\Big]
  +(-1)^{|y||x|}    (-1)^{|y||[a,x]|}       \Big[[a,\alpha^{k}(x)],\alpha^{k+1}(y))\Big]\Bigg)\\
 &=&\Big[[a,\alpha^{k}(x)],\alpha ^{k+1} (y)\Big]+ (-1)^{|x||a|} \Big[ \alpha ^{k+1}(x),[a,\alpha^{k+1}(y)]   \Big] \\
  &=&[ ad_{k}(a)(x), \alpha^{k+1}  (y)\Big]+ (-1)^{|x||a|} [ \alpha^{k+1} (x), ad_{k}(a)(y)].
\end{eqnarray*}
Therefore, $ad_{k}$ is a $\alpha ^{k+1}$-derivation. We denote by $Inn_{\alpha^{k}}(\mathcal{G})$ the set of inner $\alpha^{k}$-derivations, i.e.
$$Inn_{\alpha^{k}}(\mathcal{G})=\{[a,\alpha^{k-1}(.)]/ a\in \mathcal{G}_0\cup\mathcal{G}_1, \ \alpha(a)=a\}.$$ \\
For any $D\in Der(\mathcal{G})$ and $D'\in Der(\mathcal{G}),$ define their commutator $[D,D']$ as usual:
\begin{eqnarray}
 [D,D']  &=&D\circ D'-(-1)^{|D||D'|}D'\circ D.\label{der2}
\end{eqnarray}
\end{proof}
\begin{lem}\label{der1}
For any $D\in (Der_{\alpha^{k}}(\mathcal{G}))_{i}$ and $D'\in( Der_{\alpha^{s}}(\mathcal{G}))_{j},$ where $k+s\geq -1$ and $(i,j)\in \mathbb{Z}_2^2$, we have
$$[D,D']\in (Der_{\alpha^{k+s}}(\mathcal{G}))_{|D|+|D'|}.$$
\end{lem}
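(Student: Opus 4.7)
The plan is to verify three things about $[D,D']$: it has parity $|D|+|D'|$, it commutes with $\alpha$, and it satisfies the $\alpha^{k+s}$-twisted graded Leibniz rule.

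First I would dispose of the easy items. Since $D \in (\End(\mathcal{G}))_i$ and $D'\in (\End(\mathcal{G}))_j$, both compositions $D\circ D'$ and $D'\circ D$ are homogeneous of parity $i+j$, so $[D,D']\in (\End(\mathcal{G}))_{i+j}$. For compatibility with $\alpha$, iterating $\alpha\circ D = D\circ \alpha$ gives $\alpha^n\circ D = D\circ \alpha^n$ for all $n\ge 0$, and similarly for $D'$; hence
\begin{equation*}
\alpha\circ (D\circ D') = D\circ \alpha\circ D' = D\circ D'\circ \alpha,
\end{equation*}
and the analogous identity for $D'\circ D$ yields $\alpha\circ [D,D'] = [D,D']\circ \alpha$.

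The substantive step is the Leibniz identity. I would compute $D\bigl(D'([x,y])\bigr)$ by first applying the $\alpha^s$-derivation property of $D'$ and then the $\alpha^k$-derivation property of $D$ to each resulting bracket. Since $|D'(x)| = |x|+j$ and $|\alpha^s(x)| = |x|$, expanding produces four terms; using $D\circ \alpha^s = \alpha^s\circ D$ and $D'\circ \alpha^k = \alpha^k\circ D'$ and $\alpha^k\circ \alpha^s = \alpha^{k+s}$, they take the schematic form
\begin{align*}
D\bigl(D'([x,y])\bigr) &= [DD'(x),\alpha^{k+s}(y)] + (-1)^{(|x|+j)i}[\alpha^{k+s}(x), D D'(y)] \\
&\quad + (-1)^{j|x|}\bigl([D'(x), D\alpha^s(y)] \text{ (under $\alpha^k$)}\bigr) + (-1)^{j|x|+|x|i}\bigl(\cdots\bigr),
\end{align*}
with appropriate $\alpha$-twists absorbed. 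Doing the symmetric computation for $D'\bigl(D([x,y])\bigr)$, forming the graded commutator $D\circ D' - (-1)^{ij} D'\circ D$, and regrouping, the two ``mixed'' cross terms (one from $DD'$, one from $D'D$) must pair up and cancel. This is where I expect the bookkeeping to be delicate: the cancellation requires that the sign $(-1)^{j|x|}\cdot(-1)^{(|x|+j)i}$ coming from $DD'$ matches $(-1)^{ij}\cdot(-1)^{i|x|}\cdot(-1)^{(|x|+i)j}$ coming from $D'D$, and similarly for the companion pair. A short check using $(-1)^{2|x|j} = 1$ confirms these signs agree, so the cross terms cancel and the two ``pure'' terms assemble into
\begin{equation*}
[[D,D'](x), \alpha^{k+s}(y)] + (-1)^{(i+j)|x|}[\alpha^{k+s}(x), [D,D'](y)],
\end{equation*}
as required.

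The main obstacle, as flagged above, is purely combinatorial: tracking the Koszul signs generated by moving $D$ past $D'(x)$ and $D'$ past $D(x)$, and verifying that the two cross contributions cancel under the graded commutator. Once the signs are organized (by writing each sign as a sum of products of $|x|,|y|,i,j$ modulo $2$), the cancellation and the final identification are straightforward.
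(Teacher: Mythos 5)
Your strategy is exactly the paper's: expand $D\circ D'([x,y])$ and $D'\circ D([x,y])$ by applying the two twisted Leibniz rules in succession, use $D\circ\alpha=\alpha\circ D$ and $D'\circ\alpha=\alpha\circ D'$ to rewrite the mixed terms, and observe that they cancel in the graded commutator while the pure terms assemble into the $\alpha^{k+s}$-Leibniz identity; the parity and $\alpha$-compatibility checks are handled the same way in the paper, so in outline the two proofs coincide.

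The one thing you must repair is the sign bookkeeping as literally written. In the expansion of $D\circ D'([x,y])$ the cross term $[\alpha^{k}D'(x),D\alpha^{s}(y)]$ carries the sign $(-1)^{(|x|+j)i}$ and the pure term $[\alpha^{k+s}(x),DD'(y)]$ carries $(-1)^{|x|(i+j)}$ (your schematic display attaches these the other way around), while the other cross term $[D\alpha^{s}(x),\alpha^{k}D'(y)]$ carries $(-1)^{|x|j}$. The cancellation is then a \emph{pairwise} matching: $(-1)^{(|x|+j)i}=(-1)^{ij}(-1)^{|x|i}$ kills the first cross term against the corresponding term of $(-1)^{ij}D'\circ D([x,y])$, and $(-1)^{|x|j}=(-1)^{ij}(-1)^{(|x|+i)j}$ kills the second. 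The single identity you propose to check, $(-1)^{j|x|}(-1)^{(|x|+j)i}=(-1)^{ij}(-1)^{i|x|}(-1)^{(|x|+i)j}$, is false when $i=j=1$ (the two sides differ by $(-1)^{ij}$, because the prefactor $(-1)^{ij}$ must be counted once for each of the two cross terms of $D'\circ D$, not once in total), and in any case an equality of products of signs would not by itself give term-by-term cancellation. With the pairwise identities above, which do hold, your argument goes through and is the same computation as in the paper; the $\alpha$-compatibility of $[D,D']$ is verified there exactly as you do it.
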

\begin{proof}
For any $x,y \in \mathcal{G}, $ we have
\begin{eqnarray*}
\Big[D,D'\Big]([x,y])&=&D\circ D'\Big([x,y] \Big)-(-1)^{|D||D'|}D' \circ D\Big([x,y] \Big)\\
&=&D\Big([D'(x),\alpha^{s}(y)]+(-1)^{|x||D'|}[\alpha^{s}(x),D'(y)]\Big)\\
&&-\ (-1)^{|D||D'|}D'\Big([D(x),\alpha^{k}(y)]+(-1)^{|x||D|}[\alpha^{k}(x),D(y)]\Big)\\
&=&[DD'(x),\alpha^{k+s}(y)]+(-1)^{|D||D'(x)|}[\alpha^{k} D'(x),D\alpha^{s}(y)]\\
&&+(-1)^{|x||D'|}\Big([D\alpha^{s}(x),\alpha^{k}  D'(y)]+(-1)^{|x||D|}[\alpha^{k+s}(x),DD'(y)]\Big)\\
&&-(-1)^{|D||D'|}\Big([D'D(x), \alpha^{k+s}(y)]+(-1)^{|D'||D(x)|}[\alpha^{s} D(x),D'\alpha^{k}(y)]\Big)\\
&&-(-1)^{|D||D'|}(-1)^{|x||D|}\Big([D' \alpha^{k}(x), \alpha^s D(y)]+(-1)^{|x||D'|}[ \alpha^{k+s}(x),D'D(y)]\Big).
\end{eqnarray*}
Since $D$ and  $D'$ \   satisfy $D\circ\alpha=\alpha \circ D$ and $D'\circ\alpha=\alpha \circ D',$\  we have
\begin{eqnarray*}
\Big[D,D'\Big]([x,y])
&=&\Big[DD'(x)-(-1)^{|D||D'|}D'D(x),\alpha^{k+s}(y)\Big]\\
&&+ \ (-1)^{(|x||D'|}(-1)^{(|x||D|}\Big[ \alpha^{k+s}(x),DD'(y)-(-1)^{|D||D'|}D'D(y)\Big]\\
&=&\Big[[D,D'](x),\alpha^{k+s}(y)\Big]
+(-1)^{|[D,D']||x|}\Big[\alpha^{k+s}(x),[D,D'](y)\Big].\\
\end{eqnarray*}
It is easy to verify that $\alpha \circ [D,D']=[D,D']\circ\alpha.$ 
Which leads to  $[D,D']\in Der_{\alpha^{k+s}}(\mathcal{G}).$
\end{proof}
\begin{rem}
Obviously, we have
$$Der_{\alpha^{-1}}=\{D \in End(\mathcal{G}):\  D\circ\alpha =\alpha \circ D, D([x,y])=0,\ \forall x,y\ \in \mathcal{G}\}.$$
Thus for any $D,\ D' \in Der_{\alpha^{-1}}(\mathcal{G}),$ we have $[D,D']\in Der_{\alpha^{-1}}(\mathcal{G}).$
\end{rem}

By Lemma \ref{der1}, obviously we have
\begin{prop}
With the above notations, $Der (\mathcal{G})$ is a Lie superalgebra, in which the bracket is given by \eqref{der2}.
\end{prop}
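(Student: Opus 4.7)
The plan is to deduce this proposition directly from Lemma~\ref{der1} together with the standard fact that the graded commutator endows the associative superalgebra $End(\mathcal{G})$ with the structure of a Lie superalgebra. The point is that all the nontrivial work — showing that the bracket of two derivations is again a derivation, with the correct $\alpha$-power index and the correct parity — has already been carried out. What remains is essentially bookkeeping.

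First I would observe that $Der(\mathcal{G}) = \oplus_{k\geq -1} Der_{\alpha^k}(\mathcal{G})$ inherits a $\mathbb{Z}_2$-grading from the decomposition $Der_{\alpha^k}(\mathcal{G}) = (Der_{\alpha^k}(\mathcal{G}))_0 \oplus (Der_{\alpha^k}(\mathcal{G}))_1$, namely
\begin{equation*}
Der(\mathcal{G})_i \;=\; \bigoplus_{k\geq -1} (Der_{\alpha^k}(\mathcal{G}))_i, \qquad i\in\mathbb{Z}_2.
\end{equation*}
Next, closure of $Der(\mathcal{G})$ under the graded commutator \eqref{der2} is precisely Lemma~\ref{der1}: for homogeneous $D\in (Der_{\alpha^k}(\mathcal{G}))_i$ and $D'\in (Der_{\alpha^s}(\mathcal{G}))_j$ with $k+s\geq -1$, we have $[D,D']\in (Der_{\alpha^{k+s}}(\mathcal{G}))_{i+j}\subset Der(\mathcal{G})$.

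It then remains to verify the two axioms of a Lie superalgebra on $Der(\mathcal{G})$. Skew-supersymmetry
\begin{equation*}
[D,D'] \;=\; -(-1)^{|D||D'|}[D',D]
\end{equation*}
is immediate from the definition \eqref{der2}. For the graded Jacobi identity
\begin{equation*}
(-1)^{|D||D''|}[D,[D',D'']] + (-1)^{|D'||D|}[D',[D'',D]] + (-1)^{|D''||D'|}[D'',[D,D']] = 0,
\end{equation*}
I would simply expand each of the three terms using \eqref{der2}, obtaining six compositions of the form $\pm D\circ D'\circ D''$ with appropriate sign factors, and observe the pairwise cancellation that holds already in the ambient graded associative algebra $End(\mathcal{G})$.

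There is no real obstacle: the content of the proposition lies in Lemma~\ref{der1}, while the axioms themselves are a formal consequence of the fact that the graded commutator on any $\mathbb{Z}_2$-graded associative algebra defines a Lie superalgebra, a property inherited by any graded subspace closed under the bracket. Hence $Der(\mathcal{G})$ is a sub-Lie-superalgebra of $(End(\mathcal{G}),[\cdot,\cdot])$.
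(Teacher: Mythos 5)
Your proposal is correct and takes essentially the same route as the paper, whose entire argument is "By Lemma \ref{der1}, obviously we have" the proposition: closure is Lemma \ref{der1}, and skew-supersymmetry plus the graded Jacobi identity are formal consequences of the graded commutator on $End(\mathcal{G})$. One small caveat: your closure statement only invokes the lemma for $k+s\geq -1$, so the pair $D,D'\in Der_{\alpha^{-1}}(\mathcal{G})$ (where $k+s=-2$) is not covered; the paper disposes of this case in the remark preceding the proposition, observing that $Der_{\alpha^{-1}}(\mathcal{G})$ consists of maps commuting with $\alpha$ and vanishing on all brackets, hence is itself closed under the commutator \eqref{der2}.
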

%%%%%%%%%%%%%%%%%%%%%%%%%%%%%%%%%%%%%%%%%%%%%%%%%%%%%%%%%%%%%%%%%%%%%%%%%%%%%%%%%%%%%%%%%%%%%%%%%%%%%%%%%%%%%%%%%%%%%%%%%%%%%%%%%%%%%%%%%%%%%%%%%
\begin{prop}
If we consider on $Der(\mathcal{G})$ the endomorphism $\widetilde{\alpha}$ defined by $\widetilde{\alpha}(D)=\alpha\circ D,$ then $(Der({\mathcal{G}}),[.,.],\widetilde{\alpha})$ is a  Hom-Lie superalgebra where  $[.,.]$ is given by \eqref{der2}.
\end{prop}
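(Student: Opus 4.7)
The plan is to verify the three defining conditions of a Hom-Lie superalgebra for the triple $(Der(\mathcal{G}),[\cdot,\cdot],\widetilde{\alpha})$: that $\widetilde{\alpha}$ is an even endomorphism of $Der(\mathcal{G})$, that $[\cdot,\cdot]$ is graded skew-symmetric, and that the graded Hom-Jacobi identity \eqref{jacobie} holds. The Lie superalgebra structure on $Der(\mathcal{G})$ from the previous proposition already gives me two of these essentially for free, so the real work is to (i) confirm $\widetilde{\alpha}$ is well-defined on $Der(\mathcal{G})$ and (ii) upgrade the ordinary Jacobi identity to the twisted Hom-Jacobi identity.

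First I would check that $\widetilde{\alpha}$ sends $Der(\mathcal{G})$ into itself. Take a homogeneous $D \in (Der_{\alpha^{k}}(\mathcal{G}))_{i}$. Since $\alpha$ is even, $\alpha\circ D$ is again homogeneous of parity $i$, and $\alpha\circ(\alpha\circ D) = \alpha^{2}\circ D = \alpha\circ D\circ\alpha = (\alpha\circ D)\circ\alpha$, so $\widetilde{\alpha}(D)$ commutes with $\alpha$. Using the multiplicativity of $\alpha$ on $\mathcal{G}$ to push $\alpha$ across the bracket on the right-hand side of the derivation identity, one finds that $\widetilde{\alpha}(D)$ satisfies the defining identity of an $\alpha^{k+1}$-derivation, so $\widetilde{\alpha}(D)\in Der_{\alpha^{k+1}}(\mathcal{G})\subseteq Der(\mathcal{G})$. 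Graded skew-symmetry of $[\cdot,\cdot]$ is immediate from the definition \eqref{der2} as a graded commutator of endomorphisms.

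The crux is the Hom-Jacobi identity, and the key observation is that every $D\in Der(\mathcal{G})$ commutes with $\alpha$ by definition. Consequently $\alpha$ slides freely through any composition of elements of $Der(\mathcal{G})$; in particular, expanding $[\widetilde{\alpha}(D_{1}),[D_{2},D_{3}]]$ as a signed sum of triple compositions preceded by $\alpha$ and moving $\alpha$ to the outside yields
\begin{equation*}
[\widetilde{\alpha}(D_{1}),[D_{2},D_{3}]] \;=\; \alpha\circ [D_{1},[D_{2},D_{3}]].
\end{equation*}
Summing cyclically with the usual signs and invoking the ordinary graded Jacobi identity for $Der(\mathcal{G})$ (established in the preceding proposition) gives
\begin{equation*}
\circlearrowleft_{D_{1},D_{2},D_{3}} (-1)^{|D_{1}||D_{3}|}\,[\widetilde{\alpha}(D_{1}),[D_{2},D_{3}]] \;=\; \alpha\circ\Bigl(\circlearrowleft_{D_{1},D_{2},D_{3}} (-1)^{|D_{1}||D_{3}|}\,[D_{1},[D_{2},D_{3}]]\Bigr) \;=\; 0.
\end{equation*}

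I expect the only delicate step to be (i): verifying that $\widetilde{\alpha}(D)$ really lies in $Der(\mathcal{G})$ and not merely in $End(\mathcal{G})$, which genuinely uses the multiplicativity of $\alpha$ on $\mathcal{G}$ (implicit throughout this section). Once this and the commutation $\alpha\circ D = D\circ\alpha$ are in hand, the Hom-Jacobi identity is a purely formal consequence of ordinary Jacobi, so no additional sign juggling beyond the standard graded-commutator bookkeeping is required.
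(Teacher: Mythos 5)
Your proposal is correct, and it supplies exactly the argument the paper leaves implicit: the proposition is stated there without proof, being regarded as an immediate consequence of the preceding results ($Der(\mathcal{G})$ is a Lie superalgebra, and every $D\in Der(\mathcal{G})$ commutes with $\alpha$). Your two steps — using multiplicativity of $\alpha$ to see that $\widetilde{\alpha}(D)=\alpha\circ D$ lands in $Der_{\alpha^{k+1}}(\mathcal{G})$ (trivially in $Der_{\alpha^{-1}}(\mathcal{G})$ when $k=-1$), and sliding $\alpha$ out of $[\widetilde{\alpha}(D_1),[D_2,D_3]]=\alpha\circ[D_1,[D_2,D_3]]$ so that the Hom-Jacobi identity reduces to the already-established graded Jacobi identity — are precisely the intended justification, with the correct sign bookkeeping since $\alpha$ is even.
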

%%%%%%%%%%%%%%%%%%%%%%%%%%%%%%%%%%%%%%%%%%%%%%%%%%%%%%%%%%%%%%%%%%%%%%%%%%%%%%%%%%%%%%%%%%%%%%%%%%%%%%%%%%%%%%%%%%%%%%%%%%%%%%%%%%%%%%%%%%%%%%%%%%%%%%%%%%
Now, we consider extensions of a  Hom-Lie superalgebra $(\mathcal{G},[.,.],\alpha)$ using derivations.
%Let $(\mathcal{G},[.,.],\alpha)$ is a Hom-Lie superalgebra.
 For any $D\in (End(\mathcal{G}))_{i},$ consider the vector space   $\widetilde{\mathcal{G}_{0}}=\mathcal{G}_{0}\oplus \mathbb{R}D,$  $\widetilde{\mathcal{G}_{1}}=\mathcal{G}_{1}$  and $\widetilde{\mathcal{G}}=
\widetilde{\mathcal{G}_{0}}\oplus \widetilde{\mathcal{G}_{1}}.$ Define a skew-symmetric bilinear bracket operation $[.,.]_{D}$ on $\widetilde{\mathcal{G}}$ by
$$[g+\gamma D,h+\lambda D]_{D}=[g,h]-\lambda D(g) +\gamma D(h), \forall g,h \in \mathcal{G}.$$
Define $\alpha_{D}\in End(\mathcal{G}\oplus \mathbb{R}D)$ by $\alpha_{D}(g+\lambda D)=\alpha (g)+\lambda D.$
\begin{prop}
With the above  notations, $(\widetilde{\mathcal{G}},[.,.]_{D},\alpha_{D})$ is a  Hom-Lie superalgebra if and only if $D$ is a derivation of the Hom-Lie superalgebra $(\mathcal{G},[.,.],\alpha).$
%Where $D$ is a derivation of the Hom-Lie superalgebra $(\mathcal{G},[.,.],\alpha),$ we can define $i$ and $\pi$ in the following manner following:
%$$0\longrightarrow (\mathbb{R}D,Id_{\mathbb{R}D})\stackrel{i}{\longrightarrow} (\mathcal{G}\oplus \mathbb{R}D,\alpha_{D})\stackrel{\pi}{\longrightarrow }(\mathcal{G},\alpha) \longrightarrow 0 .$$
%\end{example}
 %defined an extension of $(\mathcal{G},\alpha).$
\end{prop}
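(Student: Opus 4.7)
The plan is to verify the two defining axioms of a Hom-Lie superalgebra for $(\widetilde{\mathcal{G}}, [\cdot,\cdot]_D, \alpha_D)$ and show that they translate exactly into the axioms for $D$ to be a derivation of $(\mathcal{G},[\cdot,\cdot],\alpha)$.

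First, super-skew-symmetry of $[\cdot,\cdot]_D$ is immediate from the definition: on $\mathcal{G}\times\mathcal{G}$ the new bracket coincides with the original, while $[D,D]_D=0$ and $[g,D]_D = -D(g) = -[D,g]_D$ for $g\in\mathcal{G}$. So super-skew-symmetry imposes no condition on $D$.

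For the super Hom-Jacobi identity (\ref{jacobie}), I would split a cyclic triple $(x,y,z)$ in $\widetilde{\mathcal{G}}$ according to how many of its entries lie in $\mathbb{R}D$. If none do, both $\alpha_D$ and $[\cdot,\cdot]_D$ restrict to $\alpha$ and $[\cdot,\cdot]$ on $\mathcal{G}$, so the identity is inherited from $(\mathcal{G},[\cdot,\cdot],\alpha)$. If two or three do, each resulting term either contains a factor $[D,D]_D=0$ or cancels in an obvious way such as $D(D(g))-D(D(g))=0$. The only substantive case is one copy of $D$ together with $g,h\in\mathcal{G}$. Expanding the three cyclic terms using $\alpha_D(D)=D$, $[D,g]_D=D(g)$, $[h,D]_D=-D(h)$, and applying super-skew-symmetry once to move $D(g)$ into the left slot, the Hom-Jacobi identity becomes
\begin{equation*}
D([g,h]) = [D(g),\alpha(h)] + (-1)^{|g||D|}[\alpha(g),D(h)],
\end{equation*}
which is precisely the $\alpha^{1}$-derivation identity for $D$.

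This accounts for the bracket half of the derivation definition. The remaining compatibility $\alpha\circ D = D\circ\alpha$ is naturally picked up by testing the identity $\alpha_D([g,D]_D)=[\alpha_D(g),\alpha_D(D)]_D$, which reads $-\alpha(D(g)) = -D(\alpha(g))$; this is the standard way such a condition enters in the multiplicative setting implicit here. Running both implications through the same case split then gives the equivalence. The main obstacle is the bookkeeping of Koszul signs in the mixed case; once the three cyclic terms are written out with the correct signs, the identification of the Hom-Jacobi relation with the $\alpha^{1}$-derivation axiom is literal, and both directions follow at once.
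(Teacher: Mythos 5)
The paper states this proposition without proof, so there is no authors' argument to compare with; your direct case-by-case verification is clearly the intended route, and the heart of it is right: skew-symmetry costs nothing, triples from $\mathcal{G}$ inherit the Hom-Jacobi identity, triples with two or three copies of $D$ cancel, and the mixed triple $(D,g,h)$, expanded with $\alpha_{D}(D)=D$, $[D,g]_{D}=D(g)$, $[h,D]_{D}=-D(h)$, gives $D([g,h])=[\alpha(g),D(h)]-(-1)^{|g||h|}[\alpha(h),D(g)]$, i.e.\ the $\alpha^{1}$-Leibniz rule. One caveat on the signs: since the paper adjoins $\mathbb{R}D$ to the even part $\widetilde{\mathcal{G}_{0}}$, $D$ enters the Hom-Jacobi identity with parity $0$, and moving $D(g)$ to the left slot actually yields $D([g,h])=(-1)^{|h||D|}[D(g),\alpha(h)]+[\alpha(g),D(h)]$, which coincides with the super-derivation rule $[D(x),\alpha^{1}(y)]+(-1)^{|x||D|}[\alpha^{1}(x),D(y)]$ only when $D$ is even. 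This is harmless, because for an odd endomorphism $D$ the construction itself is inconsistent ($[\cdot,\cdot]_{D}$ is then not an even map, and would violate super-skew-symmetry if $D$ were graded by its endomorphism parity), so the equivalence is really a statement about even $D$ and all these signs are $1$; but you should say this rather than assert that the Koszul bookkeeping produces the sign $(-1)^{|g||D|}$ ``literally'', which it does not under the paper's conventions.

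The substantive gap is your treatment of $\alpha\circ D=D\circ\alpha$. In this paper a Hom-Lie superalgebra is only required to satisfy super-skew-symmetry and the Hom-Jacobi identity; multiplicativity of the twist map is an \emph{additional} property, not an axiom. Hence, in the ``only if'' direction, the hypothesis that $(\widetilde{\mathcal{G}},[\cdot,\cdot]_{D},\alpha_{D})$ is a Hom-Lie superalgebra gives you no right to the identity $\alpha_{D}([g,D]_{D})=[\alpha_{D}(g),\alpha_{D}(D)]_{D}$: that \emph{is} multiplicativity of $\alpha_{D}$, which you would have to assume. So either the proposition is read with ``multiplicative'' on the extension side (then your test is legitimate, but must be stated as an explicit hypothesis, and it also forces $\alpha$ to be multiplicative on $\mathcal{G}$ when tested on pairs from $\mathcal{G}$), or ``derivation'' is read as the Leibniz rule alone and the commutation with $\alpha$ is dropped. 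As written, what the bare Hom-Lie superalgebra axioms on $\widetilde{\mathcal{G}}$ are equivalent to is exactly the $\alpha^{1}$-Leibniz identity for (even) $D$; the condition $\alpha\circ D=D\circ\alpha$ required by the paper's definition of an $\alpha^{k}$-derivation cannot be extracted from them, so calling it ``implicit'' papers over the one point where the stated equivalence actually needs an extra assumption.
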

%%%%%%%%%%%%%%%%%%%%%%%%%%%%%%%%%%%%%%%%%%%%%%%%%%%%%%%%%%%%%%%%%%%%%%%%%%%%%%%%%%%%%%%%%%%%%%%%%%%%%%%%%%%%%%%%%%%%%%%%%%%%%%%%%%%%%%%%%%%%%%%%%%%%%%%%%%%%
%\section{Representation and Cohomology of  Hom-Lie Superalgebra}
\section{ Representations and Cohomology of  Hom-Lie Superalgebras}
In this section we study representations of Hom-Lie Superalgebras and define a family of cohomologies by providing a family of coboundary operator defining cohomology complexes.\\

\subsection{Representations of Hom-Lie superalgebras}
Let $(\mathcal{G},[.,.],\alpha)$ be a Hom-Lie superalgebra and $V=V_{0}\oplus V_{1}$ be an arbitrary vector superspace. Let $\beta\in\mathcal{G}l(V)$ be an arbitrary even linear self-map on $V$  and
\begin{eqnarray*}
[.,.]_{V}&:&\mathcal{G}\times V \rightarrow V \\
&&(g,v)\mapsto [g,v]_{V}
\end{eqnarray*}
%$$\mathcal{G}\times V \rightarrow V $$ $$(g,a)\mapsto [g,a] $$such
 a bilinear map  satisfying $[\mathcal{G}_{i},V_{j}]_{V}\subset V_{i+j}$ where $i,j\in \mathbb{Z}_{2}.$
\begin{defn}
The triple $(V,[.,.]_{V}, \beta)$  is called a Hom-module on the Hom-Lie superalgebra $\mathcal{G}=\mathcal{G}_{0}\oplus \mathcal{G}_{1}$ or  $\mathcal{G}$-Hom-module $V$  if the  even bilinear map $[.,.]_{V}$ satisfies

\begin{eqnarray}
[\alpha(x),\beta(v)]_{V}&=&\beta([x,v]_{V}) \label{rep1}
\end{eqnarray}
 and
\begin{eqnarray}
\left[[x,y],\beta(v)\right]_{V}&=&\left[\alpha(x),[y,v]\right]_{V}-(-1)^{|x||y|}\left[\alpha(y),[x,v]\right]_{V} \label{rep2}
\label{mod}
\end{eqnarray}
 for all homogeneous elements $x, y \in \mathcal{G}$ and $v\in  V .$\\
Hence, we  say that $(V,[.,.]_{V}, \beta)$ is a representation of $\mathcal{G}.$
\end{defn}
\begin{example}
Let $(\mathcal{G},[.,.],\alpha)$ be a Hom-Lie superalgebra  and  $ad:\mathcal{G} \rightarrow End(\mathcal{G})$ be an operator defined for $x\in\mathcal{G} $ by $ad(x)(y)=[x,y].$ Then $(\mathcal{G},ad,\alpha)$ is a representation of $\mathcal{G}$.
\end{example}
\begin{example}
Given a representation $(V,[.,.]_{V},\beta)$ of a Hom-Lie superalgebra $(\mathcal{G},[.,.],\alpha).$\\ Denote   $\widetilde{\mathcal{G}}=\mathcal{G}\oplus V$ and       $\widetilde{\mathcal{G}}_{k}=\mathcal{G}_{k}\oplus V_k.$ If $x\in \mathcal{G}_{i}$ and $v\in V_{i} \ (i\in \mathbb{Z}_{2})$, we denote $|(x,v)|=|x|.$\\
 Define a super skew-symmetric  bracket  $[.,.]_{\widetilde{\mathcal{G}}}:\wedge ^{2}(\mathcal{G}\oplus V)\rightarrow \mathcal{G}\oplus V$ by
$$[(x,u),(y,v)]_{\widetilde{\mathcal{G}}}=\Big([x,y],\ [x,v]_{V}-(-1)^{|x||y|}[y,u]_{V}\Big).$$
Define $\widetilde{\alpha}:\mathcal{G}\oplus V\rightarrow \mathcal{G}\oplus V$ by $\widetilde{\alpha}(x,v)=(\alpha(x),\beta(v)).$\\  Then $(\mathcal{G}\oplus V,[.,.]_{\widetilde{\mathcal{G}}},\widetilde{\alpha})$ is Hom-Lie superalgebra, which we call semi-direct product of the Hom-Lie superalgebra $(\mathcal{G},[.,.],\alpha)$ by $V.$
\end{example}
\begin{rem}
When $\beta$ is the zero-map, we say that the module $V$ is trivial.
\end{rem}

%we denote by m the parity of an element x of f

\subsection{Cohomology of  Hom-Lie superalgebras}

Let   $x_1,\cdots ,x_k$ be $k$ homogeneous elements of $\mathcal{G},$ we denote by $|(x_1,\cdots ,x_k)|=|x_1|+\cdots +|x_k|$ the parity of an element $(x_{1},\dots,x_{k})$ in $\mathcal{G}^k$.\\
%Let $V= V_{0}\oplus V _{1}$  is a module on the super Lie algebra $\mathcal{G}=\mathcal{G}_{0}\oplus \mathcal{G}_{1}$
The set $C^{k}(\mathcal{G},V)$ of $k$-cochains on space $\mathcal{G}$ with values in $V,$ is the set of $k$-linear maps $f:\otimes^{k}\mathcal{G}\rightarrow V$
satisfying
$$f(x_{1},\dots,x_{i},x_{i+1},\dots,x_{k})=-(-1)^{|x_{i}||x_{i+1}|}f(x_{1},\dots,x_{i+1},x_{i},\dots,x_{k})\ \textrm{ for } 1\leq i\leq k-1 .$$
For $k=0$ we have $C^{0}(\mathcal{G},V)=V.$\\
The map $f$ is called even (resp. odd) when  $f(x_{1},\dots,x_{k})\in V_0$ (resp. $f(x_{1},\dots,x_{k})\in V_1$)  for all even (resp odd ) elements $(x_{1},\dots,x_{k})\in \mathcal{G}^k.$\\
A $k$-hom-cochain on $\mathcal{G}$ with values in  $V$ is defined to be a $k$-cochain $f \in C^{k}(\mathcal{G},\ V)$ such that it is compatible with $\alpha $ and $\beta$ in the sense that $\beta \circ f=f \circ \alpha,$ i.e.
$$\beta \circ f (x_{1},\dots ,x_{k})=f(\alpha(x_{1}),\dots ,\alpha(x_{k})).$$
Denote $C^{k}_{\alpha ,\beta}(\mathcal{G},\ V)$ the set of $k$-hom-cochains:
\begin{eqnarray}
C^{k}_{\alpha ,\beta}(\mathcal{G},\ V)&=&
\{f\in C^{k}(\mathcal{G},\ V) :\  \beta  \circ f=f \circ \alpha \}. \label{coh}
\end{eqnarray}
For a given $r$, we define a map  $\delta^{k}_r:C^{k}(\mathcal{G},\ V)\rightarrow C^{k+1}(\mathcal{G},\ V)$ by setting

%La différentielle $$\delta_{q}:C^{q}(\mathcal{G},\ A)\rightarrow C^{q+1}(\mathcal{G},\ A)$$ est définie par la %formule:
%\textbf{CORRECTION: je remlace $k$ par $k+r-1$}
\begin{eqnarray}
&&\delta^{k}_{r}(f)(x_{0},\dots,x_{k})\nonumber \\&&=\sum_{0\leq s < t\leq k}(-1)^{t+|x_{t}|(|x_{s+1}|+\dots+|x_{t-1}|)}
f\Big(\alpha(x_{0}),\dots,\alpha(x_{s-1}),[x_{s},x_{t}],\alpha(x_{s+1}),\dots,\widehat{x_{t}},\dots,\alpha(x_{k})\Big) \nonumber \\
&&+\sum_{s=0}^{k}(-1)^{s+|x_{s}|(|f|+|x_{0}|+\dots+|x_{s-1}|)}\Bigg[\alpha^{k+r-1}(x_{s}), f\Big(x_{0},\dots,\widehat{x_{s}},\dots,x_{k}\Big)\Bigg]_{V},\label{def cob}
\end{eqnarray}
%\textbf{FIN CORRECTION}\\
 where $f\in C^{k}(\mathcal{G},\ V)$,  $|f|$ is the parity of $f$, $\ x_{0},....,x_{k}\in \mathcal{G}$ and $\ \widehat{x_{i}}\ $  means that $x_{i}$ is omitted.\\

In the sequel we assume that the Hom-Lie superalgebra $(\mathcal{G},[.,.],\alpha)$ is multiplicative.
\begin{lem}
With the above notations, for any $f \in C^{k}_{\alpha ,\beta}(\mathcal{G},\ V),$ we have $$\delta^{k}_{r}(f)\circ \alpha = \beta \circ \delta^{k}_{r}(f) .$$ Thus we obtain a well-defined map $$\delta_{r}^{k}:C^{k}_{\alpha ,\beta}(\mathcal{G},\ V)\rightarrow C^{k+1}_{\alpha ,\beta}(\mathcal{G},\ V).$$
\end{lem}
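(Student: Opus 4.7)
The plan is to verify the identity term-by-term on the defining formula (\ref{def cob}). Expand $\beta\bigl(\delta^{k}_{r}(f)(x_{0},\dots,x_{k})\bigr)$ and $\delta^{k}_{r}(f)(\alpha(x_{0}),\dots,\alpha(x_{k}))$ separately, and match the bracket sum with the bracket sum and the representation sum with the representation sum. Since $\alpha$ is even, every sign in (\ref{def cob}) depends only on the parities $|x_{i}|$ and $|f|$, which are unchanged when $x_{i}$ is replaced by $\alpha(x_{i})$; thus the sign prefactors line up automatically, and the only real work is to match the $V$-valued summands.

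First I would handle the inner sum (over pairs $s<t$). Applying $\beta$ to the generic term and using the hom-cochain condition $\beta\circ f=f\circ\alpha$ coming from the definition of $C^{k}_{\alpha,\beta}(\mathcal{G},V)$ in (\ref{coh}), one gets
\begin{eqnarray*}
\beta\,f\bigl(\alpha(x_{0}),\dots,[x_{s},x_{t}],\dots,\alpha(x_{k})\bigr)
&=& f\bigl(\alpha^{2}(x_{0}),\dots,\alpha([x_{s},x_{t}]),\dots,\alpha^{2}(x_{k})\bigr).
\end{eqnarray*}
Then the multiplicativity hypothesis $\alpha([x_{s},x_{t}])=[\alpha(x_{s}),\alpha(x_{t})]$ turns this into exactly the $(s,t)$-summand of $\delta^{k}_{r}(f)(\alpha(x_{0}),\dots,\alpha(x_{k}))$.

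Next I would treat the outer sum (over a single index $s$). Applying $\beta$ to the generic term and using the representation compatibility (\ref{rep1}), which in the form $\beta([g,v]_{V})=[\alpha(g),\beta(v)]_{V}$ is what we need, gives
\begin{eqnarray*}
\beta\Bigl[\alpha^{k+r-1}(x_{s}),f(x_{0},\dots,\widehat{x_{s}},\dots,x_{k})\Bigr]_{V}
&=& \Bigl[\alpha^{k+r}(x_{s}),\beta f(x_{0},\dots,\widehat{x_{s}},\dots,x_{k})\Bigr]_{V}\\
&=& \Bigl[\alpha^{k+r-1}(\alpha(x_{s})),f(\alpha(x_{0}),\dots,\widehat{\alpha(x_{s})},\dots,\alpha(x_{k}))\Bigr]_{V},
\end{eqnarray*}
where the second equality again uses $\beta\circ f=f\circ\alpha$. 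This is precisely the $s$-th summand of the outer part of $\delta^{k}_{r}(f)(\alpha(x_{0}),\dots,\alpha(x_{k}))$.

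Summing over all indices and invoking the parity observation above to conclude that the signs coincide yields $\beta\circ\delta^{k}_{r}(f)=\delta^{k}_{r}(f)\circ\alpha$, which in particular shows that $\delta^{k}_{r}(f)$ lies again in $C^{k+1}_{\alpha,\beta}(\mathcal{G},V)$, so the restricted map is well-defined. The main (and essentially only) obstacle is careful bookkeeping: making sure that the exponent $k+r-1$ shifts correctly to $k+r$ under the transfer of $\beta$ across the bracket, and that the hatted position $\widehat{x_{s}}$ corresponds to $\widehat{\alpha(x_{s})}$ after the substitution $x_{i}\mapsto\alpha(x_{i})$. No new identity beyond multiplicativity and (\ref{rep1}) is needed.
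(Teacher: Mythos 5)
Your proposal is correct and follows essentially the same route as the paper: expand the coboundary formula, use multiplicativity of $\alpha$ together with the hom-cochain condition $\beta\circ f=f\circ\alpha$ on the double sum, and use the compatibility \eqref{rep1} (in the form $\beta([x,v]_{V})=[\alpha(x),\beta(v)]_{V}$) on the single sum, noting that evenness of $\alpha$ leaves all sign factors unchanged. No substantive difference from the paper's argument.
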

\begin{proof}
Let $f\in C^{k}_{\alpha ,\beta}(\mathcal{G},\ V)$ and $(x_{0},\cdots,x_{k}) \in \mathcal{G}^{k+1}.$\\
%\textbf{MODIFICATION}
{\footnotesize
\begin{eqnarray*}
&&\delta^{k}_{r}(f)\circ \alpha (x_{0},\dots,x_{k})\\
&=&\delta^k(f)(\alpha (x_{0}),\dots,\alpha(x_{k}))\\
&=&\sum_{0\leq s < t\leq k}(-1)^{t+|x_{t}|(|f|+|x_{s+1}|+\dots+|x_{t-1}|)}
f\Big(\alpha^{2}(x_{0}),\dots,\alpha^{2}(x_{s-1}),[\alpha(x_{s}),\alpha(x_{t})],\alpha^{2}(x_{s+1}),
\dots,\widehat{x_{t}},\dots,\alpha^{2}(x_{k})\Big)\\
&&+\sum_{s=0}^{k}(-1)^{s+|x_{s}|(|f|+|x_{0}|+\dots+|x_{s-1}|)}\Bigg[\alpha^{k+r}(x_{s}), f\Big(\alpha(x_{0}),\dots,\widehat{x_{s}},\dots,\alpha(x_{k})\Big)\Bigg]_{V}\\
&=&\sum_{0\leq s < t\leq k}(-1)^{t+|x_{t}|(|f|+|x_{s+1}|+\dots+|x_{t-1}|)}
f\circ\alpha\Big(\alpha(x_{0}),\dots,\alpha(x_{s-1}),[x_{s},x_{t}],\alpha(x_{s+1}),
\dots,\widehat{x_{t}},\dots,\alpha(x_{k})\Big)\\
&&+\sum_{s=0}^{k}(-1)^{s+|x_{s}|(|f|+|x_{0}|+\dots+|x_{s-1}|)}\Bigg[\alpha^{k+r}(x_{s}), f\circ\alpha\Big(x_{0},\dots,\widehat{x_{s}},\dots,x_{k}\Big)\Bigg]_{V}\\
&=&\sum_{0\leq s < t\leq k}(-1)^{t+|x_{t}|(|f|+|x_{s+1}|+\dots+|x_{t-1}|)}
\beta \circ f\Big(\alpha(x_{0}),\dots,\alpha(x_{s-1}),[x_{s},x_{t}],\alpha(x_{s+1}),
\dots,\widehat{x_{t}},\dots,\alpha(x_{k})\Big)\\
&&+\sum_{s=0}^{k}(-1)^{s+|x_{s}|(|f|+|x_{0}|+\dots+|x_{s-1}|)} \Bigg[\alpha^{k+r}(x_{s}), \beta \circ f\Big(x_{0},\dots,\widehat{x_{s}},\dots,x_{k}\Big)\Bigg]_{V}\\
&=&\sum_{0\leq s < t\leq k}(-1)^{t+|x_{t}|(|f|+|x_{s+1}|+\dots+|x_{t-1}|)}
\beta \circ f\Big(\alpha(x_{0}),\dots,\alpha(x_{s-1}),[x_{s},x_{t}],\alpha(x_{s+1})
\dots,\widehat{x_{t}},\dots,\alpha(x_{k})\Big)\\
&&+\sum_{s=0}^{k}(-1)^{s+|x_{s}|(|f|+|x_{0}|+\dots+|x_{s-1}|)}\beta \Bigg( \Bigg[\alpha^{k+r-1}(x_{s});  f\Big(x_{0},\dots,\widehat{x_{s}},\dots,x_{k}\Big)\Bigg]_{V}\Bigg)\\
&=&\beta \circ \delta ^{k}_{r}(k)(x_{0},\dots,x_{k}),
%%%%%%%%%%%%%%%%%%%%%%%%%%%%%%%%%
\end{eqnarray*}}
%\textbf{FIN DES MODIFICATIONS}\\
which completes the proof.
\end{proof}

\begin{thm}
Let Let $(\mathcal{G},[.,.],\alpha)$ be a multiplicative Hom-Lie superalgebra and $(V,[.,.]_{V},\beta)$ be a $\mathcal{G}$-Hom-module.
 
For a given integer  $r\geq 1,$  the pair $(\bigoplus_{k>0}C^{k}_{\alpha ,\beta}(\mathcal{G},\ V), \{\delta^{k}_{r}\}_{k>0})$ 
  defines a cohomology complex, that is  $\delta^{k}_{r} \circ \delta_{r}^{k-1}=0.$
\end{thm}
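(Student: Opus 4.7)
The plan is to prove $\delta^k_r \circ \delta^{k-1}_r = 0$ by a direct expansion, organising all resulting summands into four blocks and showing cancellation within (and between) the blocks using the graded Hom-Jacobi identity (\ref{jacobie}), the Hom-module identity (\ref{rep2}), multiplicativity of $\alpha$, and the compatibility $\beta\circ f = f\circ \alpha$. Write $\delta^{k}_r = \delta^{(1)} + \delta^{(2)}$, where $\delta^{(1)}$ is the first double sum (inserting $[x_s,x_t]$ inside $f$) and $\delta^{(2)}$ is the second sum (adjoining $[\alpha^{k+r-1}(x_s),\cdot\,]_V$ outside $f$). Then $\delta^k_r\delta^{k-1}_r f$ breaks into the four blocks $\delta^{(1)}\delta^{(1)}f$, $\delta^{(1)}\delta^{(2)}f$, $\delta^{(2)}\delta^{(1)}f$, $\delta^{(2)}\delta^{(2)}f$.

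For the block $\delta^{(1)}\delta^{(1)}f$, I would split the summation into two sub-cases: (a) the two $\mathcal{G}$-brackets are applied to disjoint pairs of indices $\{s,t\}$ and $\{u,v\}$, and (b) one index is shared, producing a nested bracket $[[x_s,x_t],\alpha(x_u)]$ inside $f$. Case (a) cancels in pairs, by swapping the order in which the two brackets are introduced; the resulting sign change together with the graded antisymmetry of $f$ kills each pair. Case (b) groups the three cyclic terms $[[x_s,x_t],\alpha(x_u)]$, $[[x_s,x_u],\alpha(x_t)]$, $[[x_t,x_u],\alpha(x_s)]$, which, after extracting appropriate signs and using multiplicativity to match the $\alpha$-arguments, give precisely the left-hand side of (\ref{jacobie}) and thus vanish. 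The mixed blocks $\delta^{(1)}\delta^{(2)}f$ and $\delta^{(2)}\delta^{(1)}f$ cancel each other: the former produces terms $[\alpha^{k+r-1}(x_u), f(\alpha(x_0),\ldots,[x_s,x_t],\ldots)]_V$, and the latter produces the same expression after using $\beta\circ f = f\circ\alpha$ to absorb the extra $\alpha$ and multiplicativity to adjust the $\alpha$-power on $x_u$, with opposite sign. Finally the block $\delta^{(2)}\delta^{(2)}f$ produces iterated brackets $[\alpha^{k+r-1}(x_s),[\alpha^{k+r-2}(x_t),f(\ldots)]_V]_V$. Pairing $(s,t)$ with $(t,s)$ and applying (\ref{rep2}) (with the roles played by $\alpha^{k+r-2}(x_s)$, $\alpha^{k+r-2}(x_t)$, and $f(x_0,\ldots,\widehat{x_s},\ldots,\widehat{x_t},\ldots,x_k)$ in the module slot), the sum reduces to $[[\alpha^{k+r-2}(x_s),\alpha^{k+r-2}(x_t)], \beta\circ f(\ldots)]_V$. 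Using multiplicativity and $\beta\circ f = f\circ \alpha$, this matches exactly the remaining, uncancelled pieces of $\delta^{(1)}\delta^{(2)}f + \delta^{(2)}\delta^{(1)}f$ coming from the case where the outside index coincides with one of $s,t$; so everything cancels.

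The main obstacle is the sign bookkeeping. In each block the Koszul signs depend on $|f|$ and the parities of variables skipped or transposed, and the two applications of $\delta$ produce signs built from different index ranges (e.g.\ when passing from a $k$-cochain to a $(k+1)$-cochain the "parity budget" $|x_s|(|f|+|x_0|+\cdots+|x_{s-1}|)$ changes). Verifying that the signs in matched-up terms agree requires a uniform reindexing convention (e.g.\ always list the bracketed pair in increasing order and compare the residual sign after normalising) and careful case analysis depending on whether the "acting" index in a $\delta^{(2)}$-contribution lies before, between, or after the pair $(s,t)$ introduced by a $\delta^{(1)}$-contribution. A secondary, but routine, difficulty is reconciling the $\alpha$-exponents: the outer multiplier is $\alpha^{k+r-1}$ and the inner is $\alpha^{k+r-2}$, so each identification relies on multiplicativity $\alpha([x,y])=[\alpha(x),\alpha(y)]$ to shift powers, and on (\ref{rep1}) to shift a $\beta$ through the module action. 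Once the signs are tabulated and matched, the vanishing is forced by (\ref{jacobie}) and (\ref{rep2}), and the uniformity in $r \geq 1$ is automatic since $r$ enters only through an overall $\alpha$-power that multiplicativity handles transparently.
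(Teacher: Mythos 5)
Your proposal follows essentially the same route as the paper's proof: a direct expansion of $\delta^{k}_{r}\circ\delta^{k-1}_{r}$ into the same families of terms, with the nested-bracket terms killed by the graded Hom-Jacobi identity, the disjoint double-insertion terms cancelling pairwise under swap of the order of insertion, the mixed insertion/action terms cancelling against each other, and the double-action terms combined via the module identity \eqref{rep2} (together with multiplicativity and $\beta\circ f=f\circ\alpha$) against the term where the inserted bracket itself acts --- exactly the cancellations the paper organizes, with the module identity merely applied in the opposite direction. The sign bookkeeping you flag is indeed the only remaining work, and it is the same computation carried out (in expanded form) in the paper.
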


\begin{proof}
%\textbf{MODIFICATON}\\
For any $f\in C^{k-1}_{\alpha ,\beta}(\mathcal{G},\ V)   $  we have
\begin{eqnarray}
 && \delta^{k} _{r} \circ \delta^{k-1} _{r}(f)(x_{0},\dots,x_{k}) \nonumber \\
&&=\sum_{s<t}(-1)^{t+|x_{t}|(|x_{s+1}|+\dots+|x_{t-1}|)}\delta^{k-1}(f) (\alpha(x_{0}),\dots,\alpha(x_{s-1}),[x_{s},x_{t}],\alpha(x_{s+1}),\dots,\widehat{x_{t}},\dots,\alpha(x_{k}))\ \ \ \ \ \ \ \ \label{cob1}\\
&&+\sum_{s=0}^{k}(-1)^{s+|x_{s}|(|f|+|x_{0}|+\dots+|x_{s-1}|)}\Big[\alpha^{k+r-1}(x_{s}),\delta^{k-1}_{r}(f) (x_{0},\dots,\widehat{x_{s}},\dots,x_{k})\Big]_{V}. \label{cob2}
\end{eqnarray}
%\end{footnotesize}
%$$$$$$$$$$$$$$$$GGGGGGGGGGGGGGGGGGGGGGGGG
%$$$$$$$$$$$$$$$$$$$$$DDDDDDDDDDDDDDDDDDDDDDDDDDDDDDDHHHHHHHHHHHHHHHHHHHHHH
From (\ref{cob1}) we have
%{\scriptsize
{\footnotesize
\begin{eqnarray}
&&\delta^{k-1} (f)\left( \alpha(x_{0}),\dots,\alpha(x_{s-1}),[x_{s},x_{t}],\alpha(x_{s+1}),\dots,\widehat{x_{t}},\dots,\alpha(x_{k})  \right)\nonumber \\
%***************************
&=& \sum_{s'<t'<s}(-1)^{t'+|x_{t'}|(|x_{s'+1}|+\dots+|x_{t'-1}|)}f \Big(\alpha^{2}(x_{0}),\dots,\alpha^{2}(x_{s'-1}),[\alpha(x_{s'}),\alpha(x_{t'})],\alpha^{2}(x_{s'+1})
,\dots,\widehat{x_{t'}},\nonumber \\
 &&\dots,\alpha^{2}(x_{s-1}),\alpha([x_{s},x_{t}]),\alpha^{2}(x_{s+1}),\dots,\widehat{x_{t}},\dots,
\alpha^{2}(x_{k})\Big)+\label{cob3}\\
%^^^^^^^^^^^^^^^^^^^^^^^^^^^
&&\sum_{s'<s}(-1)^{s+|x_{s}|(|x_{s'+1}|+\dots +|x_{s-1}|)}f\Big(\alpha^{2}(x_{0}),\dots,\alpha^{2}(x_{s'-1}),
\Big[\alpha(x_{s'-1}),[x_{s},x_{t}]\Big], \alpha^{2}(x_{s'+1})
,\dots,\widehat{x_{s,t}}, \dots,\alpha^{2}(x_{k})\Big)\ \ \ \label{cob4} \\
%%%%%%%%QQQQQQQQQQQQQQQQQQQQQQQQQQQQQQQQQQQQQQQQ1111111111111111111111111111111111111111
&&+\sum_{s'<s<t'<t}(-1)^{t'+|x_{t'}|(|x_{s'+1}|+\dots+|[x_{s},x_{t}]|+ \dots +|x_{t'-1}|)}f\Big(\alpha^{2}(x_{0}),\dots,\alpha^{2}(x_{s'-1}),[\alpha(x_{s'}),\alpha(x_{t'})],\alpha^{2}(x_{s'+1})
,\nonumber \\
&& \dots,\alpha([x_{s},x_{t}]),\dots,\widehat{x_{t'}}, \dots,\alpha^{2}(x_{k})\Big)\label{cob5}\\
%%%%%.....................................................
&&+\sum_{s'<s<t<t'}(-1)^{t'+|x_{t'}|(|x_{s'+1}|+\dots+|x_{s-1}|+|[x_{s},x_{t}]|+|x_{s+1}|+\dots+\widehat{|x_{t}|}+\dots+
|x_{t'-1}|)}f\Big(\alpha^{2}(x_{0}),\dots,\alpha^{2}(x_{s'-1}),
\nonumber\\
&&
[\alpha(x_{s'}),\alpha(x_{t'})],\alpha^{2}(x_{s'+1}),\dots,
 \dots,\alpha([x_{s},x_{t}]),\dots,\widehat{x_{t}}, \dots,\widehat{x_{t'}},\dots,\alpha^{2}(x_{k})\Big)\label{cob6}\\
%00000000000000000000000000000000..........
&&+\sum_{s<t'<t}(-1)^{t'+|x_{t'}|(|x_{s+1}|+\dots+|x_{t'-1}|)}f\Big(\alpha^{2}(x_{0}),\dots,\Big[[x_{s},x_{t}],\alpha
(x_{t'})\Big],\alpha^{2}(x_{s+1}),\dots,\widehat{x_{t,t'}},\dots,\alpha^{2}(x_{k})\Big) \label{cob7}\\
%666666666666666666666666666666666666666666
&&+\sum_{s<t<t'}(-1)^{t'-1+|x_{t'}|(|x_{s+1}|+\dots+\widehat{|x_{t}|}+\dots+|x_{t'-1}|)}f\Big(\alpha^{2}(x_{0}),\dots,
\alpha^{2}(x_{s-1}),\Big[[x_{s},x_{t}],\alpha
(x_{t'})\Big],\alpha^{2}(x_{s+1}),\nonumber \\
&&\dots,\widehat{x_{t,t'}},\dots,\alpha^{2}(x_{k})\Big)\label{cob8}\\
%8888888888888888888888888888888**************************************************
&&+\sum_{s<s'<t'<t}(-1)^{t'+|x_{t'}|(|x_{s'+1}|+\dots+|x_{t'-1}|)}f\Big(\alpha^{2}(x_{0}),\dots,
\alpha^{2}(x_{s-1}),\alpha([x_{s},x_{t}]),\nonumber \\
&& \dots,[\alpha(x_{s'}),\alpha(x_{t'})],\dots,\widehat{x_{t'}},\dots,\widehat{x_{t}},\dots,\alpha^{2}
(x_{k})
\Big)\label{cob9}\\
%^^^^^^^^^^^^^^^^^^^^^^^^^^^^^^^^^^^^^^^^^^^^^^^^^^^^^^^^^^^^^^^^^^^^^^
&&+\sum_{s<s'<t<t'}(-1)^{t'-1+|x_{t'}|(|x_{s'+1}|+\dots+\widehat{x_{t}}+\dots+|x_{t'-1}|)}f\Big(\alpha^{2}(x_{0}),\dots,
\alpha^{2}(x_{s-1}),\alpha([x_{s},x_{t}]),\alpha^{2}(x_{s+1}),\nonumber \\
&&\dots,[\alpha(x_{s'}),\alpha(x_{t'})]\dots,\widehat{x_{t}},\dots, \widehat{x_{t'}},\alpha^{2}(x_{k})\Big)\label{cob10}\\
%###################################################################################
&&+\sum_{t<s'<t'}(-1)^{t'+|x_{t'}|(|x_{s'+1}|+\dots+\widehat{x_{t,t'}}+\dots+|x_{t'-1}|)}f\Big(\alpha^{2}(x_{0}),\dots,
\alpha^{2}(x_{s-1}),\alpha([x_{s},x_{t}]),\alpha^{2}(x_{s+1}),\nonumber \\
&&\dots,\widehat{x_{t}} \dots,[\alpha(x_{s'}),\alpha(x_{t'})],\dots\widehat{x_{t'}},\dots,\alpha^{2}(x_{k})\Big)\label{cob11}\\
%@@@@@@@@@@@@@@@@@@@@@@@@@@@@@@@@@@@@@@@@@@@@@@@@@@@@@@@@@@@@@@@@@@@@@@@@
&&+\sum_{0\leq s' <s}(-1)^{s'+|x_{s'}|(|f|+|x_{0}|+\dots+|x_{s'-1}|)}\Bigg[\alpha^{k+r-1}(x_{s'}),f\Big(\alpha(x_{0})
\dots,\widehat{x_{s'}},\dots,[x_{s},x_{t}],\dots,\widehat{x_{t}},\dots,\alpha(x_{k})\Big)\Bigg]_{V}\label{cob12}\\
%!!!!!!!!!!!!!!!!!!!!!!!!!!!!!!!!!!!!!!!!!!!!!!!!!!!!!!
%@@@@@@@@@@@@@@@@@@@@@@@@@@@@@@@@@@@@@@@@@@@@@@@@@@@@@@@@@@@@@@@@@@@@@@@@
&&+(-1)^{s+|[x_{s},x_{s}]|(|f|+|x_{0}|+\dots+|x_{s-1}|)}\Bigg[\alpha^{k+r-2}([x_{s},x_{t}]),f\Big(\alpha(x_{0})
\dots,\widehat{[x_{s},x_{t}]},\alpha(x_{s+1}),\dots,\widehat{x_{t}},\dots,\alpha(x_{k})\Big)\Bigg]_{V}\label{cob13}
%!!!!!!!!!!!!!!!!!!!!!!!!!!!!!!!!!!!!!!!!!!!!!!!!!!!!!!
%@@@@@@@@@@@@@@@@@@@@@@@@@@@@@@@@@@@@@@@@@@@@@@@@@@@@@@@@@@@@@@@@@@@@@@@@
\end{eqnarray}}
{\footnotesize
\begin{eqnarray}
&&+\sum_{ s <s'<t}(-1)^{s'+|x_{s'}|(|f|+|x_{0}|+\dots+|[x_{s},x_{t}]|+\dots+|x_{s'-1}|)}\Bigg[\alpha^{k+r-1}(x_{s'}),
%\nonumber \\
%&&
f\Big(\alpha(x_{0})
\dots,[x_{s},x_{t}],\dots,\widehat{x_{s',t}},\dots,\alpha(x_{k})\Big)\Bigg]_{V}\ \label{cob14} \\
%!!!!!!!!!!!!!!!!!!!!!!!!!!!!!!!!!!!!!!!!!!!!!!!!!!!!!!
&&+\sum_{ t <s'}(-1)^{s'+|x_{s'}|(|f|+|x_{0}|+\dots+|[x_{s},x_{t}]|+\dots+\widehat{|x_{t}|}+\dots,|x_{s'-1}|)}
\Bigg[\alpha^{k+r-1}(x_{s'}),
%\nonumber\\
%&&
f\Big(\alpha(x_{0})
\dots,[x_{s},x_{t}],\dots,\widehat{x_{t,s'}},\dots,\alpha(x_{k})\Big)\Bigg]_{V}.\label{cob15} \
%!!!!!!!!!!!!!!!!!!!!!!!!!!!!!!!!!!!!!!!!!!!!!!!!!!!!!!
\end{eqnarray}}
%\end{footnotesize}
%(((((((((((((((((((((((((((((((((((((((((((((((((((((((((_______________________________________________
The identity \eqref{cob2}  implies that
\begin{eqnarray}
&&\Bigg[\alpha^{k+r-1}(x_{s}),\delta^{k-1}(f) (x_{0},\dots,\widehat{x_{s}},\dots,x_{k})\Bigg]_{V}\nonumber\\
&=&\Bigg[\alpha^{k+r-1}(x_{s}),\ \nonumber \\
&&\sum_{s'<t'<s}(-1)^{t'+|x_{t'}|(|x_{s'+1}|+\dots +|x_{t'-1}|)}f \Big(\alpha(x_{0}),\dots,\alpha(x_{s'-1}),[x_{s'},x_{t'}],\alpha(x_{s'+1}),\nonumber \\
&&\dots,\widehat{x_{s',t',s}},\alpha(x_{s+1}),\dots,\alpha(x_{k})\Big)\Bigg]_{V}\label{cob18}\\
&&+\Bigg[\alpha^{k+r-1}(x_{s}), \sum_{s'<s<t'}(-1)^{t'-1+|x_{t'}|(|x_{s'+1}|+\dots +\widehat{|x_{s}|}+\dots+|x_{t'-1}|)}f\Big(\alpha(x_{0}),\dots,\alpha(x_{s'-1}),[x_{s'},x_{t'}],\alpha(x_{s'+1}),\nonumber \ \ \\
&&\dots,
\widehat{x_{t,s'}},\dots,\alpha(x_{k})\Big)\Bigg]_{V}\label{cob19} \\
&&+\Bigg[\alpha^{k+r-1}(x_{s}), \sum_{s<s'<t'}(-1)^{t'+|x_{t'}|(|x_{s'+1}|+\dots +\dots+|x_{t'-1}|)}f\Big(\alpha(x_{0}),\dots,\widehat{x_{s}},\dots,\alpha(x_{s'-1}),[x_{s'},x_{t'}],\alpha(x_{s'+1})
,\nonumber \ \ \\
&&\dots,
\widehat{x_{t'}},\dots,\alpha(x_{k})\Big)\Bigg]_{V} \label{cob20}\\
&&+\Bigg[\alpha^{k+r-1}(x_{s}),\sum_{s'=0}^{s-1}(-1)^{s'+|x_{s'}|(|c|+|x_{0}|+\dots +|x_{s'-1}|)}\Big[\alpha^{k+r-2}(s'),f\Big(x_{0},\dots,\widehat{x_{s',s}},\dots,x_{k}\Big)\Big]_{V}\label{cob21}\\
&&+\Bigg[\alpha^{k+r-1}(x_{s}),\sum_{s'=s+1}^{k}(-1)^{s'-1+|x_{s'}|(|f|+|x_{0}|+\dots+\widehat{|x_{s}|},\dots, +|x_{s'-1}|)}\Big[\alpha^{k+r-2}(s'),f\Big(x_{0},\dots,\widehat{x_{s',s}},\dots,x_{k}\Big)\Big]_{V}\Bigg]_{V}\nonumber\\
\label{cob22}
\end{eqnarray}
%___________________________________________________________________________________________********_______

Super-Hom-Jacobi identity leads to  $$\sum_{s<t }(-1)^{t+|x_{t}|(|x_{s+1}|+\dots+|x_{t-1}|)}\Big((\ref{cob4})+(\ref{cob7})+(\ref{cob8})\Big)=0.$$

%calculate the term  (\ref{cob13})

Using (\ref{mod}) and (\ref{coh}), we obtain
{\footnotesize
\begin{eqnarray}
&&(\ref{cob13})
=\Bigg[\alpha^{k+r-2}([x_{s},x_{t}]);\  f\left( \alpha(x_{0}),\dots,\alpha(x_{s-1}),\widehat{\alpha([x_{s},x_{t}]},\alpha(x_{s+1}),\dots,
\widehat{x_{t}},\dots,
\alpha (x_{k})\right) \Bigg]_{V}\nonumber \\
%#######################################################################
%@@@@@@@@@@@@@@@@@@@@@@@@@@@@@@@@@@@@@@@@@@@@@@@@@@
&=&\Bigg[\alpha^{k+r-1}(x_{s}),\Big[\alpha^{k+r-2}(x_{t}), f\Big(x_{0},\dots,\widehat{x_{s,t}},\dots,
 x_{k}\Big)\Big]_{V}\Bigg]_{V}
 %^^^^^^^^^^^^^^^^^^^^^^^^^^^^^^^^^^^^^^^^^^^^^^^^^^^^^^^^^^^^^^^
-\Bigg[\alpha^{k+r-1}(x_{t}),\Big[\alpha^{k+r-2}(x_{s}), f\Big(x_{0},\dots,\widehat{x_{s,t}},\dots,
 x_{k}\Big)\Big]_{V}\Bigg]_{V} \ \ \ \label{cob24}.
\end{eqnarray}}

%\textbf{Fin des MODIFICATIONS}\\
Thus
 \begin{footnotesize}
\begin{eqnarray*}
&&\sum_{s<t}(-1)^{t+|x_{t}|(|x_{s+1}|+\dots+|x_{t-1}|)}(\ref{cob13}) +\sum_{s=0}^{k}(-1)^{s+|x_{s}|(|f|+|x_{0}|+\dots+|x_{s-1}|)}(\ref{cob21})
+\sum_{s=0}^{k}(-1)^{s+|x_{s}|(|f|+|x_{0}|+\dots+|x_{s-1}|)}
(\ref{cob22})
 =0.
\end{eqnarray*}
\end{footnotesize}
%++++++++++++++++++++++++++++++++++++
%++++++++++++++++++++++++++++++++++++++++++++++++++++++++++++++++++++++++++++++++++++++++++++++++++++++++++++++++++++++++

By a simple calculation we get
%Then we use the terms we get (\ref{cob12}),(\ref{cob20}),
\begin{eqnarray*}
&&\sum_{s<t}(-1)^{t+|x_{t}|(|x_{s+1}|+\dots+|x_{t-1}|)}(\ref{cob12})
+ \sum_{s=0}^{k}(-1)^{s+|x_{s}|(|f|+|x_{0}|+\dots+|x_{s-1}|)}(\ref{cob20})
=0,
\end{eqnarray*}
%&UUUUUUUUUUUUUUUUUUUUUUUUUUUUUUUUUUUUUUUUUUUUUUUUUUUUUUUU
%Then we use the terms we get (\ref{cob14}),(\ref{cob19}),
\begin{eqnarray*}
&&\sum_{s<t}(-1)^{t+|x_{t}|(|x_{s+1}|+\dots+|x_{t-1}|)}(\ref{cob14})
+\sum_{s=0}^{k}(-1)^{s+|x_{s}|(|f|+|x_{0}|+\dots+|x_{s-1}|)}(\ref{cob19})
=0,
\end{eqnarray*}
%|||||||||||||||||||||||||||||||||||||||||||||||||||||||||||||||||||||||||||||||||||||||||||||||||||||||||||||||||||||||||
%Then we use the terms we get (\ref{cob15}),(\ref{cob18}),
\begin{eqnarray*}
&&\sum_{s<t}(-1)^{t+|x_{t}|(|x_{s+1}|+\dots+|x_{t-1}|)}
(\ref{cob15})
+\sum_{s=0}^{k}(-1)^{s+|x_{s}|(|f|+|x_{0}|+\dots+|x_{s-1}|)}(\ref{cob18})
%IIIIIIIIIIIIIIIIIIIIIIII
=0,
\end{eqnarray*}
and
\begin{eqnarray*}
&&\sum_{s<t}(-1)^{t+|x_{t}|(|x_{s+1}|+\dots+|x_{t-1}|)}\Big((\ref{cob5})+(\ref{cob10})\Big)\\
&=&\sum_{s<t}(-1)^{t+|x_{t}|(|x_{s+1}|+\dots+|x_{t-1}|)}\sum_{s'<s<t'<t}(-1)^{t'+|x_{t'}|(|x_{s'+1}|
+\dots+|[x_{s},x_{t}]|
+\dots +|x_{t'-1}|)}\\
&&f\Big(\alpha^{2}(x_{0}),\dots,\alpha^{2}(x_{s'-1}),[\alpha(x_{s'}),\alpha(x_{t'})],\alpha^{2}(x_{s'+1})
, \dots,\alpha([x_{s},x_{t}]),\dots,\widehat{x_{t'}}, \dots,\alpha^{2}(x_{k})\Big)\\
&+&\sum_{s<t}(-1)^{t+|x_{t}|(|x_{s+1}|+\dots+|x_{t-1}|)}\sum_{s<s'<t<t'}(-1)^{t'-1+|x_{t'}|(|x_{s'+1}|
+\dots+\widehat{x_{t}}
+\dots+|x_{t'-1}|)}\\
&&f\Big(\alpha^{2}(x_{0}),\dots,
\alpha^{2}(x_{s-1}),\alpha([x_{s},x_{t}]),\alpha^{2}(x_{s+1}),
\dots,\widehat{x_{t,t'}},\dots,[\alpha(x_{s'}),\alpha(x_{t'})]\dots,\alpha^{2}(x_{k})\Big).\\
%###################################################################################
&=&0.
\end{eqnarray*}
Similarly  $\displaystyle\sum_{s<t}(-1)^{t+|x_{t}|(|x_{s+1}|+\dots+|x_{t-1}|)}\Big((\ref{cob3})+(\ref{cob11})\Big)=0 $ and
$$\displaystyle\sum_{s<t}(-1)^{t+|x_{t}|(|x_{s+1}|+\dots+|x_{t-1}|)}\Big((\ref{cob6})+(\ref{cob9})\Big)=0 $$\\
Therefore $\delta^{k}_{r}\circ \delta^{k-1}_{r} =0.$ 
\end{proof}
%\textbf{MODIFICATION,}\\
The previous Theorem shows that we may have infinitely many cohomology complexes.

The corresponding cocycles, coboundaries and cohomology groups are defined as follows. 
\begin{defn}Let $(\mathcal{G},[.,.],\alpha)$ be a Hom-Lie superalgebra and  $(V,[.,.]_{V}, \beta)$ be a Hom-module. We have with respect the $r$-cohomology defined by the coboundary operators
$$\delta_{r}^{k}:C^{k}_{\alpha ,\beta}(\mathcal{G},\ V)\rightarrow C^{k+1}_{\alpha ,\beta}(\mathcal{G},\ V),$$

%\subsection{Cohomologie of  Hom-lie Superalgebra}
\begin{itemize}
\item The  $k$-cocycles space is defined as $Z_{r}^{k}(\mathcal{G},V)=\ker \ \delta^{k}_{r}$. The  even (resp. odd)  $k$-cocycles space  is defined as  
$Z_{r,0}^{k}(\mathcal{G},V)=Z_{r}^{k}(\mathcal{G},V)\cap (C^{k}_{\alpha,\beta}(\mathcal{G},\ V))_0$ (resp. $Z_{r,1}^{k}(\mathcal{G},V)=Z_{r}^{k}(\mathcal{G},V)
\cap (C^{k}_{\alpha,\beta}(\mathcal{G},\ V))_1$).
 \item The $k$-coboundaries space is defined as  $B_{r}^{k}(\mathcal{G},V)=Im \ \delta^{k-1}_{r}$. The even (resp. odd)  $k$-coboundaries space is $B_{r,0}^{k}(\mathcal{G},V)=B_{r}^{k}(\mathcal{G},V)\cap (C^{k}_{\alpha,\beta}(\mathcal{G},\ V))_0$ (resp. $B_{r,1}^{k}(\mathcal{G},V)=B_{r}^{k}(\mathcal{G},V)\cap (C^{k}_{\alpha,\beta}(\mathcal{G},\ V))_0$).
\item The $k^{th}$  cohomology space is the quotient $H_{r}^{k}(\mathcal{G},V)= Z_{r}^{k}(\mathcal{G},V)/ B_{r}^{k}(\mathcal{G},V)$. It decomposes as well as   even and odd  $k^{th}$  cohomology spaces.
  \end{itemize}
 Finally, we denote by $H^{k}_{r}(\mathcal{G},V)=H_{r,0}^{k}(\mathcal{G},V) \oplus H_{r,1}^{k}(\mathcal{G},V)$ the $k^{th}$ $r$-cohomology space and by $\oplus_{k\geq 0}H^{k}_{r}(\mathcal{G},V)$ the $r$-cohomology group of the Hom-Lie superalgebra $\mathcal{G}$ with values in $V$.
 \end{defn}
\begin{rem}
The $Z_{r}^{1}(\mathcal{G},\mathcal{G})$ is the set of $\alpha^{r}$-derivation of $\mathcal{G}$.
\end{rem}
%\textbf{FIN MODIFICATION,}
\begin{example}
In this example we compute the second scalar cohomology group of the Hom-Lie superalgebra  $osp(1,2)_{\lambda}$ constructed in  \cite{AmmarMakhloufJA2010}.

Let $osp(1,2)=A_0\oplus A_1$ be the vector  superspace where $V_0$ is generated by \\
\begin{center}
$H=\left(\begin{array}{ccc}1&0&0\\
0&0&0\\
0&0&-1
\end{array}\right) ,$ \
$X=\left(\begin{array}{ccc}0&0&1\\
0&0&0\\
0&0&0
\end{array}\right), $
$Y=\left(\begin{array}{ccc}0&0&0\\
0&0&0\\
1&0&0
\end{array}\right) ,$
\end{center}
and $V_1$ is generated by:
\begin{center}
$F=\left(\begin{array}{ccc}0&0&0\\
1&0&0\\
0&1&0
\end{array}\right) ,$
$G=\left(\begin{array}{ccc}0&1&0\\
0&0&-1\\
0&0&0
\end{array}\right) .$\end{center}
Let $\lambda \in \mathbb{R}^*, $ we consider the linear map $\alpha_\lambda:osp(1,2)\rightarrow osp(1,2)$ defined by:
$$\alpha_\lambda(X)=\lambda^2X, \ \alpha_\lambda(Y)=\frac{1}{\lambda^2}Y, \ \alpha_\lambda(H)=H,\ \alpha_\lambda(F)=\frac{1}{\lambda}F, \  \alpha_\lambda(G)=\lambda G.$$
We define a superalgebra bracket $[.,.]_\lambda$ with respect to the basis, for $\lambda\neq 0, $ by:
\begin{eqnarray*}
& [H,X]_\lambda=2\lambda^2X,\ [H,Y]_\lambda=-\frac{2}{\lambda^2}Y,\ [X,Y]_\lambda =H,\ [Y,G]_\lambda=\frac{1}{\lambda} F,\  [X,F]_\lambda=\lambda G,  [H,F]_\lambda=-\frac{1}{\lambda}F, 
\\ \  & [H,G]_\lambda=\lambda G,\ [G,F]_\lambda =H,   [G,X]=0, [Y,F]=0, [G,G]_\lambda=-2\lambda^2X,\ [F,F]_\lambda= \frac{2}{\lambda^2}Y.
\end{eqnarray*}
 Then, we have $osp(1,2)_{\lambda}=(osp(1,2),[.,.]_{\lambda},\alpha_\lambda)$ is a Hom-Lie superalgebra.\\

Let  $f\in C_{\alpha,Id_{\mathbb{C}}}^{1}(osp(1,2),\mathbb{C})$. The scalar 2-coboundary is defined according to \eqref{def cob}by
\begin{eqnarray}
\delta^2(f)(x_{0},x_{1},x_{2})=-f([x_{0},x_{1}],\alpha(x_{2}))+(-1)^{|x_{2}||x_{1}|}f([x_{0},x_{2}],\alpha(x_{1}))+f(\alpha(x_{0}),[x_{1},x_{2}]) .\ \ \ \ \, \label{dddd}
\end{eqnarray}
Now, we suppose that $f$ is a $ 2$-cocycle  of $osp(1,2)_{\lambda}$.  Then $f$ satisfies
\begin{eqnarray}
-f([x_{0},x_{1}],\alpha(x_{2}))+(-1)^{|x_{2}||x_{1}|}f([x_{0},x_{2}],\alpha(x_{1}))+f(\alpha(x_{0}),[x_{1},x_{2}])=0 .\label{ddd11}
\end{eqnarray}
By plugging the following 
 triples  
 \begin{eqnarray*}
  &(H,X,F),\ (H,X,Y),\ (H,X,G),\ (H,Y,G),\ (X,Y,F),\ (X,F,G)), (Y,F,G), \\ & \ (H,Y,F),\ (X,Y,G),\ (H,F,G),\ (H,F,F),\ (H,G,G),\ (X,G,G)
  \end{eqnarray*} respectively in  \eqref{ddd11}
 we obtain
\begin{eqnarray*}
 & f(H,G)=f(X,F), \ f(G,X)=0,\ f(H,F)=f(G,Y) , \ f(G,G)=f(X,H),\\ & f(F,F)=f(Y,H)\  f(F,Y)=0,\ f(X,Y)=f(G,F).
\end{eqnarray*}
%We deduce that
%\begin{eqnarray*}
%&&f(a_1H+a_2X+a_3Y+a_4F+a_5G,b_1H+b_2X+b_3Y+b_4F+b_5G)\\
%&=&(a_1b_2-a_2b_1)f(H,X)+( a_{1}b_{5}- a_{5}b_{1}+a_{2}b_{4}-a_{4}b_{2})f(X,F)
%\end{eqnarray*}
So, if we consider the map $g:osp(1,2)\rightarrow \mathbb{R}$ defined  by 
\begin{eqnarray*}
&g(X)=\frac{1}{2\lambda^{2}} f(H,X),   \ g(Y  )=-\frac{\lambda^{2}}{2}f(H,X),\   g(F)=-\lambda f(H,F), \\ & \   g(G)
=\frac{1}{\lambda} f(H,G), \ g(H)=f(X,Y),
\end{eqnarray*}
 we obtain
\begin{eqnarray*}
&& f(a_1H+a_2X+a_3Y+a_4F+a_5G,b_1H+b_2X+b_3Y+b_4F+b_5G)\\
&& \quad =\delta(g)(a_1H+a_2X+a_3Y+a_4F+a_5G,b_1H+b_2X+b_3Y+b_4F+b_5G)
\end{eqnarray*}

Therefore $H^2(osp(1,2)_{\lambda},\mathbb{C})=\{0\}.$

Notice that this result is the same for any $r\geqslant 1$.
%%%%%%%%%%%%%%%%%%%%%%%%%%%%%%%%%%%%%%%%%%%%%%%%%%%%%%%%%%%%%%%%%%%%%%%%%%%%%%%%

\end{example}
%%%%%%%%%%%%%%%%%%%%%%%%%%%%%%%%%%%%%%%%%%%%%%%%%%%%%%%%%%%%%%%%%%%%%%%%%%%%%%%%%%%%%
%\subsection{The trivial representation of Hom-Lie superalgebras}
\section{Extensions of Hom-Lie superalgebras}
The   extension theory of Hom-Lie algebras algebras was stated first in\cite{HLS,LS1}.\\
%\begin{defn}
An extension of a Hom-Lie superalgebra $(\mathcal{G},[.,.],\alpha)$ by Hom-module  $(V,\alpha_{V})$ is an exact sequence

$$0\longrightarrow (V,\alpha_{V})\stackrel{i}{\longrightarrow} (\widetilde{\mathcal{G}},\widetilde{\alpha})\stackrel{\pi}{\longrightarrow }(\mathcal{G},\alpha) \longrightarrow 0 $$ satisfying $\widetilde{\alpha}\ o\  i=i\ o\ \alpha_{V} $ and $\alpha \ o \ \pi = \pi\  o \ \widetilde{\alpha}.$\\
We say that the extension is central if $[\widetilde{\mathcal{G}},i(V)]_{\widetilde{\mathcal{G}}}=0.$\\
Two extensions $$0 \longrightarrow  (V,\alpha_{V})\stackrel{i_{k}}{\longrightarrow}      (\mathcal{G}_{k},\alpha_{k})\stackrel{\pi_{k}}{\longrightarrow } (\mathcal{G},\alpha) \longrightarrow 0 \ \ \ (k=1,2)$$ are equivalent if there is an isomorpism $\varphi:(\mathcal{G}_{1},\alpha_{1})\rightarrow (\mathcal{G}_{2},\alpha_{2})$ such that $\varphi o \ i_{1}= i_{2}$ and $\pi_{2}\ o\ \varphi=\pi_{1}.$

\subsection{Trivial representation of Hom-Lie superalgebras}
Let $V=\mathbb{C}\ (\textrm {or } \mathbb{R})$  and $[.,.]_{V}=0.$ Obviously, $\forall  \beta \in End(\mathbb{C}),\ (\mathcal{G},[.,.]_{V},\beta)$ is a representation of the Hom-Lie superalgebra $(\mathcal{G},[.,.],\alpha)$. This  representation is called   trivial representation of the Hom-Lie superalgebra $(\mathcal{G},[.,.],\alpha).$\\
In the following we consider central extensions of a Hom-Lie superalgebra $(\mathcal{G},[.,.],\alpha)$. We will see that it is controlled by the second cohomologie $H^{2}(\mathcal{G},V).$
Let $\theta \in C^{2}_{\alpha}(\mathcal{G},V),$ we consider the direct sum $\widetilde{\mathcal{G}}=\widetilde{\mathcal{G}_{0}}\oplus \widetilde{\mathcal{G}_1} $ where $\widetilde{\mathcal{G}_{0}}=\mathcal{G}_{0}\oplus \mathbb{C}$ and $\widetilde{\mathcal{G}_{1}} =\mathcal{G}_{1}$ with the following bracket $$ \Big[(x,s), (y,t)\Big]_{\theta}=\Big([x,y],\theta(x,y)\Big)\ \ \ \forall x,y \in \mathcal{G} \     s, t\in \mathbb{C}.$$
Define $ \widetilde{\alpha}:\widetilde{\mathcal{G}} \rightarrow \widetilde{\mathcal{G}}$ by $\widetilde{\alpha}(x,s)=(\alpha(x),s),$
\begin{thm}
The triple $(\widetilde{\mathcal{G}}, [.,.]_{\theta},\widetilde{\alpha})$   is a Hom-Lie superalgebra if and only if $\theta$ is a $2$-cocycle (i.e. $\delta^{2}(\theta)=0$).\\We call the Hom-Lie superalgebra $(\widetilde{\mathcal{G}}, [.,.]_{\theta},\widetilde{\alpha})$ the central extension of $(\mathcal{G},[.,.],\alpha)$ by $\mathbb{C}$.
\end{thm}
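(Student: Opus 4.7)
The plan is to verify the two defining axioms of a Hom-Lie superalgebra for $(\widetilde{\mathcal{G}},[.,.]_\theta,\widetilde{\alpha})$ and show that the only nontrivial condition is precisely the $2$-cocycle condition on $\theta$. First I would handle the super skew-symmetry of $[.,.]_\theta$, which splits into its $\mathcal{G}$-component and its $\mathbb{C}$-component: the former follows directly from the super skew-symmetry of $[.,.]$ on $\mathcal{G}$, and the latter follows from the fact that $\theta\in C^2_{\alpha,\mathrm{Id}_{\mathbb{C}}}(\mathcal{G},\mathbb{C})$ is itself super skew-symmetric by definition of the cochain space. I would also note in passing that $\widetilde{\alpha}$ is an even endomorphism by construction.

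The heart of the proof is the super Hom-Jacobi identity. I would expand
\[
\circlearrowleft_{x,y,z}(-1)^{|x||z|}\bigl[\widetilde{\alpha}(x,s),[(y,t),(z,u)]_\theta\bigr]_\theta
\]
using the definitions of $[.,.]_\theta$ and $\widetilde{\alpha}$, and observe that this sum decomposes into two coordinates in $\widetilde{\mathcal{G}}=\mathcal{G}\oplus\mathbb{C}$. The first coordinate reproduces
\[
\circlearrowleft_{x,y,z}(-1)^{|x||z|}\bigl[\alpha(x),[y,z]\bigr],
\]
which vanishes because $(\mathcal{G},[.,.],\alpha)$ already satisfies the super Hom-Jacobi identity. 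The second coordinate is
\[
\circlearrowleft_{x,y,z}(-1)^{|x||z|}\theta\bigl(\alpha(x),[y,z]\bigr),
\]
and what remains is to identify this expression with $\delta^{2}(\theta)(x,y,z)$ up to an overall sign.

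The key manipulation, and the place where I expect bookkeeping to be most delicate, is rewriting the cyclic sum to match the coboundary formula. Using the super skew-symmetry of $\theta$ and of $[.,.]$, I would convert each term $\theta(\alpha(\cdot),[\cdot,\cdot])$ into the form $\theta([\cdot,\cdot],\alpha(\cdot))$ with the correct Koszul signs. Since the target module is trivial, $[.,.]_V=0$, so the $r$-dependent summands in the definition of $\delta^2_r(\theta)$ all drop out, and the remaining three terms of $\delta^2(\theta)(x,y,z)$ — namely those displayed in \eqref{dddd} — recover exactly the cyclic sum (up to a global factor of $(-1)^{|x||z|}$). Thus the second-coordinate part of the Hom-Jacobi identity vanishes for all $x,y,z$ if and only if $\delta^2(\theta)=0$.

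The main obstacle is tracking the signs in the super setting: one must carefully use $\theta(a,b)=-(-1)^{|a||b|}\theta(b,a)$ together with $[a,b]=-(-1)^{|a||b|}[b,a]$ on each of the three cyclically permuted terms so that all three contributions line up with the signs appearing in $\delta^2$. Once the bijection between the two sums is established, both implications of the iff follow at once, and one concludes that $(\widetilde{\mathcal{G}},[.,.]_\theta,\widetilde{\alpha})$ is a Hom-Lie superalgebra exactly when $\theta\in Z^{2}(\mathcal{G},\mathbb{C})$.
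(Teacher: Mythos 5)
Your proposal is correct and follows essentially the same route as the paper: split the bracket into its $\mathcal{G}$- and $\mathbb{C}$-components, kill the first component of the cyclic sum by the Hom-Jacobi identity of $(\mathcal{G},[.,.],\alpha)$, and identify the surviving scalar component $\circlearrowleft_{x,y,z}(-1)^{|x||z|}\theta(\alpha(x),[y,z])$ with $\delta^{2}(\theta)$ (your sign bookkeeping, giving the cyclic sum as $(-1)^{|x||z|}\delta^{2}(\theta)(x,y,z)$, is in fact spelled out more carefully than in the paper). The only item the paper treats that you pass over is the verification that $\widetilde{\alpha}$ is a morphism for $[.,.]_{\theta}$ via $\theta\circ\alpha=\theta$, but since the definition of a Hom-Lie superalgebra used here does not demand multiplicativity, this is not a gap in your argument.
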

\begin{proof}
$\widetilde{\alpha}$ is a morphism with respect to the bracket $[.,.]_{\theta}$ follows from the fact that $\theta \circ \alpha =\theta.$ More precisely, we have $$\widetilde{\alpha}[(x,s),(y,t)]_{\theta}=(\alpha[x,y],\theta(x,y)).$$
On the other hand, we have $$ [\widetilde{\alpha}(x,s),\widetilde{\alpha}(y,t)]_{\theta}=[(\alpha(x),s),(\alpha(y),t)]_{\theta}=([\alpha(x),\alpha(y)],\theta(\alpha(x),\alpha(y))).$$
Since $\alpha$ is a morphism and $\theta(\alpha(x),\alpha(y))=\theta(x,y),$ we deduce that $\widetilde{\alpha}$ is a morphism.\\
By direct computations, we have
\begin{eqnarray*}
   &&\circlearrowleft_{(x,s),(y,t),(z,m)}(-1)^{|(x,s)||(z,m)|}\Big[\widetilde{\alpha}(x,s),[(y,t),(z,m)]_{\theta}\Big]_{\theta}\\&=&
   \circlearrowleft_{(x,s),(y,t),(z,m)}(-1)^{|x||z|}\Big[(\alpha(x),s),([y,z],\theta(y,z))\Big]_{\theta}\\
   &=& \circlearrowleft_{x,y,z}(-1)^{|x||z|}\Big([\alpha(x),[y,z]],\theta(\alpha(x),[y,z])\Big)\\
   &=& \circlearrowleft_{x,y,z}(-1)^{|x||z|} \Big(0,\theta(\alpha(x),\theta(\alpha(x),[y,z])\Big).
\end{eqnarray*}
Thus, by Hom-Jacobi identity of $\mathcal{G},$ the bracket  $ [.,.]_{\theta}$ satisfies the Hom-Jacobi identity if and only if $$\circlearrowleft_{x,y,z}(-1)^{|x||z|}\theta(\alpha(x),[y,z])=0.$$ That means that $\delta^{2}\theta=0.$

\end{proof}

Finally, remember that $i$ (resp. $\pi$) is an even morphism of Hom-Lie  superalgebra injective (resp. surjective).
%%%%%%%%%%%%%%%%%%%%%%%%%%%%%%%%%%%%%%%%%%%%%%%%%%%%%%%%%%%%%%%%%%%%%%%%%%%%%%%%%%%%%%%%%%%%%%%%%%%%%%%%%%%%%%%%%%%%%%%%%%%
%%%%%%%%%%%%%%%%%%%%%%%%%%%%%%%%%%%%%%%%%%%%%%%%%%%%%%%%%%%%%%%%%%%%%%%%%%%%%%%%%%%%%%%%%%%%%%%%%%%%%%%%%%%%%%%%%%%%%%%%%%%%%%%%%%%%%%%%%%%
\subsection{Cohomology space $H^{2}(\mathcal{G},V)$ and  Central extensions}\label{cohomologie2}
\begin{prop}
Let $(\mathcal{G},[.,.],\alpha)$ be a multiplicative  Hom-Lie superalgebra and $V$ be a $\mathcal{G}$-Hom-module. The second cohomology space $H^{2}(\mathcal{G},V)=Z^{2}(\mathcal{G},V)/ B^{2}(\mathcal{G},V)$ is  in one-to-one correspondence with the set of the equivalence classes of  central extensions of $(\mathcal{G},\alpha)$ by $(V,\beta).$
\end{prop}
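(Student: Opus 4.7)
The plan is to establish the bijection by constructing explicit maps in both directions and then verifying that they descend to mutually inverse maps on equivalence classes.

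First, from cocycles to extensions. Given $\theta\in Z^{2}(\mathcal{G},V)$, form $\widetilde{\mathcal{G}}_{\theta}=\mathcal{G}\oplus V$ (graded in the obvious way) with bracket
$$[(x,u),(y,v)]_{\theta}=\bigl([x,y],\ \theta(x,y)\bigr)$$
and twist $\widetilde{\alpha}(x,v)=(\alpha(x),\beta(v))$, viewing $V$ with the trivial $\mathcal{G}$-action so that the extension is automatically central. The argument in the preceding subsection for $V=\mathbb{C}$ generalizes verbatim: the compatibility $\beta\circ\theta=\theta\circ\alpha^{\otimes 2}$ (coming from $\theta\in C^{2}_{\alpha,\beta}$) makes $\widetilde{\alpha}$ a bracket morphism, while the cocycle identity $\delta^{2}\theta=0$ is exactly the super-Hom-Jacobi identity for $[\cdot,\cdot]_{\theta}$. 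The natural inclusion $i\colon V\hookrightarrow\widetilde{\mathcal{G}}_{\theta}$ and projection $\pi\colon\widetilde{\mathcal{G}}_{\theta}\twoheadrightarrow\mathcal{G}$ then assemble into a central extension $\Phi(\theta)$.

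Second, from extensions to cocycles. Given a central extension $0\to(V,\beta)\xrightarrow{i}(\widetilde{\mathcal{G}},\widetilde{\alpha})\xrightarrow{\pi}(\mathcal{G},\alpha)\to 0$, pick a graded even linear section $\sigma\colon\mathcal{G}\to\widetilde{\mathcal{G}}$ of $\pi$ and define
$$\theta_{\sigma}(x,y)=[\sigma(x),\sigma(y)]_{\widetilde{\mathcal{G}}}-\sigma([x,y]).$$
Applying $\pi$ shows $\theta_{\sigma}$ takes values in $\ker\pi\cong V$. Super-skew-symmetry of $\theta_{\sigma}$ follows from that of $[\cdot,\cdot]_{\widetilde{\mathcal{G}}}$. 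The super-Hom-Jacobi identity on $\widetilde{\mathcal{G}}$, combined with centrality $[\widetilde{\mathcal{G}},i(V)]_{\widetilde{\mathcal{G}}}=0$ (which annihilates all $\theta$-dependent terms sitting in $V$), yields precisely $\delta^{2}\theta_{\sigma}=0$, so $\theta_{\sigma}\in Z^{2}(\mathcal{G},V)$.

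Third, well-definedness on equivalence classes and mutual inverseness. If $\sigma,\sigma'$ are two sections of $\pi$, then $g:=\sigma'-\sigma$ takes values in $i(V)$ and a direct computation gives $\theta_{\sigma'}-\theta_{\sigma}=\delta^{1}g$, so the class $[\theta_{\sigma}]\in H^{2}(\mathcal{G},V)$ is independent of the section. If two central extensions are equivalent via $\varphi$, pulling back a section $\sigma_{2}$ of $\pi_{2}$ to $\sigma_{1}=\varphi^{-1}\circ\sigma_{2}$ produces the same cocycle, so we get a well-defined map $\Psi$ on equivalence classes. Conversely, cohomologous cocycles $\theta'=\theta+\delta^{1}g$ yield equivalent extensions via $\varphi(x,v)=(x,v+g(x))$. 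Composing: for $\Phi(\theta)$ take the canonical section $\sigma(x)=(x,0)$, which recovers $\theta_{\sigma}=\theta$; for an arbitrary extension, choosing any section $\sigma$ and the resulting $\theta_{\sigma}$ makes $(x,v)\mapsto\sigma(x)+i(v)$ an equivalence with $\Phi(\theta_{\sigma})$.

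The main obstacle is verifying that $\theta_{\sigma}$ actually lies in the hom-cochain space $C^{2}_{\alpha,\beta}(\mathcal{G},V)$, that is, $\beta\circ\theta_{\sigma}=\theta_{\sigma}\circ\alpha^{\otimes 2}$. A generic section need not satisfy $\widetilde{\alpha}\circ\sigma=\sigma\circ\alpha$; the discrepancy $\widetilde{\alpha}\circ\sigma-\sigma\circ\alpha$ is an $i(V)$-valued map, and one must either adjust $\sigma$ to a section compatible with the twists (using multiplicativity of $\alpha$ and surjectivity of $\pi$ together with $\widetilde{\alpha}\circ i=i\circ\beta$, $\alpha\circ\pi=\pi\circ\widetilde{\alpha}$) or replace $\theta_{\sigma}$ within its cohomology class by a representative in $C^{2}_{\alpha,\beta}$. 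This compatibility step, together with the analogous check at the coboundary level in the proof that $\varphi(x,v)=(x,v+g(x))$ intertwines $\widetilde{\alpha}$'s, is the only non-routine part of the argument.
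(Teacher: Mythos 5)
Your argument is correct and follows essentially the same route as the paper's proof: a section of $\pi$ yields the cocycle $[\sigma(x),\sigma(y)]_{\widetilde{\mathcal{G}}}-\sigma([x,y])$ with values in $i(V)\cong V$, a cocycle $f$ yields the extension $\mathcal{G}\oplus V$ with bracket $([x,y],f(x,y))$ and twist $(\alpha,\beta)$, and cohomologous cocycles correspond to equivalent extensions via the map $(x,v)\mapsto(x,v-h(x))$, exactly as in the paper. The one step you flag but leave unresolved --- that the cocycle attached to an extension actually lands in $C^{2}_{\alpha,\beta}(\mathcal{G},V)$, i.e.\ that a section can be chosen (or the cocycle adjusted) so that $\widetilde{\alpha}\circ\sigma=\sigma\circ\alpha$ and hence $\beta\circ\theta_{\sigma}=\theta_{\sigma}\circ(\alpha\otimes\alpha)$ --- is not addressed in the paper's own proof either, so relative to the paper your write-up is, if anything, more careful in isolating this compatibility issue.
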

\begin{proof}
Let $$0\longrightarrow (V,\beta)\stackrel{i}{\longrightarrow} (\widetilde{\mathcal{G}},\widetilde{\alpha})\stackrel{\pi}{\longrightarrow }(\mathcal{G},\alpha) \longrightarrow 0,$$ be  a central extension of Hom-Lie superalgebra $(\mathcal{G},\alpha)$ by $(V,\alpha_{V}),$
so there is a  space $H$ such that $\widetilde{\mathcal{G}}=H \oplus i(V).$

%An element $x + w\in \widetilde{\mathcal{G}}=H \oplus i(V)$ is denoted  $(x, w).$\\
The map  $\pi_{/H}:H\rightarrow \mathcal{G}$ (resp $k:V\rightarrow i(V) $) defined by $\pi_{/H}(x)=\pi(x) $ (resp. $k(v)=i(v)$)is bijective, its inverse s (resp. $l$) note.  Considering the map $\varphi:\mathcal{G}\times V \rightarrow \widetilde{\mathcal{G}}$  defined  by
  $ \varphi(x,v)= s(x)+i (  v),$    it is easy to verify that $\varphi$ is a bijective.\\
  Since $\pi$ is homomorphism of Hom-Lie superalgebras then $\pi \Big([s(x),s(y)]_{\widetilde{\mathcal{G}}}-s([x,y])\Big)=0$\\ so $[s(x),s(y)]_{\widetilde{\mathcal{G}}}-s([x,y]\in i(V).$\\
We set $[s(x),s(y)]-s([x,y])=G(x,y)\in i(V)$  then $F(x,y)=l\circ G(x,y) \in V,$  it easy to see  that $F(x,x)=0$ then $F \in C^2(\mathcal{G},V)$ is a $2$-cochain that defines a bracket on $\widetilde{\mathcal{G}}.$ In fact, we can identify as a superspace $L\times V$ and $\widetilde{\mathcal{G}}$ by $\varphi :(x,v)\rightarrow s(x)+i(v)$ where the bracket is $$[s(x)+i(v),s(y)+i(w)]_{\widetilde{\mathcal{G}}}=[s(x),s(y)]_{\widetilde{\mathcal{G}}}=s([x,y])+F(x,y).$$  Viewed  as elements of $\mathcal{G}\times V$ we have   $\Big[(x,v),(y,w)\Big]=\Big([x,y],F(x,y)\Big)$ and the homogeneous elements $(x,v)$ of $\mathcal{G}\times V$ are such that $|x|=|v|$ and we have in this case $|(x,v)|=|x|$.\\
We deduce that for every central extension
$$0\longrightarrow (V,\beta)\stackrel{i}{\longrightarrow} (\widetilde{\mathcal{G}},\widetilde{\alpha})\stackrel{\pi}{\longrightarrow }(\mathcal{G},\alpha) \longrightarrow 0 .$$  One may associate a two cocycle $F\in Z^{2}(\mathcal{G},V)$. Indeed, for $x,y \in \mathcal{G} ,$ if we set $$F(x,y)=l\Big([s(x),s(y)]-s([x,y]\Big)\in V,$$
then, we have $F(x,y)\in V$ and $F$ satisfies the $2$-cocycle conditions.\\

%%%%%%%%%%%%%%%%%%%%%%%%%%%%%%%%%%%%%%%%%+++++++++++++++++++++++++++++++++++++++++++++++
%Let $p$ be the projection of $\widetilde{\alpha}$ on $H.$
%There are four maps $\varphi:H \times H \rightarrow H, $  $\psi:H\times H\rightarrow i(V) ,$ $l:H\rightarrow H$ and $k:i(V)\rightarrow i(V)$ such that $[(x,v),(y,w)]_{\widetilde{\mathcal{G}}}=(\varphi(x,y),\psi(x,y)),$  $l(x)=p\circ\widetilde{\alpha}(x,0),$ and $k(v)=\widetilde{\alpha}(0,v).$
%We note $\varphi(x,y)=[x,y]_{H},$ it is easy to verify that $\widetilde{\mathcal{G}}$ is a Hom-Lie superalgebra if and only if $(H,[.,.]_{H},l)$ is a %Hom-Lie superalgebra and $\psi$ is $\in Z^{2}(H,i(V))$.\\
%In what follows, since $i:V\rightarrow i(V)$ and $\pi:H\rightarrow \mathcal{G}$ are bijective, we confuse $V$ with $i(V)$ and $H$ with $\mathcal{G},$ i.e.(  $i(v)=(0,v)$ , $\pi((x,0))=x $ and  $\widetilde{\alpha}(x,v)=(\alpha(x),\beta (v)$ )
%then $$[(x,v),(y,w)]_{\widetilde{\mathcal{G}}}=([x,y],\psi(x,y)),\ \forall x,y \in \mathcal{G},\  \forall v, w \in V.$$
%we deduce that for every central extension
%$$0\longrightarrow (V,\beta)\stackrel{i}{\longrightarrow} (\widetilde{\mathcal{G}},\widetilde{\alpha})\stackrel{\pi}{\longrightarrow }(\mathcal{G},\alpha) \longrightarrow 0 ,$$  one may associate a two cocycle $F\in Z^{2}(\mathcal{G},V)$. Indeed, for $x,y \in \mathcal{G} ,$ if we set $$F(x,y)=[(x,0),(y,0)]_{\widetilde{\mathcal{G}}}-([x,y],0)\in \widetilde{\mathcal{G}},$$
%Then we have $F(x,y)\in V$ and $F$ satisfies the $2$-cocycle conditions.\\
%%%%%%%%%%%+++++++++++++++++++++++++++++++++++++++++
Conversely, for each $f\in Z^{2}(\mathcal{G},V),$  one can define a central extension
$$0\longrightarrow (V,\beta)\longrightarrow (\mathcal{G}_{f},\alpha_{f})\longrightarrow (\mathcal{G},\alpha) \longrightarrow 0 ,$$
by $$[(x,v),(y,w)]_{f}=([x,y],f(x,y)),$$ where $x, \ y \in \mathcal{G}$ and $v,\ w \in V.$\\
Let $f$ and $g$ be two elements of $ Z^{2}(\mathcal{G},V)$ such that $f-g \in B^{2}(\mathcal{G},V)$ i.e. $(f-g)(x,y)=h([x,y]),$ where $h:\mathcal{G}\rightarrow V $ is  a linear map satisfying $h\circ \alpha=\beta\circ h$. Now we prove that the extensions  defined by $f$ and $g$ are equivalent. Let us define $\Phi:\mathcal{G}_{f}\rightarrow \mathcal{G}_{g}$ by $$ \Phi (x,y)=(x,v-h(x)). $$
It is clear that $\Phi$ is bijective. Let us check that $\Phi$ is a homomorphism of Hom-Lie superalgebras. We have
\begin{eqnarray*}
  [\Phi((x,v)),\Phi(((y,w))]_{g}&=&[(x,v-h(x)),(y,w-h(y))]_{g}\\
  &=&([x,y],g(x,y))\\
  &=&([x,y],f(x,y)-h([x,y]))\\
  &=&\Phi(([x,y],f(x,y)))\\
  &=&\Phi([(x,v),(y,w)]_{f})
\end{eqnarray*}
and
\begin{eqnarray*}
\Phi\circ \widetilde{\alpha}((x,v))&=& \Phi(\alpha(x),\beta(v))\\
&=&(\alpha(x),\beta(v)-h(\alpha(x)))\\
&=&(\alpha(x),\beta(v)-\beta \circ h(x))\\
&=&(\alpha(x),\beta(v- h(x)))\\
&=&\widetilde{\alpha}\circ \Phi (x,v).
\end{eqnarray*}
Next, we show that for $f,g \in Z^{2}(\mathcal{G},V)$ such that the central extensions\\ $0\rightarrow (V,\beta)\rightarrow (\mathcal{G}_{f},\widetilde{\alpha} )\rightarrow (\mathcal{G},\alpha) \rightarrow 0 ,$ and $0\rightarrow (V,\beta)\rightarrow (\mathcal{G}_{g},\widetilde{\alpha} )\rightarrow (\mathcal{G},\alpha) \rightarrow 0 $ are equivalent, we have $f-g\in B^{2}(\mathcal{G},V).$ Let $\Phi$ be a homomorphism of Hom-Lie superalgebras such that
\begin{displaymath}
\xymatrix { 0 \ar[r] & (V,\beta) \ar[d]_{id_{V}}\ar[r]^{i_{1}} & (\mathcal{G}_{f},\widetilde{\alpha} ) \ar[d]_{\Phi} \ar[r]^{\pi_{1}}&(\mathcal{G},\alpha)\ar[r] \ar[d]_{id_{\mathcal{G}}}&0 \\
0\ar[r] & (V,\beta) \ar[r]^{i_{2}}     & (\mathcal{G}_{g},\widetilde{\alpha}         ) \ar[r]^{\pi_{2}} &(\mathcal{G},\alpha)\ar[r]&0}
\end{displaymath}
commutes. We can express $\Phi(x,v)=(x,v-h(x))$ for some linear map $h:\mathcal{G}\rightarrow V.$ Then we have
\begin{eqnarray*}
\Phi([(x,v),(y,w)]_{f})&=&\Phi(([x,y],f(x,y)))\\
&=&([x,y],f(x,y)-h([x,y])),
\end{eqnarray*}
\begin{eqnarray*}
[\Phi((x,v)),\Phi((y,w))]_{g}&=&[(x,v-h(x)),(y,w-h(y))]_{g}\\
&=&([x,y],g(x,y)),
\end{eqnarray*}
and thus $(f-g)(x,y)=h([x,y])$ (i.e. $f-g \in B^{2}(\mathcal{G},V)$), so we have completed the proof.
\end{proof}
%%%%%%%%%%%%%%%%%%%%00000000000000000000000000000000000000000000000000000000000000000000000
\subsection{The adjoint representation of Hom-Lie superalgebras}
In this section we generalize some results of \cite{sheng}.\\
Let $(\mathcal{G},[.,.],\alpha)$ be a multiplicative  Hom-Lie superalgebra. We consider $\mathcal{G}$ as  a representation on itself via the  bracket and with respect to the morphism $\alpha$ .
\begin{defn}
The $\alpha^{s}$-adjoint representation of the  Hom-Lie superalgebra $(\mathcal{G},[.,.],\alpha),$  which we denote by $ad_{s},$ is defined by
 $$ad_{s}(a)(x)=[\alpha^{s}(a),x],\ \forall \ a,\ x \in \mathcal{G}.$$
\end{defn}
\begin{lem}
With the above notations, we have a $(\mathcal{G},ad_{s}(.)(.),\alpha)$ is a representation of the Hom-Lie superalgebra $\mathcal{G}.$

\end{lem}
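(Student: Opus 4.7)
The plan is to verify directly that $(\mathcal{G}, ad_s, \alpha)$ satisfies the two axioms \eqref{rep1} and \eqref{rep2} in the definition of a $\mathcal{G}$-Hom-module, with bracket $[x,v]_V := ad_s(x)(v) = [\alpha^s(x), v]$ and twisting map $\beta := \alpha$. Since $\mathcal{G}$ is multiplicative, $\alpha$ commutes with $\alpha^s$ and with $[\cdot,\cdot]$, so both axioms will ultimately follow from the super skew-symmetry and the Hom-Jacobi identity in $\mathcal{G}$.

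For the first axiom \eqref{rep1}, I would compute
\[
[\alpha(x), \alpha(v)]_V = [\alpha^{s+1}(x), \alpha(v)] = \alpha([\alpha^s(x), v]) = \alpha([x,v]_V),
\]
using multiplicativity and $\alpha \circ \alpha^s = \alpha^s \circ \alpha$. This is a one-liner.

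The substantive step is axiom \eqref{rep2}. Expanding both sides using the definition of $[\cdot,\cdot]_V$ and multiplicativity, the required identity becomes
\[
[[\alpha^s(x), \alpha^s(y)], \alpha(v)] = [\alpha^{s+1}(x), [\alpha^s(y), v]] - (-1)^{|x||y|}[\alpha^{s+1}(y), [\alpha^s(x), v]].
\]
To prove this, I would apply the graded Hom-Jacobi identity \eqref{jacobie} to the triple $(\alpha^s(x), \alpha^s(y), v)$, which yields
\[
(-1)^{|x||v|}[\alpha^{s+1}(x),[\alpha^s(y),v]] + (-1)^{|v||y|}[\alpha(v),[\alpha^s(x),\alpha^s(y)]] + (-1)^{|y||x|}[\alpha^{s+1}(y),[v,\alpha^s(x)]] = 0.
\]
Then I would use super skew-symmetry on the middle and last terms, namely $[\alpha(v), [\alpha^s(x),\alpha^s(y)]] = -(-1)^{|v|(|x|+|y|)}[[\alpha^s(x),\alpha^s(y)],\alpha(v)]$ and $[v,\alpha^s(x)] = -(-1)^{|v||x|}[\alpha^s(x),v]$, collect signs, and simplify the resulting exponent $|v||y| + |v|(|x|+|y|) \equiv |v||x| \pmod 2$. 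Dividing through by the overall sign $(-1)^{|v||x|}$ delivers exactly the desired equality.

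The main obstacle is purely bookkeeping of the $\mathbb{Z}_2$-signs across the three applications of skew-symmetry; no deeper input beyond \eqref{jacobie} and multiplicativity is required. Once the sign reduction $|v||y| + |v|(|x|+|y|) \equiv |v||x|$ is carried out, the statement follows immediately, so the lemma is established.
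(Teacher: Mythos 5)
Your proposal is correct and follows essentially the same route as the paper: axiom \eqref{rep1} is the one-line multiplicativity computation $[\alpha^{s+1}(x),\alpha(v)]=\alpha([\alpha^{s}(x),v])$, and axiom \eqref{rep2} is obtained from multiplicativity ($\alpha^{s}([x,y])=[\alpha^{s}(x),\alpha^{s}(y)]$) together with the graded Hom-Jacobi identity applied to $(\alpha^{s}(x),\alpha^{s}(y),v)$ and super skew-symmetry, with the same sign reduction $|v||y|+|v|(|x|+|y|)\equiv|v||x|$. The only difference is presentational: the paper transforms the left-hand side by a chain of equalities, whereas you rearrange the Jacobi identity toward the stated target.
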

\begin{proof}
The result follows from
\begin{eqnarray*}
 ad_{s}(\alpha(a))( \alpha(x)) &=&  [\alpha^{s+1}(a),\alpha(x)]\\
 &=&\alpha([\alpha^{s}(a),x])\\
 &=&\alpha \circ ad_{s}(a)(x),
\end{eqnarray*}
and
\begin{eqnarray*}
ad_{s}([x,y] )(\alpha (z))&=& [\alpha^{s}([x,y]),\alpha(z)]\\
&=& [[\alpha^{s}(x),\alpha^{s}(y)],\alpha(z)]\\
&=& -(-1)^{|z||[x,y]|}[\alpha(z),[\alpha^{s}(x),\alpha^{s}(y)]]\\
&=&(-1)^{|z||x|}(-1)^{|z||x|}[\alpha^{s+1}(x),[\alpha^{s}(y),z]]+(-1)^{|z||x|}(-1)^{|y||x|}[\alpha^{s+1}(y),[z,\alpha^{s}(x)]]\\
&=&[\alpha^{s+1}(x),[\alpha^{s}(y),z]]-(-1)^{|x||y|}[\alpha^{s+1}(y),[\alpha^{s}(x),z]].
\end{eqnarray*}
\end{proof}
The set of $k$-hom-cochains on $\mathcal{G}$ with coefficients in $\mathcal{G},$ which we denote by $C_{\alpha}^{k}(\mathcal{G};\mathcal{G}),$  is given by
$$C_{\alpha}^{k}(\mathcal{G};\mathcal{G})=\{f\in C^{k}(\mathcal{G};\mathcal{G}):\  f\circ \alpha= \alpha\circ f\}.$$ In particular, the set of $0$-Hom-cochains is given by:$$C_{\alpha}^{0}(\mathcal{G};\mathcal{G})=\{x\in \mathcal{G}:\  \alpha(x)=x\}.$$
\begin{prop}\label{adj}
With respect  to the  $\alpha^{s}$-adjoint representation $ad_{s},$ of the Hom-Lie superalgebra $(\mathcal{G},[.,.],\alpha),$ $D\in C_{\alpha,ad_{s}}^{1}$
is a $1$-cocycle if and only if $D$ is an $\alpha^{s+1}$-derivation of  the Hom-Lie superalgebra $(\mathcal{G},[.,.],\alpha),$ i.e. $D\in Der_{\alpha^{s+1}}(\mathcal{G}).$
\end{prop}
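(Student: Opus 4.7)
The plan is to unwind the 1-cocycle condition $\delta^{1}_{r}(D)(x,y)=0$ directly from formula \eqref{def cob} and compare it, after using graded skew-symmetry, with the definition of an $\alpha^{s+1}$-derivation. The compatibility $D\circ\alpha=\alpha\circ D$ is built into the definition of $C^{1}_{\alpha,ad_{s}}(\mathcal{G},\mathcal{G})$ as well as into the definition of an $\alpha^{s+1}$-derivation, so that half of the equivalence is free; the content of the statement lies in the Leibniz-type identity.

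First, I would specialize \eqref{def cob} to $k=1$. The double sum over $0\leq s<t\leq k$ collapses to the single pair $(s,t)=(0,1)$, contributing $-D([x_{0},x_{1}])$, and the second sum contributes two boundary terms, giving
\begin{equation*}
\delta^{1}_{r}(D)(x_{0},x_{1}) = -D([x_{0},x_{1}]) + (-1)^{|x_{0}||D|}[\alpha^{r}(x_{0}),D(x_{1})]_{V} - (-1)^{|x_{1}|(|D|+|x_{0}|)}[\alpha^{r}(x_{1}),D(x_{0})]_{V}.
\end{equation*}
Next, I would plug in the representation $[a,v]_{V}=ad_{s}(a)(v)=[\alpha^{s}(a),v]$ and use multiplicativity of $\alpha$ to combine powers: $[\alpha^{r}(x),v]_{V}=[\alpha^{s+r}(x),v]$. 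Taking $r=1$ (the case relevant to the statement) turns the cocycle equation into
\begin{equation*}
D([x_{0},x_{1}]) = (-1)^{|x_{0}||D|}[\alpha^{s+1}(x_{0}),D(x_{1})] - (-1)^{|x_{1}|(|D|+|x_{0}|)}[\alpha^{s+1}(x_{1}),D(x_{0})].
\end{equation*}

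The key step is then to apply graded skew-symmetry of $[\cdot,\cdot]$ to the last term: since $|D(x_{0})|=|D|+|x_{0}|$, we get $[\alpha^{s+1}(x_{1}),D(x_{0})] = -(-1)^{|x_{1}|(|D|+|x_{0}|)}[D(x_{0}),\alpha^{s+1}(x_{1})]$, and the two sign factors square to $1$. Substituting back yields exactly
\begin{equation*}
D([x_{0},x_{1}]) = [D(x_{0}),\alpha^{s+1}(x_{1})] + (-1)^{|x_{0}||D|}[\alpha^{s+1}(x_{0}),D(x_{1})],
\end{equation*}
which is the $\alpha^{s+1}$-derivation identity. All the manipulations above are reversible, so the equivalence holds in both directions, and combining this with $D\circ\alpha=\alpha\circ D$ completes the proof.

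The only real obstacle is bookkeeping of the $\mathbb{Z}_{2}$-graded signs; conceptually the argument is a one-line verification. I would therefore present the proof as three displayed equations (the raw coboundary, the substitution of $ad_{s}$, and the final rewriting via skew-symmetry), observing at the end that the computation is reversible.
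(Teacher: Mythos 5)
Your proposal is correct and follows essentially the same route as the paper: write out $\delta^{1}$ from \eqref{def cob}, substitute the $\alpha^{s}$-adjoint action so that $[\alpha^{r}(x),v]_{V}=[\alpha^{s+r}(x),v]$ with $r=1$, and use graded skew-symmetry to turn the last term into $[D(x_{0}),\alpha^{s+1}(x_{1})]$. You are merely more explicit than the paper about the choice $r=1$ and about the compatibility $D\circ\alpha=\alpha\circ D$ being automatic on both sides, which is a welcome clarification rather than a deviation.
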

\begin{proof}
The conclusion follows directly from the definition of the coboundary operator $\delta.$ $D$ is closed if and only if
$$\delta(D)(x,y)=-D([x,y])+(-1)^{|x||D|}[\alpha^{s+1}(x),D(y)]+(-1)^{1+|y|(|D|+|x|)}[\alpha^{s+1}(y),D(x)]=0,$$
so $$D([x,y])=[D(x),\alpha^{s+1}(y)]+(-1)^{|x||D|}[\alpha^{s+1}(x),D(y)],$$ wich implies that $D$ is an $\alpha^{s+1}$-derivation.
\end{proof}
\subsubsection{The $\alpha^{-1}$-adjoint representation $ad_{-1}$}
\begin{prop}
With respect to the $\alpha^{-1}$-adjoint representation $ad_{-1}$, we have
\begin{eqnarray*}
H^{0}(\mathcal{G},\mathcal{G})&=&C^{0}_{\alpha}(\mathcal{G};\mathcal{G})=\{x\in \mathcal{G}:\  \alpha(x)=x\};\\
H^{1}(\mathcal{G},\mathcal{G})&=&Der_{\alpha^{0}}(\mathcal{G}).
\end{eqnarray*}
\end{prop}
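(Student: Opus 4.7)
The plan is to exploit the fact that, under the convention established in Section 2 that $\alpha^{-1}\equiv 0$ (the zero map, not the inverse), the $\alpha^{-1}$-adjoint representation is identically trivial: $ad_{-1}(a)(x)=[\alpha^{-1}(a),x]=[0,x]=0$. Both identifications in the proposition then drop out of this observation combined with Proposition \ref{adj}.

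First I would compute $\delta^{0}$ explicitly. The general coboundary formula \eqref{def cob}, specialized to $k=0$, has an empty first sum, so for $v\in C^{0}_{\alpha}(\mathcal{G},\mathcal{G})$ it reads
\[
\delta^{0}_{r}(v)(x_{0})=(-1)^{|x_{0}||v|}\bigl[\alpha^{r-1}(x_{0}),v\bigr]_{V}
=(-1)^{|x_{0}||v|}\,ad_{-1}\!\bigl(\alpha^{r-1}(x_{0})\bigr)(v)=0,
\]
since $ad_{-1}\equiv 0$. Consequently $\delta^{0}$ is the zero map, which has two immediate consequences: every $0$-hom-cochain is a cocycle, and $B^{1}=\mathrm{Im}(\delta^{0})=0$. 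Because $B^{0}=0$ trivially, this gives
\[
H^{0}(\mathcal{G},\mathcal{G})=Z^{0}(\mathcal{G},\mathcal{G})=C^{0}_{\alpha}(\mathcal{G};\mathcal{G})=\{x\in\mathcal{G}:\alpha(x)=x\},
\]
establishing the first line of the proposition.

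For the second line, I would apply Proposition \ref{adj} with $s=-1$: a $1$-hom-cochain $D$ is a $1$-cocycle with respect to the $\alpha^{s}$-adjoint representation if and only if $D\in Der_{\alpha^{s+1}}(\mathcal{G})$. Thus $Z^{1}(\mathcal{G},\mathcal{G})=Der_{\alpha^{0}}(\mathcal{G})$. Combined with $B^{1}=0$ from the first step,
\[
H^{1}(\mathcal{G},\mathcal{G})=Z^{1}(\mathcal{G},\mathcal{G})/B^{1}(\mathcal{G},\mathcal{G})=Der_{\alpha^{0}}(\mathcal{G}).
\]

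There is essentially no hard step here; the only delicate point is remembering the non-standard convention $\alpha^{-1}=0$, which trivializes the representation and collapses $\delta^{0}$ to zero. Once that is in hand, the identification of $H^{0}$ is direct from the definitions, and the identification of $H^{1}$ is just the $s=-1$ instance of the already-proved Proposition \ref{adj}, with the vanishing of $B^{1}$ automatic.
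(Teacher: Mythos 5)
Your argument is correct and coincides with the paper's own proof: the authors likewise use the convention $\alpha^{-1}=0$ to get $\delta(x)(y)=(-1)^{|y||x|}[\alpha^{-1}(y),x]=0$, conclude every $0$-hom-cochain is closed (so $H^{0}=C^{0}_{\alpha}(\mathcal{G};\mathcal{G})$), observe there are no exact $1$-hom-cochains, and invoke Proposition \ref{adj} with $s=-1$ to identify $Z^{1}$ with $Der_{\alpha^{0}}(\mathcal{G})$. Your explicit remark that $B^{1}=\mathrm{Im}\,\delta^{0}=0$ is just a spelled-out version of the paper's ``no exact $1$-hom-cochain'' step, so no substantive difference.
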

\begin{proof}
For any $0$-hom-cochain $x \in C^{0}_{\alpha}(\mathcal{G};\mathcal{G}),$ we have $\delta(x)(y)=(-1)^{|y||x|}[\alpha^{-1}(y),x]=0,\ \forall y\in \mathcal{G} .$\\
Therefore, any $0$-hom-cochain is closed. Thus, we have $H^{0}(\mathcal{G},\mathcal{G})=C^{0}_{\alpha}(\mathcal{G};\mathcal{G})=\{x\in \mathcal{G}: \ \alpha(x)=x\}.$
Since there is no exact $1$-hom-cochain, by Proposition \ref{adj}, we have $H^{1}(\mathcal{G},\mathcal{G})=Der_{\alpha^{0}}(\mathcal{G}).$
\end{proof}
Let $\omega \in C^{2}_{\alpha}(\mathcal{G};\mathcal{G})$ be an even super-skew-symmetric bilinear operator commuting with $\alpha.$ Considering a $t$-parametrized family of bilinear operations $$[x,y]_{t}=[x,y]+t\omega (x,y).$$
Since $\omega$ commutes with $\alpha,$ $\alpha$ is a morphism with respect to the brackets $[.,.]_{t}$ for every $t.$ If  $(\mathcal{G}[[t]],[.,.]_{t},\alpha)$ is a Hom-Lie superalgebra, we say that $\omega$ generates a deformation of the Hom-Lie superalgebra $(\mathcal{G},[.,.],\alpha).$ The  super Hom-Jacobi identity of $[.,.]_{t}, $,  is  equivalent to
\begin{eqnarray}
\circlearrowleft_{x,y,z}(-1)^{|x||z|}\Bigg(\omega\Big(\alpha(x),[y,z]\Big)+\Big[\alpha(x),[y,z]\Big]\Bigg)&=&0, \label{rep-1}\\
\circlearrowleft_{x,y,z}(-1)^{|x||z|}\omega\Big(\alpha(x),\omega(y,z)\Big)&=&0\label{rep-2}.
\end{eqnarray}
Obviously, (\ref{rep-1}) means that $\omega$ is even $2$-cycle with respect to the $\alpha^{-1}$-adjoint representation $ad_{-1}.$ Furthermore, (\ref{rep-2}) means that $\omega$ must itself defines a Hom-Lie superalgebra structure on $\mathcal{G}.$
%%%%%%%%%%
\subsubsection{The $\alpha^{0}$-adjoint representation $ad_{0}$}

\begin{prop}
With respect  to the $\alpha^{0}$-adjoint representation $ad_{0},$ we have
$$H^{0}(\mathcal{G};\mathcal{G})=\{x\in \mathcal{G}:\  \alpha(x)=x, [x,y]=0, \forall y\in \mathcal{G}\},$$
$$H^{1}(\mathcal{G};\mathcal{G})=Der_{\alpha}(\mathcal{G}):\ Inn_{\alpha}(\mathcal{G}).$$
\end{prop}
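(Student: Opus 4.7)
The plan is to apply the coboundary formula \eqref{def cob} in degrees $k=0$ and $k=1$ and combine it with Proposition~\ref{adj}. Throughout, we work with the $\alpha^0$-adjoint representation on $V=\mathcal{G}$, so that $[a,v]_V = ad_0(a)(v) = [a,v]$ and the twisting endomorphism on the module is $\beta = \alpha$. Consequently, a $0$-cochain is simply an element $x \in \mathcal{G}$ satisfying $\alpha(x) = x$, that is $C^0_{\alpha,\alpha}(\mathcal{G};\mathcal{G}) = \{x \in \mathcal{G} : \alpha(x) = x\}$.

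To obtain $H^0$, I would compute $\delta^0$ directly from \eqref{def cob}. With $k=0$ only the second sum survives (there is no pair $s<t$), and the single term with $s=0$ gives
$$\delta^0(x)(y) = (-1)^{|x||y|}[\alpha^{0}(y), x]_V = (-1)^{|x||y|}[y, x].$$
Super-skew-symmetry of $[\cdot,\cdot]$ then rewrites this as $\delta^0(x)(y) = -[x, y]$, so $\delta^0(x) = 0$ if and only if $[x,y] = 0$ for every $y \in \mathcal{G}$. Since $B^0 = 0$ (there are no $(-1)$-cochains), this immediately yields the claimed formula for $H^0(\mathcal{G};\mathcal{G})$.

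For $H^1$, I would invoke Proposition~\ref{adj} with $s=0$ to identify $Z^1(\mathcal{G};\mathcal{G}) = Der_\alpha(\mathcal{G})$. It then remains to check that $B^1 = Inn_\alpha(\mathcal{G})$. Using the computation above, the image of $\delta^0$ consists exactly of the maps $y \mapsto -[x,y] = -[x,\alpha^{0}(y)]$ as $x$ ranges over $\alpha$-invariant homogeneous elements of $\mathcal{G}$, which, up to the irrelevant overall sign, is precisely the defining description $\{[a, \alpha^{0}(\cdot)] : \alpha(a) = a\}$ of $Inn_\alpha(\mathcal{G}) = Inn_{\alpha^1}(\mathcal{G})$. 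Passing to the quotient yields $H^1(\mathcal{G};\mathcal{G}) = Der_\alpha(\mathcal{G}) / Inn_\alpha(\mathcal{G})$.

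The only real work here is bookkeeping: keeping track of the sign conventions and the identification $[\cdot,\cdot]_V = [\cdot,\cdot]$ when specializing \eqref{def cob} to the $\alpha^0$-adjoint representation. Once this is set up correctly, both claims follow directly from Proposition~\ref{adj} and the definition of $Inn_\alpha$, with no further obstacles.
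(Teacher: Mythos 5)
Your proposal is correct and follows essentially the same route as the paper: compute $\delta^0(x)(y)=\pm[x,y]$ on $\alpha$-invariant $0$-cochains to get $Z^0$ with $B^0=0$, identify $Z^1$ with $Der_\alpha(\mathcal{G})$ (the paper rewrites the $1$-cocycle condition explicitly, which is just Proposition~\ref{adj} with $s=0$ as you cite), and observe that the image of $\delta^0$ is exactly $Inn_\alpha(\mathcal{G})$. Your sign bookkeeping is in fact slightly more careful than the paper's, but the argument is the same.
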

\begin{proof}
For any $0$-hom-cochain
%$x\in C_{\alpha}^{0}(\mathcal{G},\mathcal{G}),$
 we have $d_{0}x(y)=[\alpha^{0}(y),x]=[x,y]. $\\
Therefore, the set of $0$-cycles  $Z^{0}(\mathcal{G},\mathcal{G})$ is given by $Z^{0}(\mathcal{G},\mathcal{G})=\{x\in C_{\alpha}^{0}(\mathcal{G},\mathcal{G}):\ [x,y]=0, \ \forall y\in\mathcal{G}\}.$\\ Since $B^{0}(\mathcal{G},\mathcal{G})=\{0\}$, we deduce that
 $H^{0}(\mathcal{G};\mathcal{G})=\{x\in \mathcal{G}:\ \alpha(x)=x, [x,y]=0, \forall y\in \mathcal{G}\}.$\\
 For any $f\in C_{\alpha}^{1}(\mathcal{G},\mathcal{G}),$ we have $$\delta(f)(x,y)=-f([x,y])+(-1)^{|x||f|}[\alpha(x),f(y)]+(-1)^{1+|y|(|f|+|x|)}[\alpha(y),f(x)]$$ so,
 $$\delta(f)(x,y)=-f([x,y])+[f(x),\alpha(y)]+(-1)^{|x||f|}[\alpha(x),f(y)].$$ Therefore, The set of
$1$-cocycle $Z^{0}(\mathcal{G},\mathcal{G})$ is exactly the set of $\alpha$-derivation $Der_{\alpha}.$\\
Furthermore, it is obvious that any exact $1$-coboundary is of the form  of $[x,.]$ for some $x\in C_{\alpha}^{0}(\mathcal{G};\mathcal{G}).$
Therfore, we have $B^{1}(\mathcal{G},\mathcal{G})=Inn_{\alpha}(\mathcal{G}).$ Wich implies that $H^{1}(\mathcal{G};\mathcal{G})=Der_{\alpha}(\mathcal{G})/Inn_{\alpha}(\mathcal{G}).$
\end{proof}
%%%%%%%%%%%%%%%%%%%%%%%%%%%%%%%%%%%%%%%%%%%%%%%%%%%%%%%%%%%%%%%%%%%%%%%%%%%%%%%%%%%%%
%\subsubsection{The trivial representation of Hom-Lie superalgebras}
%Let $V=\mathbb{C}\ (\textrm { or } \mathbb{R})$  and $[.,.]_{V}=0.$ Obviously, $\forall  \beta \in End(\mathbb{C}),\ (\mathcal{G},[.,.]_{V},\beta)$ is a representation of the Hom-Lie superalgebra $(\mathcal{G},[.,.],\alpha)$. We call this  representation the trivial representation of the Hom-Lie superalgebra $(\mathcal{G},[.,.],\alpha).$
%%%%%%%%%%%%%%%%%%%%%%%%%%%%%%%%%%%%%%%%%%%%%%%%%%%%%%%%%%%%%%%%%%%%%%%%%%%%%%%%%%%%%%%%%%%%%%%%%

\subsubsection{The coadjoint representation $ad^{*}$}
Let $(\mathcal{G},[.,.],\alpha)$ be a Hom-Lie superalgebra and $(\mathcal{G},[.,.]_{V},\beta)$ b a representation of  $\mathcal{G}.$ Let $V^{*}$ be the dual vector space of $V.$ We define an even bilinear map  $[.,.]_{V^{*}}:\mathcal{G}\times V^{*}\rightarrow V^{*}$
by $$[x,f]_{V^{*}}(v)=-f([x,v]_{V}),\ \forall x\in \mathcal{G}\ f\in V^*, \textrm{ and } v\in V.$$
Let $f\in V^*,\ x,y\ \in \mathcal{G}$ and $v\in V.$  We compute the right hand side of the identity (\ref{rep-2})
\begin{eqnarray*}
   [\alpha(x),[y,f]_{V^*}]_{V^*}(v)-(-1)^{|x||y|}[\alpha(y),[x,v]_{V^*}]_{V^*}&=&- [y,f]_{V^*}([\alpha(x),v]_*)+(-1)^{|x||y|} [x,f]_{V^*}([\alpha(y),v]_{V^*})\\
   &=&f([y,[\alpha(x),v]_{V}]_{V})-(-1)^{|x||y|}f([x,[\alpha(y),v]_{V}]_{V}).
\end{eqnarray*}
On the other hand, we set that the twisted map for $[.,.]_{V^{*}}$ is $\beta^*=^t\beta,$ then the left hand side of the identity (\ref{rep-2}) writes
\begin{eqnarray*}
[[x,y],\beta^*(f)]_{V^*}(v)=-\beta^*(f)([[x,y],v]_{V})=-^t \beta (f)([[x,y],v]_{V})&=&-f \circ \beta ([[x,y],v]_{V}).
\end{eqnarray*}
Therefore, we have the following proposition:
\begin{prop}
Let $(\mathcal{G},[.,.],\alpha)$ be a Hom-Lie superalgebra and $(\mathcal{G},[.,.]_{V},\beta)$ be a representation  of $\mathcal{G}.$ The triple $(V^*,[.,.]_{V^{*}},\beta^*),$ where $[x,f]_{V^{*}}(v)=-f([x,v]_{V}),\ \forall x\in \mathcal{G},\ f\in V^*, \ v\in V$ defines a representation of the Hom-Lie superalgebra $\mathcal{G},[.,.],\alpha$ if only and only if $$[[x,y],\beta(v)]_{V}=(-1)^{|x||y|}[x,[\alpha(y),v]_{V}]_{V}-[y,[\alpha(x),v]_{V}]_{V}.$$
\end{prop}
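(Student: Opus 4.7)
The strategy is to verify the two Hom-module axioms \eqref{rep1} and \eqref{rep2} for the candidate triple $(V^*,[.,.]_{V^*},\beta^*)$ by evaluating each side on an arbitrary $v\in V$, unwinding the defining formulas $[x,f]_{V^*}(v)=-f([x,v]_V)$ and $\beta^*(f)(v)=f(\beta(v))$, and then quantifying over $f\in V^*$. Since the evaluation pairing $V^*\times V\to\K$ is non-degenerate, each axiom becomes an equality inside $V$ that can be compared directly against the hypothesis that $(V,[.,.]_V,\beta)$ is already a representation of $\mathcal{G}$.

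For the twist-compatibility axiom \eqref{rep1} applied to $V^*$, namely $[\alpha(x),\beta^*(f)]_{V^*}=\beta^*([x,f]_{V^*})$, expanding both sides at $v$ produces $-f(\beta([\alpha(x),v]_V))$ on the one side and $-f([x,\beta(v)]_V)$ on the other. An application of rep1 for $V$ identifies these, so this axiom imposes no extra condition and can be dispatched in a line.

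The key step is axiom \eqref{rep2} for $V^*$. The computation displayed immediately before the proposition statement already shows that
\[
\bigl([\alpha(x),[y,f]_{V^*}]_{V^*}-(-1)^{|x||y|}[\alpha(y),[x,f]_{V^*}]_{V^*}\bigr)(v)=f\bigl([y,[\alpha(x),v]_V]_V\bigr)-(-1)^{|x||y|}f\bigl([x,[\alpha(y),v]_V]_V\bigr),
\]
while $[[x,y],\beta^*(f)]_{V^*}(v)=-f(\beta([[x,y],v]_V))=-f([[x,y],\beta(v)]_V)$, the last equality using rep1 to move $\beta$ inside the outer bracket. Equating these two expressions and stripping off $f$ by non-degeneracy of the evaluation pairing yields precisely the identity in the proposition, and since each step of the chain is reversible the equivalence is genuinely an ``iff''.

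The main technical burden is bookkeeping the graded Koszul signs, in particular the factor $(-1)^{|x||f|}$ that enters the defining formula $[x,f]_{V^*}=\pm f\circ[x,\cdot]_V$ when $f$ is odd; one must check that these signs propagate consistently through the double-bracket expansion so that the $f$-free identity extracted at the end matches the statement verbatim. Modulo this bookkeeping, the argument is a mechanical dualization and essentially coincides with the calculation already carried out in the excerpt.
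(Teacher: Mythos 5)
Your overall strategy---evaluate both sides of \eqref{rep2} for $(V^*,[.,.]_{V^*},\beta^*)$ on an arbitrary $v$, unwind the defining formulas, and strip off $f$ by nondegeneracy of the pairing---is exactly the computation the paper carries out in the lines immediately preceding the proposition. The gap is in your two appeals to \eqref{rep1}. Axiom \eqref{rep1} for $V$ reads $\beta([x,v]_V)=[\alpha(x),\beta(v)]_V$: moving $\beta$ inside a bracket costs an extra $\alpha$ on the $\mathcal{G}$-argument. Hence $-f(\beta([[x,y],v]_V))=-f([\alpha([x,y]),\beta(v)]_V)=-f([[\alpha(x),\alpha(y)],\beta(v)]_V)$ in the multiplicative case, and \emph{not} $-f([[x,y],\beta(v)]_V)$; your ``last equality using rep1'' fails unless $\alpha([x,y])=[x,y]$. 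The paper never makes this move: its computation stops at $-f\circ\beta([[x,y],v]_V)$, so the condition it actually extracts is $\beta([[x,y],v]_V)=(-1)^{|x||y|}[x,[\alpha(y),v]_V]_V-[y,[\alpha(x),v]_V]_V$, and the identification of this with the displayed left-hand side $[[x,y],\beta(v)]_V$ is precisely the point your rep1 step purports to justify but does not. Likewise, your claim that \eqref{rep1} for $V^*$ ``imposes no extra condition'' is incorrect: with your own expansion it amounts to $\beta([\alpha(x),v]_V)=[x,\beta(v)]_V$, whereas \eqref{rep1} for $V$ only gives $\beta([\alpha(x),v]_V)=[\alpha^2(x),\beta(v)]_V$; these agree only under additional hypotheses (e.g.\ $\alpha$ involutive), so this axiom cannot be dispatched in a line. (The paper sidesteps this by checking only \eqref{rep2}, which is all the stated equivalence concerns.)

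A minor further point: the Koszul sign $(-1)^{|x||f|}$ you flag does not arise here, because the paper's definition is literally $[x,f]_{V^*}(v)=-f([x,v]_V)$ with no inserted sign; so no sign bookkeeping is needed to reproduce the displayed computation. The substantive defect of your write-up is the misuse of \eqref{rep1} described above, both in the ``automatic'' verification for $V^*$ and in the final conversion of $\beta([[x,y],v]_V)$ into $[[x,y],\beta(v)]_V$.
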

We obtain, the following characterization in the case of adjoint representation.
\begin{cor}
Let $(\mathcal{G},[.,.],\alpha)$ be a Hom-Lie superalgebra and  $(\mathcal{G}, ad ,\alpha)$ be the adjoint representation of $\mathcal{G},$ where $ad:\mathcal{G} \rightarrow End(\mathcal{G}).$ We set $ad^*:\mathcal{G} \rightarrow End(\mathcal{G}^*)$ and $ad^*(x)(f)=-f\circ ad(x).$\\
Then $(\mathcal{G}^*,ad^*,\alpha^*)$ is a representation of $\mathcal{G}$ if and only if$$[[x,y],\alpha(z)]=(-1)^{|x||y|}[x,[\alpha(y),z]]-[y,[\alpha(x),z]], \ \ \ \forall x,\ y,\ z\in \mathcal{G}.$$
\end{cor}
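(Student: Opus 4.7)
The plan is to deduce this statement as an immediate specialization of the preceding proposition. That proposition characterizes when the dual triple $(V^*, [\cdot,\cdot]_{V^*}, \beta^*)$ with $[x,f]_{V^*}(v) = -f([x,v]_V)$ defines a representation of $(\mathcal{G},[\cdot,\cdot],\alpha)$: namely, precisely when
\[
[[x,y],\beta(v)]_{V} = (-1)^{|x||y|}[x,[\alpha(y),v]_{V}]_{V} - [y,[\alpha(x),v]_{V}]_{V}
\]
holds for all homogeneous $x,y \in \mathcal{G}$ and $v \in V$.

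First, I would recall that the adjoint representation is $(V, [\cdot,\cdot]_V, \beta) = (\mathcal{G}, [\cdot,\cdot], \alpha)$ and verify the identifications needed to apply the proposition. Specifically, with this choice of $V$, we have $V^* = \mathcal{G}^*$ and $\beta^* = \alpha^*$. Moreover, the bracket $[\cdot,\cdot]_{V^*}$ from the proposition coincides with $ad^*$ as defined here: for $x \in \mathcal{G}$, $f \in \mathcal{G}^*$ and $v \in \mathcal{G}$,
\[
[x,f]_{V^*}(v) = -f([x,v]_V) = -f(ad(x)(v)) = (ad^*(x)(f))(v),
\]
so the two constructions agree on the nose.

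Next, I would substitute $V = \mathcal{G}$, $\beta = \alpha$, and rename the dummy variable $v$ to $z$ in the characterizing identity of the proposition. This turns the condition into
\[
[[x,y],\alpha(z)] = (-1)^{|x||y|}[x,[\alpha(y),z]] - [y,[\alpha(x),z]],
\]
which is exactly the stated condition. Invoking the preceding proposition in both directions then gives the ``if and only if'' claim of the corollary.

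There is no real obstacle here: the only subtlety worth checking is that the dualization convention $ad^*(x)(f) = -f \circ ad(x)$ matches the convention $[x,f]_{V^*}(v) = -f([x,v]_V)$ used in the proposition, which I handled above. Everything else is a direct specialization, and the fact that the condition on the bracket is a nontrivial restriction (it does not hold for a general Hom-Lie superalgebra, only when $\alpha$ sufficiently ``controls'' the super-Jacobi identity) explains why the corollary is phrased as an equivalence rather than as an unconditional statement.
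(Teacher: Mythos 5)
Your proposal is correct and follows exactly the route the paper intends: the corollary is stated as an immediate specialization of the preceding proposition to $V=\mathcal{G}$, $\beta=\alpha$, with the adjoint action, and the paper gives no separate proof. Your check that $ad^*(x)(f)=-f\circ ad(x)$ coincides with the bracket $[x,f]_{V^*}(v)=-f([x,v]_V)$ and that $\beta^*=\alpha^*$ is precisely the identification needed.
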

%%%%%%%%%%%%%%%%%%%%%%%%%%%
%%%%%%%%%%%%%%%%%%%%%%%%%%%%%%%%%%%%%%%%%%%%%%%%%%%%%%

%%%%%%%%%%%%%%%%%%%%%%%%%%%%%%%%%%%%%%%%%%%%%%%%%%%%%%%%%%%%%%%%%%%%%
\section{ Cohomology of the  $q$-Witt superalgebra}
In the following, we describe $q$-Witt Hom-Lie superalgebra obtained in \cite{AmmarMakhloufJA2010} and we compute its derivations and second cohomology group. 

Let $\mathcal{A}=\mathcal{A}_{0}\oplus\mathcal{A}_{1}$ be an associative superalgebra. We assume that $\mathcal{A}$ is super-commutative, that is for homogeneous elements $a, b$ the identity $ab=(-1)^{|a||b|}ba$ holds. For example, $\mathcal{A}_{0}=\mathbb{C}[t,t^{-1}]$ and $\mathcal{A}_{1}=\theta\mathcal{A}_{0}$ where $\theta$ is the Grassman variable $(\theta^{2}=0).$
Let  $q\in \mathbb{C}\backslash \{0,1\}$ and $n\in \mathbb{N},$ we set $\{n\}=\frac{1-q^{n}}{1-q}, $ a $q$-number.
Let $\sigma$ be the algebra endomorphism on $\mathcal{A}$ defined by  $$\sigma(t^{n})=q^{n}t^{n} \ \ and \ \ \sigma (\theta)=q\theta.$$
Let $\partial_{t}$ \ and \ $\partial_{\theta}$\ be two linear maps on $\mathcal{A}$ defined by $$\partial_{t}(t^{n})=\{n\}t^{n}, \ \partial_{t}(\theta t^{n})=\{n\}\theta t^{n},$$ $$\partial_{\theta}(t^{n})=0, \ \ \partial_{\theta}(\theta t^{n})=q^{n}t^{n}.$$

\begin{defn}
Let $i\in \mathbb{Z}_{2}$. A $\sigma$-derivation $D_{i}$ on $ \mathcal{A}$ is an endomorphism satisfying:
$$D_{i}(ab)=D_{i}(a)b+(-1)^{i|a|}\sigma (a)D_{i}(b) $$
where $ a, b\ \in \mathcal{A}$ are homogeneous element and $|a|$ is the parity of a.\\
A $\sigma$-derivation $D_0$ is called even $\sigma$-derivation  and $D_1$ is called odd $\sigma$-derivation. The set of
all $\sigma$-derivations is denoted by $Der_{\sigma}( \mathcal{A})$. Therefore, $Der_{\sigma}(\mathcal{A})=Der_{\sigma}(\mathcal{A})_{0}\oplus Der_{\sigma}(\mathcal{A})_{1}$, where
$Der_{\sigma}(\mathcal{A})_{0}$ (resp $Der_{\sigma}(\mathcal{A})_{1}$) is the space of even (resp. odd) $\sigma$-derivations.

\end{defn}

\begin{lem}
The linear map $\Delta=\partial_{t}+\theta \partial_{\theta}$ on  $ \mathcal{A}$ is an even $\sigma$-derivation.\\Hence,$$\Delta(t^{n})=\{n\}t^{n},$$
$$\Delta(\theta t^{n})=\{n+1\}\theta t^{n}.$$
\end{lem}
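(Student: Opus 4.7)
The plan is to verify the two explicit formulas directly on basis elements, then reduce the twisted Leibniz rule to four parity cases indexed by the homogeneous basis $\{t^n,\theta t^n : n\in\mathbb{Z}\}$, with one standard $q$-number identity doing all the work.

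First I would evaluate $\Delta$ on the basis. For $t^n$, $\Delta(t^n)=\partial_t(t^n)+\theta\partial_\theta(t^n)=\{n\}t^n+0$. For $\theta t^n$,
\[
\Delta(\theta t^n)=\partial_t(\theta t^n)+\theta\partial_\theta(\theta t^n)=\{n\}\theta t^n+q^n\theta t^n=(\{n\}+q^n)\theta t^n=\{n+1\}\theta t^n,
\]
using the telescoping identity $\{n\}+q^n=\frac{1-q^n}{1-q}+q^n=\frac{1-q^{n+1}}{1-q}=\{n+1\}$. In particular $\Delta$ sends $\mathcal{A}_0$ to $\mathcal{A}_0$ and $\mathcal{A}_1$ to $\mathcal{A}_1$, so $\Delta$ is even.

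Next I would check $\Delta(ab)=\Delta(a)b+\sigma(a)\Delta(b)$ by bilinearity on the four parity combinations. The recurring identity is
\[
\{m\}+q^m\{n\}=\{m+n\}, \qquad \text{equivalently } (1-q^m)+q^m(1-q^n)=1-q^{m+n}.
\]
Case $(t^m,t^n)$: both sides equal $\{m+n\}t^{m+n}$ by this identity. Case $(t^m,\theta t^n)$: both sides equal $\{m+n+1\}\theta t^{m+n}$, again by $\{m\}+q^m\{n+1\}=\{m+n+1\}$. Case $(\theta t^m,t^n)$: here $\sigma(\theta t^m)=q^{m+1}\theta t^m$, so the right side is $(\{m+1\}+q^{m+1}\{n\})\theta t^{m+n}=\{m+n+1\}\theta t^{m+n}$, which matches $\Delta(\theta t^{m+n})$. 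Case $(\theta t^m,\theta t^n)$: by supercommutativity and $\theta^2=0$ both products $\theta t^m\cdot \theta t^n$ and $\sigma(\theta t^m)\cdot\Delta(\theta t^n)$, $\Delta(\theta t^m)\cdot\theta t^n$ vanish, so both sides are $0$.

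There is essentially no obstacle: the argument is purely bookkeeping. The only thing to watch is the action of $\sigma$ on odd monomials (the extra factor $q$ coming from $\sigma(\theta)=q\theta$ is exactly what shifts $\{m\}+q^m\{n\}=\{m+n\}$ to $\{m+1\}+q^{m+1}\{n\}=\{m+n+1\}$ in the odd cases); since $\Delta$ is even, the parity sign $(-1)^{i|a|}$ in the definition is $+1$ throughout, so no sign tracking is needed.
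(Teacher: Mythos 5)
Your proof is correct: the basis computations, the telescoping identity $\{m\}+q^{m}\{n\}=\{m+n\}$, and the four parity cases (with the sign $(-1)^{i|a|}$ trivially $+1$ since $\Delta$ is even, and with $\sigma(\theta t^{m})=q^{m+1}\theta t^{m}$ handled correctly) constitute exactly the routine verification this lemma requires. The paper states the lemma without proof (it is imported from the cited earlier work), so your direct check supplies precisely the omitted argument and is the natural way to do it.
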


Let $\mathcal{W}^q= \mathcal{A}.\Delta,$ be a superspace generated by the elements $L_{n}=t^{n}.\Delta$ of parity $0$ and the elements $G_{n}=\theta t^{n}.\Delta$ of parity 1.\\
Let $[-,-]_{\sigma}$ be a bracket on the superspace $\mathcal{W}^{q}$ defined by
\begin{eqnarray}
&&[L_{n},L_{m}]_{\sigma}=(\{m\}-\{n\})L_{n+m}, \label{crochet1} \\
&&[L_{n},G_{m}]_{\sigma}=(\{m+1\}-\{n\})G_{n+m}.\label{crochet2}
 \end{eqnarray}
 The others brackets are obtained by supersymmetry or are $0.$\\
It is easy to see that $\mathcal{W}^{q}$ is a $\mathbb{Z}-$graded algebra:
\begin{eqnarray*}
 \mathcal{W}^{q}&=&\oplus_{n\in\mathbb{Z}}\mathcal{W}^{q}_n,
 \end{eqnarray*}
 where
 \begin{eqnarray*}
 \mathcal{W}^{q}_{n}=span_{\mathbb{C}} \{L_{n}, G_{n}\}.
  \end{eqnarray*}
Let $\alpha$ be an even linear map on $\mathcal{W}^{q}$ defined on the generators by $$\alpha(L_{n})=(1+q^{n})L_{n},$$ $$\alpha(G_{n})=(1+q^{n+1})G_{n}.$$
\begin{prop}[\cite{AmmarMakhloufJA2010}]
The triple $(\mathcal{W}^{q},[-,-]_{\sigma},\alpha)$ is a Hom-Lie superalgebra.
\end{prop}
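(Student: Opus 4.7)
The plan is to verify the two Hom-Lie superalgebra axioms directly on the basis $\{L_n, G_n\}_{n \in \mathbb{Z}}$. Super skew-symmetry is immediate: $[L_n,L_m]_\sigma = (\{m\}-\{n\})L_{n+m}$ is manifestly antisymmetric in $n,m$; the supersymmetric extension of \eqref{crochet2} gives $[G_m,L_n]_\sigma = -[L_n,G_m]_\sigma$; and $[G_m,G_n]_\sigma = 0$ trivially satisfies the super-symmetric relation $[G_m,G_n]_\sigma = [G_n,G_m]_\sigma$. By bilinearity of $\alpha$, of the bracket, and of the cyclic sum, it suffices to check \eqref{jacobie} on ordered triples of basis elements.

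Up to relabeling there are four parity types to consider: $(G,G,G)$, $(L,G,G)$, $(L,L,G)$, and $(L,L,L)$. The first two collapse for free: in the $(G,G,G)$ case every inner bracket is zero, and in the $(L_k,G_m,G_n)$ case the term with $[G_m,G_n]_\sigma$ vanishes while the other two cyclic terms become brackets of the form $[\alpha(G_\bullet),G_\bullet]_\sigma = 0$ after expansion of their inner brackets. Thus the substantive work concentrates in the two cases $(L,L,L)$ and $(L,L,G)$.

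For $(L_k,L_m,L_n)$ the Hom-Jacobi sum, after pulling out the common basis vector $L_{k+m+n}$, reduces to the scalar identity
\begin{align*}
&(1+q^k)(\{n\}-\{m\})(\{m+n\}-\{k\}) \\
&\qquad + (1+q^m)(\{k\}-\{n\})(\{n+k\}-\{m\}) \\
&\qquad + (1+q^n)(\{m\}-\{k\})(\{k+m\}-\{n\}) = 0,
\end{align*}
which I would attack by applying the addition formula $\{a+b\} = \{a\} + q^a\{b\}$ to rewrite $\{m+n\}-\{k\} = \{m\} + q^m\{n\} - \{k\}$ (and cyclic analogues), then expanding as a polynomial in $q^k, q^m, q^n$ and collecting. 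The $(L_k,L_m,G_n)$ case produces the analogous cubic identity with $\{n\}$ replaced by $\{n+1\}$ in the appropriate slots and a factor $(1+q^{n+1})$ in place of $(1+q^n)$, arising from \eqref{crochet2} and the formula $\alpha(G_n)=(1+q^{n+1})G_n$.

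The main obstacle is purely the algebraic bookkeeping in these cubic $q$-number identities. I expect them to simplify cleanly for the same structural reason that makes the classical Witt Jacobi identity work: in the limit $q \to 1$ one has $\{n\} \to n$ and $(1+q^n) \to 2$, so both identities degenerate to twice the familiar classical Jacobi identity $\sum_{\mathrm{cyc}} (n-m)(m+n-k) = 0$, and the cancellation pattern in the $q$-case should mirror this term-by-term after the single expansion using $\{a+b\}=\{a\}+q^a\{b\}$.
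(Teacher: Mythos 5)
Your verification is correct. Note that the present paper gives no proof of this proposition: it is imported from \cite{AmmarMakhloufJA2010}, where the bracket on $\mathcal{W}^{q}=\mathcal{A}\cdot\Delta$ is obtained from the even $\sigma$-derivation $\Delta$ on the supercommutative algebra $\mathcal{A}$, i.e.\ from a super analogue of the Hartwig--Larsson--Silvestrov construction, so the twisted super-Jacobi identity is established there at the level of $\sigma$-derivations, uniformly in $\mathcal{A}$ and $\Delta$, rather than by a basis computation. Your route is genuinely different and more elementary: skew-symmetry is immediate, the $(G,G,G)$ and $(L,G,G)$ triples vanish because $[G_m,G_n]_{\sigma}=0$, and the whole statement reduces to a single cubic $q$-identity. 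That identity does hold: multiplying by $(1-q)^{2}$ and writing $A=q^{k}$, $B=q^{m}$, $C=q^{n}$, your $(L_k,L_m,L_n)$ relation becomes
\[
(1+A)(B-C)(A-BC)+(1+B)(C-A)(B-CA)+(1+C)(A-B)(C-AB)=0,
\]
whose twelve monomials cancel in pairs upon expansion; and since $\alpha(G_n)=(1+q^{n+1})G_n$ while $[L_k,G_n]_{\sigma}$ carries $\{n+1\}$, the $(L_k,L_m,G_n)$ case is literally the same identity with $n$ replaced by $n+1$, so no new computation is needed there. Thus the only step left implicit in your write-up, the final expansion, goes through exactly as you predict. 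The construction via $\sigma$-derivations buys generality and a conceptual explanation of why such $q$-identities must hold; your direct check buys a short, self-contained proof that records explicitly the one identity responsible.
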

%(see \cite{faouzi.1})
%%%%%%%%%%%%%%%%11111111111111111111111111111111111............................................

%\textbf{ debut de MODIFICATION le 16/02/2012}

%%%%%%%%%%%%%%%%%%%%%%%%%%%%%%%%%%%%%%%%%%%%%%%%%%%%%%%%%%%%%%%%%%%%%%%%%%%%%%%%%%%%%%%%%%%%%%%%%%%%%%%%%%%%%%%%%%%%%%%%%%%%%%%%%%%%%%%%%
\subsection{Derivations of  the Hom-Lie superalgebra $\mathcal{W}^{q}$}

 A homogeneous $\alpha^{k}$-derivation is said  of degree $s$ if there exists  $s\in \mathbb{Z}$ such that for all $n\in  \mathbb{Z}$ we have $D(<L_n>)\subset <L_{n+s}>$. 
 The corresponding subspace of  homogeneous $\alpha^{k}$-derivations of degree $s$ is denoted by $Der_{\alpha^{k},i}^{s} \ (i\in \mathbb{Z}_{2})$.\\
 It easy to check that $\displaystyle Der_{\alpha^{k}}(\mathcal{W}^{q})=\oplus_{s\in \mathbb{Z}}\big(Der_{\alpha,0}^{s}(\mathcal{W}^{q})\oplus Der_{\alpha,1}^{s}(\mathcal{W}^{q})\big).$\\
 Let $D$ be  a homogeneous $\alpha^{k}$-derivation 
\begin{eqnarray*}
D([x,y])=[D(x),  \alpha^{k}(y)]+(-1)^{|x||D|}[\alpha^{k}(x),D(y)]\ \ for \ all\  homogeneous \ x,\ y \in \mathcal{W}^{q}.
\end{eqnarray*}
We deduce that
\begin{eqnarray}
(\{m\}-\{n\})D(L_{n+m})=(1+q^{m})^{k}[D(L_{n}),L_{m}]+(1+q^{n})^{k}[L_{n},D(L_{m})] \label{der zero}
\end{eqnarray}
and
\begin{eqnarray}
(\{m+1\}-\{n\}) D(G_{n+m})=(1+q^{m+1})^{k}[D(L_{n}),G_{m}]+(1+q^{n})^{k}[L_{n},D(L_{m})] \quad   \forall n,\ m \in \mathbb{Z}. \nonumber \label{der 0}\\
\end{eqnarray}

\subsubsection{The $\alpha^{0}$-derivation of the  Hom-Lie superalgebra $\mathcal{W}^{q}$}
%We determine the set $Der_{\alpha^{0}}$ of $\alpha$-derivation .
\begin{prop}
The set of $\alpha^{0}$-derivations of the Hom-Lie superalgebra $\mathcal{W}^{q}$ is $$Der_{\alpha^{0}}(\mathcal{W}^{q})=<D_{1}>\oplus<D_{2}>$$ where $D_{1}$ and $D_2$ are defined, with respect to the basis as 
\begin{eqnarray*}
D_{1}(L_{n})=nL_n, \ D_{1}(G_n)=G_n,\\ 
D_{2}(L_{n})=nG_{n-1},\ D_{2}(G_n)=L_{n-1}.
\end{eqnarray*}
\end{prop}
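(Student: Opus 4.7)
The approach is to exploit the $\mathbb{Z}$-grading $\mathcal{W}^{q} = \bigoplus_{n \in \mathbb{Z}} \mathcal{W}^{q}_{n}$ with $\mathcal{W}^{q}_{n} = \mathrm{span}_{\mathbb{C}}(L_{n}, G_{n})$, so that any $\alpha^{0}$-derivation $D$ decomposes uniquely as $D = \sum_{s, i} D_{s, i}$ where $D_{s, i}$ is homogeneous of degree $s \in \mathbb{Z}$ and parity $i \in \mathbb{Z}_{2}$. Each homogeneous component is again an $\alpha^{0}$-derivation, so it suffices to classify every stratum $Der_{\alpha^{0}, i}^{s}(\mathcal{W}^{q})$ separately.

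For fixed $(s, i)$, I would parametrize $D$ by two complex scalar sequences $(a_{n})_{n \in \mathbb{Z}}$ and $(b_{n})_{n \in \mathbb{Z}}$: in the even case ($i = 0$) set $D(L_{n}) = a_{n} L_{n+s}$ and $D(G_{n}) = b_{n} G_{n+s}$; in the odd case ($i = 1$) set $D(L_{n}) = a_{n} G_{n+s}$ and $D(G_{n}) = b_{n} L_{n+s}$. The commutation relation $D \circ \alpha = \alpha \circ D$, combined with the eigenvalues $\alpha(L_{k}) = (1 + q^{k}) L_{k}$ and $\alpha(G_{k}) = (1 + q^{k+1}) G_{k}$, immediately forces scalar equations of the form $(1 + q^{n}) a_{n} = (1 + q^{n+s}) a_{n}$, and analogues for $b_{n}$, which already restrict the admissible $(s, i)$ to a very short list.

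Next I would apply the $\alpha^{0}$-derivation identity $D([x, y]) = [D(x), y] + (-1)^{|x||D|}[x, D(y)]$ to the generating pairs $(L_{n}, L_{m})$, $(L_{n}, G_{m})$, and $(G_{n}, G_{m})$. Using the bracket formulas (\ref{crochet1})--(\ref{crochet2}), the derivation identities (\ref{der zero})--(\ref{der 0}) specialize to a coupled system of linear recurrences on $(a_{n}, b_{n})$ whose structure constants are the $q$-numbers $\{m\} - \{n\}$ and $\{m+1\} - \{n\}$, together with mixing identities between $(a_{n})$ and $(b_{n})$ coming from the $LG$-bracket.

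Finally I would solve these recurrences stratum by stratum. Substituting low values such as $(n, m) = (0, 1), (1, 1), (1, -1), \ldots$ reduces each system to a finite number of linear equations that pin $(a_{n})$ and $(b_{n})$ down to at most a one-parameter family; a case analysis then shows that only the strata $(s, i) = (0, 0)$ and $(s, i) = (-1, 1)$ admit a non-trivial solution, yielding the one-dimensional spaces $\langle D_{1} \rangle$ and $\langle D_{2} \rangle$ respectively. The main obstacle is precisely this last step: the coefficients $\{m\} - \{n\}$ and $\{m+1\} - \{n\}$ can vanish along non-trivial sublattices when $q$ is a root of unity, so one must choose the test pairs $(n, m)$ carefully and use enough of them simultaneously to rule out spurious additional solutions and conclude the claimed rigidity.
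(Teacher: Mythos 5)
Your overall skeleton---splitting $D$ into components homogeneous for the $\mathbb{Z}$-degree and the parity, encoding each component by coefficient sequences $(a_n)$, $(b_n)$, feeding the pairs $(L_n,L_m)$, $(L_n,G_m)$, $(G_n,G_m)$ into the Leibniz identity and solving the resulting $q$-recurrences by special choices of $(n,m)$---is exactly the paper's proof. The step that genuinely differs is your preliminary filter by the commutation $D\circ\alpha=\alpha\circ D$, and this is where the proposal fails as a proof of the stated proposition. In the odd case the eigenvalue equations you invoke read $(1+q^{n})a_n=(1+q^{n+s+1})a_n$ and $(1+q^{n+1})b_n=(1+q^{n+s})b_n$; away from roots of unity the first forces $s=-1$ on the $a$-part while the second forces $s=+1$ on the $b$-part. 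Hence your filter annihilates the $G$-component of every odd derivation of degree $-1$---in particular it is incompatible with the claimed generator $D_2$, whose component $G_n\mapsto L_{n-1}$ does not commute with $\alpha$, since $\alpha(G_n)=(1+q^{n+1})G_n$ whereas $\alpha(L_{n-1})=(1+q^{n-1})L_{n-1}$---and at the same time it leaves open the odd stratum of degree $+1$, where $L_n\mapsto 0$, $G_n\mapsto b\,L_{n+1}$ passes both your eigenvalue test and all three Leibniz identities, yet you discard it without computation. So the conclusion announced in your last step (only $(s,i)=(0,0)$ and $(-1,1)$ survive, giving $\langle D_1\rangle$ and $\langle D_2\rangle$) does not follow from, and in part contradicts, your own step 3. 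Note that the paper's argument never uses $D\circ\alpha=\alpha\circ D$ at all: it eliminates the degrees $s\neq 0$ (even case) and $s\neq -1$ (odd case) purely from the bracket recurrences, by first setting $m=0$ and then substituting $(n,m)=(2s,s)$, respectively $(n,m)=(2s+2,s+1)$.

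Beyond that, the part you leave as a sketch (``a finite number of linear equations ... a one-parameter family'') is precisely the substance of the paper's proof: in the surviving strata one must actually solve the coupled system, in particular the mixed recurrence coming from $[L_n,G_m]$, which ties $(b_n)$ to $(a_n)$ (for the even degree-$0$ stratum it has the form $b_{n+m}=a_n+b_m$ for $q^{m+1}\neq q^{n}$) and is what has to pin down the precise shape of $D_1$ and $D_2$; as written your plan neither carries this out nor explains how it yields a single parameter per stratum. Your caveat about $q$ being a root of unity is legitimate but unresolved, and it applies equally to the paper's substitutions and to denominators such as $1-q^{s}$ and $1-q^{s+1}$; a genericity hypothesis on $q$ should be made explicit in either approach. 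In short, the proposal is a plausible plan whose one novel ingredient actively conflicts with the generators it is supposed to produce, and whose decisive computational step is missing.
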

\begin{proof}

We consider The two cases $|D|=0$ and $|D|=1$.
\textbf{Case 1: $|D|=0$} \\
Let $D$  ba an  even derivation of degree $s$, $D(L_n)=a_{s,n}L_{s+n}$ and $D(G_{n})=b_{s,n}G_{s+n}$, by (\ref{der zero}) we have
\begin{eqnarray*}
(\{m\}-\{n\})a_{s,n+m}
=(\{m\}-\{n+s\})a_{s,n}+(\{m+s\}-\{n\})a_{s,m}.
\end{eqnarray*}
We deduce that
\begin{eqnarray*}
(q^{n}-q^{m})a_{s,n+m}
=(q^{n+s}-q^{m})a_{s,n}+(q^{n}-q^{m+s})a_{s,m}.
\end{eqnarray*}
If $m=0$, we have,
\begin{eqnarray*}
q^{n}(1-q^{s})a_{s,n}
=(q^{n}-q^{s})
a_{s,0}.
\end{eqnarray*}
If $ s\neq 0$ we have
\begin{eqnarray*}
a_{s,n}
=\frac{1-q^{s-n}}{1-q^{s}}
a_{s,0}.
\end{eqnarray*}
We deduce that
\begin{eqnarray}
(q^{n}-q^{m})\frac{1-q^{s-n}}{1-q^{s}}
a_{s,0}
=(q^{n+s}-q^{m})\frac{1-q^{s-n}}{1-q^{s}}
a_{s,0}+(q^{n}-q^{m+s})\frac{1-q^{s-m}}{1-q^{s}}
a_{s,0}.\label{der 00}
\end{eqnarray}
Taking $n=2s$,\ $m=s$, in (\ref{der 00}) we obtain $a_{s,0}=0$, so $a_{s,n}=0$.\\
If $s=0$ and $n\neq m$ we have $a_{s,n}=na_{s,1}$.\\
By (\ref{der 0}) and
$D(G_{n})=b_{s,n}G_{n+s}$ we have
\begin{eqnarray*}
(\{m+1\}-\{n\})b_{s,n+m}=(\{m+s+1\}-\{n\})b_{s,m}
\end{eqnarray*}
So
\begin{eqnarray*}
(q^{n}-q^{m+1})b_{s,n+m}=(q^{n}-q^{m+s+1})b_{s,m}.
\end{eqnarray*}
Taking $n=0$, we have, $(q^{m+1}-q^{m+s+1})b_{s,m}=0$ then If $s\neq 0$ we have $b_{s,m}=0$.\\
If $s=0$ and $n\neq m+1$ we have $b_{s,n+m}=b_{s,m}$ so $b_{s,n}=b_{s,0}$.\\
Finally, it follows  that  the set of even $\alpha^{0}$-derivations is $\displaystyle Der_{\alpha^{0},0}(\mathcal{W}^{q})=Der_{\alpha,0}^{0}(\mathcal{W}^{q})=<D_1>$ with $D_1(L_{n})=nL_n$ and $D_1(G_n)=G_n$.\\

\textbf{Case 2: $|D|=1$} \\
Let $D$ be an  odd derivation  of degree $s$, $D(L_n)=a_{s,n}G_{s+n}$ and $D(G_{n})=b_{s,n}L_{s+n}$. By(\ref{der zero}) we have
\begin{eqnarray*}
(\{m\}-\{n\})a_{s,n+m}
=(\{m\}-\{n+s+1\})a_{s,n}+(\{m+s+1\}-\{n\})a_{s,m}.
\end{eqnarray*}
We deduce that,
\begin{eqnarray*}
(q^{n}-q^{m})a_{s,n+m}
=(q^{n+s+1}-q^{m})a_{s,n}+(q^{n}-q^{m+s+1})a_{s,m}.
\end{eqnarray*}
If $m=0$, we have,
\begin{eqnarray*}
q^{n}(1-q^{s+1})a_{s,n}
=(q^{n}-q^{s+1})
a_{s,0}.
\end{eqnarray*}
If $ s\neq -1$ we have
\begin{eqnarray*}
a_{s,n}
=\frac{1-q^{s+1-n}}{1-q^{s+1}}
a_{s,0}.
\end{eqnarray*}
Then
\begin{eqnarray}
(q^{n}-q^{m})\frac{1-q^{s+1-n}}{1-q^{s+1}}
a_{s,0}
=(q^{n+s+1}-q^{m})\frac{1-q^{s+1-n}}{1-q^{s+1}}
a_{s,0}+(q^{n}-q^{m+s+1})\frac{1-q^{s+1-m}}{1-q^{s+1}}
a_{s,0}.\label{der 011}
\end{eqnarray}
Taking $n=2s+2$,\ $m=s+1$, in (\ref{der 011}) we obtain $a_{s,0}=0$, so $a_{s,n}=0$.\\
If $s=-1$ and $n\neq m$, then $a_{s,n}=na_{s,1}$.\\
By (\ref{der 0}) and
$D(G_{n})=b_{s,n}L_{n+s}$ we have
\begin{eqnarray*}
(\{m+1\}-\{n\})b_{s,n+m}&=&(\{m+s+1\}-\{n\})b_{s,m}.
\end{eqnarray*}
Taking $n=0$, we have $(q^{m+1}-q^{m+s+1})b_{s,m}=0$ then if $s\neq -1$, it follows  $b_{s,m}=0$.\\
If $s=-1$ and $n\neq m+1$, we obtain $b_{s,n+m}=b_{s,m}$ so $b_{s,n}=b_{s,0}$.\\
Finally,  it follows that  the set of odd $\alpha^{0}$-derivations is $\displaystyle Der_{\alpha^{0},1}(\mathcal{W}^{q})=Der_{\alpha,0}^{-1}(\mathcal{W}^{q})=<D_2>$ with $D_{2}(L_{n})=nG_{n-1}$ and $D_{2}(G_n)=L_{n-1}$.\\

\end{proof}

%%%%%%%%%%%%%%%%%%%%%%%%%%%%%%%%%%%%%%%%%%%%
\subsubsection{ The $\alpha^{1}$-derivations of the Hom-Lie superalgebra $\mathcal{W}^{q}$}
%In this section we determine the set $Der_{\alpha}$ of $\alpha$-derivation.
\begin{prop}
If $D$ is an $\alpha$-derivation then $D=0$.
\end{prop}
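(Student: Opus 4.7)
My plan is to mimic the case analysis used in the preceding $\alpha^0$ proposition, showing that the extra factors $(1+q^m)^k$ and $(1+q^n)^k$ appearing with $k=1$ make the resulting recurrences incompatible with any nonzero solution.

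First I would decompose $D$ into homogeneous components. Using the grading $\mathcal{W}^{q}=\oplus_{n\in\Z}\mathcal{W}^{q}_n$, any $\alpha$-derivation splits as a sum of homogeneous $\alpha$-derivations of fixed degree $s$ and fixed $\Z_2$-parity, so it suffices to show each such component vanishes. Then I would split into two cases: $|D|=0$ with $D(L_n)=a_{s,n}L_{n+s}$, $D(G_n)=b_{s,n}G_{n+s}$, and $|D|=1$ with $D(L_n)=a_{s,n}G_{n+s}$, $D(G_n)=b_{s,n}L_{n+s}$.

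Next, for the even case, applying $D$ to $[L_n,L_m]_\sigma$ and using (\ref{der zero}) with $k=1$ together with $\{k\}=(1-q^k)/(1-q)$ yields, after clearing $(1-q)$, the relation
\begin{equation*}
(q^n-q^m)a_{s,n+m}=(1+q^m)(q^{n+s}-q^m)a_{s,n}+(1+q^n)(q^n-q^{m+s})a_{s,m}.
\end{equation*}
Setting $m=0$ gives $[q^n+1-2q^{n+s}]a_{s,n}=(1+q^n)(q^n-q^s)a_{s,0}$; then putting $n=0$ on top of this forces $(2-2q^s)a_{s,0}=0$, so $a_{s,0}=0$ for $s\neq 0$, and then $a_{s,n}=0$ for all $n$ since the coefficient $q^n+1-2q^{n+s}$ is nonzero for generic $n$ (it vanishes for only finitely many $n$, and the few remaining indices are then killed by choosing another pair $(n,m)$ in the main relation). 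The case $s=0$ then yields, for $n\neq m$, $a_{0,n+m}=(1+q^m)a_{0,n}+(1+q^n)a_{0,m}$; substituting the $m=0$ formula $a_{0,n}=-(1+q^n)a_{0,0}$ into this and comparing both sides forces $a_{0,0}=0$, hence $a_{0,n}=0$ for all $n$. An entirely analogous analysis applied to $[L_n,G_m]_\sigma$ via (\ref{der 0}) disposes of $b_{s,n}$, using $n=0$ to extract a single-variable equation and then one more substitution to close the system.

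The odd case follows the same template: apply the derivation identity to $[L_n,L_m]_\sigma$ and $[L_n,G_m]_\sigma$, where now the target indices are shifted by $s+1$ instead of $s$, replacing $q^s$ by $q^{s+1}$ throughout. The critical degree becomes $s=-1$, and the same two-step substitution ($m=0$ then cross-check with a second pair) shows that the analogues of the non-trivial $\alpha^0$-solutions $D_1,D_2$ cannot survive because of the additional $(1+q^n)$ and $(1+q^{m+1})$ factors.

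The main obstacle I anticipate is bookkeeping: the recurrences admit a few exceptional indices where the leading coefficient vanishes (roots of unity of $q$, indices with $q^n=-1$, etc.), and one must verify that for \emph{every} degree $s$ at least one substitution pair $(n,m)$ gives a nondegenerate constraint. This is where the $k=1$ case differs essentially from $k=0$: the product $(1+q^n)(1+q^m)$ in the new relation cannot be cancelled the way a single factor could in the $\alpha^0$-calculation, and it is this failure of cancellation that eliminates both the candidate degree $0$ even derivation and the candidate degree $-1$ odd derivation, giving $D=0$.
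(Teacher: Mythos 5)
Your overall template is the same as the paper's: decompose $D$ into homogeneous pieces of fixed parity and degree $s$, write the $k=1$ Leibniz identity (\ref{der zero})--(\ref{der 0}) on the basis $\{L_n,G_n\}$, and try to reduce everything to $a_{s,0}$ and $b_{s,m}$. The part of your plan concerning the $b$-coefficients is sound: once $D(L_n)=0$ is known, the relation coming from $[L_n,G_m]$ has a single term on the right, and $n=0$ already gives $(1+q^{m+1}-2q^{m+s+1})b_{s,m}=0$, as in the paper. But your decisive step for the $a$-coefficients is wrong. Setting $m=0$ in $(q^n-q^m)a_{s,n+m}=(1+q^m)(q^{n+s}-q^m)a_{s,n}+(1+q^n)(q^n-q^{m+s})a_{s,m}$ gives $(1+q^n-2q^{n+s})a_{s,n}=(1+q^n)(q^n-q^s)a_{s,0}$, and evaluating this at $n=0$ yields $(2-2q^s)a_{s,0}=2(1-q^s)a_{s,0}$, which is an identity, not a constraint: you dropped the nonzero right-hand side when you concluded $(2-2q^s)a_{s,0}=0$. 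Specializing $n=0$ instead of $m=0$ in the two-variable relation returns the same equation, so no single one-variable specialization forces $a_{s,0}=0$; the equations with one index equal to $0$ are consistent with an arbitrary $a_{s,0}$. The same gap recurs verbatim in your odd case (with $q^{s}$ replaced by $q^{s+1}$), which you reduce to ``the same two-step substitution''.

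What is actually needed, and what the paper supplies, is a two-variable consistency check: substitute the closed form $a_{s,n}=\frac{(1+q^{n})(q^{n}-q^{s})}{1+q^{n}-2q^{n+s}}\,a_{s,0}$ back into the recurrence for general $(n,m)$ and exhibit parameters for which the resulting rational identity in $q^{n},q^{m}$ fails; the paper does this asymptotically, letting $n,m\to+\infty$ when $0<q<1$ and fixing $m=s$, $n\to+\infty$ when $q>1$, which forces $a_{s,0}=0$ and hence $D(L_n)=0$ in every degree, and then kills $b_{s,m}$ as above. Your $s=0$ computation -- plugging $a_{0,n}=-(1+q^{n})a_{0,0}$ into $a_{0,n+m}=(1+q^{m})a_{0,n}+(1+q^{n})a_{0,m}$, which gives $(1+2q^{n}+2q^{m}+q^{n+m})a_{0,0}=0$ -- is exactly this kind of check, but you carried it out only in that one subcase, where in fact the paper makes no case distinction at all for $k=1$. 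For all the remaining degrees (even $s\neq0$, odd $s\neq-1$) the proposal as written proves nothing, so the claimed elimination of the candidate derivations is not established; to repair it you must either reproduce the paper's limiting argument or verify directly that some explicit pair $(n,m)$ makes the substituted identity fail for every degree $s$.
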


 \begin{proof}

\textbf{Case 1: $|D|=0$} \\
Let $D$ be an even derivation  of degree $s$, $D(L_n)=a_{s,n}L_{s+n}$ and $D(G_{n})=b_{s,n}G_{s+n}$. 

By (\ref{der zero}) we have
\begin{eqnarray*}
(\{m\}-\{n\})a_{s,n+m}
=(1+q^{m})(\{m\}-\{n+s\})a_{s,n}+(1+q^{n})(\{m+s\}-\{n\})a_{s,m}.
\end{eqnarray*}
We deduce that,
\begin{eqnarray*}
a_{s,n+m}
=\frac{(1+q^{m})(q^{n+s}-q^{m})}{q^{n}-q^{m}}a_{s,n}+\frac{(1+q^{n})(q^{n}-q^{m+s})}{q^{n}-q^{m}}a_{s,m}.
\end{eqnarray*}
If $m=0$, we have,
\begin{eqnarray*}
a_{s,n}
=\frac{(1+q^{n})(q^{n}-q^{s})}{1+q^{n}-2q^{n+s}}
a_{s,0}.
\end{eqnarray*}
So
\begin{eqnarray*}
a_{s,n+m}
=\frac{(1+q^{n+m})(q^{n+m}-q^{s})}{1+q^{n+m}-2q^{n+m+s}}
a_{s,0}.
\end{eqnarray*}
Then
\begin{eqnarray*}
&&\frac{(1+q^{n+m})(q^{n+m}-q^{s})}{1+q^{n+m}-2q^{n+m+s}}
a_{s,0}\\
&=&\frac{(1+q^{n})(q^{n}-q^{m+s})(1+q^{m})(q^{m}-q^{s})}{(q^{n}-q^{m})(1+q^{m}-2q^{m+s})}
a_{s,0}
-\frac{(1+q^{m})(q^{m}-q^{n+s})(1+q^{n})(q^{n}-q^{s})}{(q^{n}-q^{m})(1+q^{n}-2q^{n+s})}a_{s,0}.\\
\end{eqnarray*}
If $q\in[0,1[$, and letting  $n,\ m\rightarrow +\infty$  we obtain $a_{s,0}=0$.\\
If $q>1$ and for a fixed $m=s$, then when $n$ goes to infinity we obtain   $a_{s,0}=0$.\\
We deduce that $D(L_n)=0$.\\

By (\ref{der 0}) and
$D(G_{n})=b_{s,n}G_{n+s}$ we have
\begin{eqnarray*}
(\{m+1\}-\{n\})b_{s,n+m}&=&(1+q^{n})(\{m+s+1\}-\{n\})b_{s,m}.
\end{eqnarray*}
So
\begin{eqnarray*}
(q^{n}-q^{m+1})b_{s,n+m}&=&(1+q^{n})(q^{n}-q^{m+s+1})b_{s,m}.
\end{eqnarray*}
Taking $n=0$, we have, $(1+q^{m+1}-2q^{m+s+1})b_{s,m}=0$.\\
Then t $b_{s,m}=0$ so $f(G_n)=0$.
 Hence $D\equiv 0$.\\
 %%%%%%%%%%%%%%%%%%%%%%%%%%%%%%%%%%%%%%%%%%%%%%
 \textbf{Case 2: $|D|=1$} \\
Let $D$ be an   odd derivation  of degree $s$, $D(L_n)=a_{s,n}G_{s+n}$ and $D(G_{n})=b_{s,n}L_{s+n}$. By (\ref{der zero}) we have
\begin{eqnarray*}
(\{m\}-\{n\})a_{s,n+m}
=(1+q^{m})(\{m\}-\{n+s+1\})a_{s,n}+(1+q^{n})(\{m+s+1\}-\{n\})a_{s,m}.
\end{eqnarray*}
Then
\begin{eqnarray*}
a_{s,n+m}
=\frac{(1+q^{n})(q^{n}-q^{m+s+1})}{(q^{n}-q^{m})}a_{s,m}-\frac{(1+q^{m})(q^{m}-q^{n+s+1})}{(q^{n}-q^{m})}a_{s,n}.\\
\end{eqnarray*}
 If $m=0$, we have,
\begin{eqnarray*}
a_{s,n}
=\frac{(1+q^{n})(q^{n}-q^{s+1})}{1+q^{n}-2q^{n+s+1}}
a_{s,0}.
\end{eqnarray*}
So
\begin{eqnarray*}
a_{s,n+m}
=\frac{(1+q^{n+m})(q^{n+m}-q^{s+1})}{1+q^{n+m}-2q^{n+m+s+1}}
a_{s,0}.
\end{eqnarray*}
 Then
\begin{eqnarray*}
&&\frac{(1+q^{n+m})(q^{n+m}-q^{s+1})}{1+q^{n+m}-2q^{n+m+s+1}}
a_{s,0}\\
&=&\frac{(1+q^{n})(q^{n}-q^{m+s+1})(1+q^{m})(q^{m}-q^{s+1})}{(q^{n}-q^{m})(1+q^{m}-2q^{m+s+1})}
a_{s,0}
-\frac{(1+q^{m})(q^{m}-q^{n+s+1})(1+q^{n})(q^{n}-q^{s+1})}{(q^{n}-q^{m})(1+q^{n}-2q^{n+s+1})}a_{s,0}.\\
\end{eqnarray*}
 If $q\in[0,1[$, and making  $n,\ m\rightarrow +\infty$, we obtain $a_{s,0}=0$.\\
If $q>1$ and setting  $m=s$, then if $n$ goes to infinity we obtain   $a_{s,0}=0$.\\
We deduce that $D(L_n)=0$.\\
By (\ref{der 0}) and
$D(G_{n})=b_{s,n}L_{n+s}$,  we obtain 
\begin{eqnarray*}
(\{m+1\}-\{n\})b_{s,n+m}L_{m+n+s}&=&(1+q^{n})(\{m+s\}-\{n\})b_{s,m}L_{m+s+n}.
\end{eqnarray*}
So
\begin{eqnarray*}
(q^{n}-q^{m+1})b_{s,n+m}&=&(1+q^{n})(q^{n}-q^{m+s})b_{s,m}.
\end{eqnarray*}
Taking $n=0$ leads to $(1+q^{m+1}-2q^{m+s})b_{s,m}=0$.\\
It turns out that  $b_{s,m}=0$ so $D(G_n)=0$. 
Hence $D\equiv 0$.
 \end{proof}

\subsubsection{The $q$-derivations of the Hom-Lie superalgebra $\mathcal{W}^{q}$}
%For a endomorphism   $\varphi$ for $\underline{\mathcal{G}}$, we call $s$ the degree of  $\varphi$ if  $\varphi (\underline{\mathcal{G}}_{n})\subset \underline{\mathcal{G}}_{n+s}$ and denote this by $deg (\varphi) =s.$%
In this section, we study the $q$-derivations  of $\mathcal{W}^{q}$.
The derivation algebra of $\mathcal{W}^{q}$ is denoted by  $Der \mathcal{W}^{q}.$ Since $\mathcal{W}^{q}$ is
$\mathbb{Z}_{2}$-graded Hom-Lie superalgebra, we have $$Der \mathcal{W}^{q}= (Der \mathcal{W}^{q})_{0} \oplus (Der \mathcal{W}^{q})_{1},$$
where $(Der \mathcal{W}^{q})_{0}=\{D\in Der \mathcal{W}^{q}:\  D((\mathcal{W}^{q})_{i})\subset(\mathcal{W}^{q})_{i}, i\in
\mathbb{Z}_{2} \},$ denote the set of even derivations of $\mathcal{W}^{q},$ and $(Der \mathcal{W}^{q})_{1}=\{D\in Der \mathcal{W}^{q}:\  D((\mathcal{W}^{q})_{i})\subset(\mathcal{W}^{q})_{i+1}, i\in
\mathbb{Z}_{2} \},$ denote the set of odd derivations of $\mathcal{W}^{q}.$\\ 
The space  $\mathcal{W}^{q}$ maybe viewed also as a $\mathbb{Z}$-graded space.
 Define $$(Der \mathcal{W}^{q})_{s}=\{D\in Der \mathcal{W}^{q}:\ D(\mathcal{W}^{q}_{n})\subset \mathcal{W}_{n+s}^{q}\}.$$
Then we have $\displaystyle Der \mathcal{W}^{q}=\oplus_{s\in \mathbb{Z}}(Der \mathcal{W}^{q})_{s}.$ Obviously, the $\mathbb{Z}$-graded
and $\mathbb{Z}_{2 }$-graded structures are compatible.\\ Moreover let $Der_{q} \mathcal{W}^{q}_{0}=\oplus_{s\in \mathbb{Z}}(Der \mathcal{W}^{q})'_{s},\ Der_q \mathcal{W}^{q}_{1}=\oplus_{s\in \mathbb{Z}}(Der \mathcal{W}^{q})''_{s},$ where $(Der \mathcal{W}^{q})'_{s}\oplus(Der \mathcal{W}^{q})''_{s}=(Der \mathcal{W}^{q})_{s}.$

\begin{defn}
An element 
  $\varphi\in (Der \mathcal{W}^{q})_{0}\cap(Der \mathcal{W}^{q})_{s} $ (resp.  $ \varphi\in(Der \mathcal{W}^{q})_{1}\cap(Der \mathcal{W}^{q})_{s})$ is a $q$-derivation  if

\begin{eqnarray}
  \varphi ([x,y]) &=&\frac{1}{1+q^{s}}\big([ \varphi (x),\alpha(y)]+[\alpha (x),\varphi(y)]\big) \label{derwit1}
\end{eqnarray}
\begin{eqnarray}
(\textrm {resp.} \  \varphi ([x,y]) &=&\frac{1}{1+q^{s+1}}\big([ \varphi (x),\alpha(y)]+(-1)^{|x|}[\alpha (x),\varphi(y)]\big) \ \ ).\label{derwit2}
\end{eqnarray}
where $x, y \in \mathcal{W}^{q} $ are homogeneous elements.\\

For a fixed $a\in (\mathcal{W}^{q})_{i},$ we obtain the following  $q$-derivation
\begin{eqnarray*}
\varphi_{a}:&\mathcal{W}^{q}&\longrightarrow \mathcal{W}^{q}\\
&x&\longmapsto[a,x].
\end{eqnarray*}
The map  is denoted by $ad_{a}$ and is called the inner $q$-derivation.
\end{defn}

\begin{prop}
If $\varphi$ is an odd $q$-derivation of degree $s$ then it  is an inner
$q$-derivation, more precisely: $$\displaystyle (Der \mathcal{W}^{q} )_{1}=\oplus_{s\in \mathbb{Z}}\langle ad_{G_{s}}\rangle .$$
\end{prop}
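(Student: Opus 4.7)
The plan is to parametrise an arbitrary odd $q$-derivation $\varphi$ of degree $s$ by scalars $a_n,b_n\in\mathbb{C}$ via
$$\varphi(L_n)=a_n\,G_{n+s},\qquad \varphi(G_n)=b_n\,L_{n+s}\qquad(n\in\mathbb{Z}),$$
and to extract from the defining identity \eqref{derwit2} enough linear relations to force $a_n=c(\{n\}-\{s+1\})$ for some constant $c\in\mathbb{C}$ and $b_n=0$ for all $n$. As a preliminary model, a direct computation using \eqref{crochet1}, \eqref{crochet2} and the fact that $[G_k,G_\ell]=0$ gives
$$ad_{G_s}(L_n)=(\{n\}-\{s+1\})\,G_{n+s},\qquad ad_{G_s}(G_n)=0,$$
so showing the target formulas for $a_n, b_n$ will identify $\varphi$ with $c\cdot ad_{G_s}$ and establish the proposition.

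First I would apply \eqref{derwit2} to the pair $(L_n,L_m)$, using $[L_n,L_m]=(\{m\}-\{n\})L_{n+m}$ and $\alpha(L_k)=(1+q^k)L_k$. The result is a functional equation purely in the $a_n$; specialising $m=0$ solves it in closed form as $a_n=\phi(q^n)\,a_0$ for an explicit rational function $\phi$, and putting this closed form back into the full equation for generic $n,m$ pins down $\phi$ to be exactly the one arising from $ad_{G_s}$, so that $a_n=c(\{n\}-\{s+1\})$.

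Next I would apply \eqref{derwit2} to $(L_n,G_m)$; since $[G_{n+s},G_m]=0$, the contribution from $[\varphi(L_n),\alpha(G_m)]$ vanishes and one is left with the recursion
$$(1+q^{s+1})(\{m+1\}-\{n\})\,b_{n+m}=(1+q^n)(\{m+s\}-\{n\})\,b_m.$$
Applying it also to $(G_n,G_m)$, whose left-hand side vanishes because $[G_n,G_m]=0$, gives the companion relation
$$b_n(1+q^{m+1})(\{m+1\}-\{n+s\})+b_m(1+q^{n+1})(\{n+1\}-\{m+s\})=0.$$
Setting $n=0$ in the first yields $\bigl((1+q^{s+1})-2\{m+s\}\bigr)\,b_m=0$, which for generic $q$ kills $b_m$ except possibly at one isolated value of $m$; the second relation then rules out even that exceptional case. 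Assembling the two strands gives $\varphi=c\cdot ad_{G_s}$, whence $(Der\,\mathcal{W}^q)_1\cap(Der\,\mathcal{W}^q)_s=\langle ad_{G_s}\rangle$ for each $s$, and the stated direct-sum decomposition follows.

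The main obstacle will be the degenerate algebra around exceptional values of $s$ for which denominators in these recursions vanish (for instance $s=1$, or $s$ making $1+q^{s+1}=2\{m+s\}$ for an isolated $m$); in those cases a single pair of test elements is insufficient and one must genuinely combine the $(L_n,G_m)$ and $(G_n,G_m)$ identities to eliminate residual solutions. A minor technical point throughout is that $q$ is assumed generic (in particular not a root of unity), so $q$-number factors of the form $1-q^k$ can be treated as nonzero.
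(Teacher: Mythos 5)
Your overall strategy --- plugging basis pairs into the defining identity \eqref{derwit2}, solving the resulting recursions for the coefficients $a_n,b_n$, and matching against $ad_{G_s}$ --- is the same as the paper's, and your treatment of the $b_n$'s is essentially sound: the companion relation coming from $(G_n,G_m)$ does dispose of the one isolated coefficient that the $n=0$ specialization misses (the coefficient of $\varphi(G_{-1})$ in degree $s=1$), a point the paper actually passes over silently. One small slip there: the $n=0$ specialization reads $\bigl((1+q^{s+1})\{m+1\}-2\{m+s\}\bigr)b_m=0$; you dropped the factor $\{m+1\}$, which changes which $(s,m)$ are exceptional, though not the shape of the argument.

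The genuine gap is on the $a_n$ side: the degenerate degree is $s=-1$ (where $\{s+1\}=0$), not $s=1$, and your plan does not cover it. Setting $m=0$ in the $(L_n,L_m)$ relation gives $\{s+1\}\,a_n=(\{s+1\}-\{n\})\,a_0$; for $s\neq-1$ this is the closed form you want, but for $s=-1$ it collapses to $a_0=0$ and says nothing about the other $a_n$, so there is no formula $a_n=\phi(q^n)\,a_0$ to substitute back --- indeed $ad_{G_{-1}}$ itself has $a_0=0$ while $a_n=\{n\}\neq 0$. Your proposed fallback of combining the $(L_n,G_m)$ and $(G_n,G_m)$ identities cannot repair this, because neither identity involves the $a_n$ at all: the only term in which they could appear is $[\varphi(L_n),\alpha(G_m)]$, which vanishes since $[G_{n+s},G_m]=0$. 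To settle $s=-1$ one must exploit the full two-variable relation from $(L_n,L_m)$, which for $m\neq n$ becomes $a_{n+m}=\frac{1+q^m}{2}\,a_n+\frac{1+q^n}{2}\,a_m$, and deduce (by suitable specializations and an induction, as the paper does in its Case $s=-1$) that $a_n=\{n\}\,a_1$, hence $\varphi=a_1\,ad_{G_{-1}}$. That step, which is the bulk of the paper's proof, is missing from your outline.
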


\begin{proof}
\begin{eqnarray}
\textrm{Let }&\varphi&\textrm{ be an odd $q$-derivation of degree }  s, \varphi (L_{n})=a_{s,n}G_{n+s}  \textrm{ and } \varphi (G_{n})=b_{s,n}L_{n+s}.\label{odd}
\end{eqnarray}
 \textbf{Case 1: $s\neq -1$}\\
%$$\varphi (L_{n})=a_{s,n}G_{n+s}$$
By (\ref{crochet1}) and (\ref{odd}), we have
\begin{eqnarray*}
% \nonumber to remove numbering (before each equation)
  \{n\}\varphi (L_{n}) &=&\varphi ([L_{0},L_{n}])\\
   &=&\frac{1}{1+q^{s+1}} \Big([\varphi(L_{0}),\alpha(L_{n})]+[\alpha(L_{0}),\varphi(L_{n})]\Big) \\
   &=&  \frac{1}{1+q^{s+1}} \Big([a_{s,0}G_{s},(1+q^{n})L_{n}]+[2L_{0},a_{s,n}G_{s+n})]\Big)\\
   &=&\frac{1+q^{n}}{1+q^{s+1}} (\{n\}-\{s+1\})a_{s,0}G_{n+s}+2 a_{s,n}\frac{1}{1+q^{s+1}}\{n+s+1\}G_{n+s}.
\end{eqnarray*}
 We deduce that, when $s\neq -1,\ a_{s,n}=\frac{q^{s+1}-q^{n}}{q^{s+1}-1}a_{s,0}.$ On the other hand,
\begin{eqnarray*}
-\frac{a_{s,0}}{\{s+1\}}ad_{G_{s}}(L_{n})&=&-\frac{a_{s,0}}{\{s+1\}}[G_{s},L_{n}]\\
&=&\frac{a_{s,0}}{\{s+1\}}(\{s+1\}-\{n\})G_{n+s}\\
&=&\frac{q^{n}-q^{s+1}}{1-q^{s+1}}a_{s,0}G_{n+s}\\
&=&a_{s,n}G_{n+s}.
\end{eqnarray*}
So $\varphi (L_{n})=-\frac{a_{s,0}}{\{s+1\}}ad_{G_{s}}(L_{n}).$\\

%$$\varphi (G_{n})=b_{s,n}L_{n+s}$$
By (\ref{crochet2}) and  (\ref{odd}), we have
\begin{eqnarray*}
% \nonumber to remove numbering (before each equation)
  \{n+1\}\varphi (G_{n}) &=&\varphi ([L_{0},G_{n}])\\
   &=&\frac{1}{1+q^{s+1}} \Big([\varphi(L_{0}),\alpha(G_{n})]+[\alpha(L_{0}),\varphi(G_{n})]\Big) \\
   &=&  \frac{1}{1+q^{s+1}} \Big([a_{s,0}G_{s},(1+q^{n+1})G_{n}]+[2L_{0},b_{s,n}L_{s+n})]\Big)\\
   &=& 2 b_{s,n}\frac{1}{1+q^{s+1}}\{n+s\}L_{n+s}.
\end{eqnarray*}
We deduce that  $ \{n+1\} b_{s,n}=2 b_{s,n}\frac{1}{1+q^{s+1}}\{n+s\}$ so $b_{s,n}=0.$ Moreover,
\begin{eqnarray*}
-\frac{a_{s,0}}{\{s+1\}}ad_{G_{s}}(G_{n})&=&-\frac{a_{s,0}}{\{s+1\}}[G_{s},G_{n}]\\
&=&0\\
&=&b_{s,n}G_{n+s}\\&=&\varphi (G_{n})
\end{eqnarray*}
which implies   in this case 
$\varphi =-\frac{a_{s,0}}{\{s+1\}}ad_{G_{s}}$.\\

%\textbf{reciprocal}
%Now suppose that $\varphi =\frac{a_{s,0}}{\{s+1\}}ad_{G_{s}}$ and show that $\varphi$ is a derivation

%\begin{eqnarray*}
%\varphi ([x,y])&=& \frac{a_{s,0}}{\{s+1\}}ad_{G_{s}}([x,y])\\
%&=& \frac{a_{s,0}}{\{s+1\}}[G_{s},[x,y] ] \\
%&=& \frac{a_{s,0}}{\{s+1\}}\frac{1}{1+q^{s+1}}[\alpha(G_{s}),[x,y] ]. \\
% &=&- \frac{a_{s,0}}{\{s+1\}}\frac{1}{1+q^{s+1}}\Big([\alpha(x),[y,G_{s}] ]+[\alpha(y),[x,G_{s}] ]\Big) \\
 %\end{eqnarray*}
%We discuss the parity of x and y, we find, in all cases $$ \varphi ([x,y]) =\frac{1}{1+q^{s+1}}\big([ \varphi (x),\alpha(y)]+(-1)^{1\times |x|}[\alpha (x),\varphi(y)]\big)$$\\

\textbf{Case 2 $s=-1$}\\
By (\ref{crochet1}) and (\ref{odd}), we have
\begin{eqnarray*}
% \nonumber to remove numbering (before each equation)
  (\{m\}-\{n\})\varphi (L_{m+n}) &=& \varphi ([L_{n},L_{m}]) \\
  &=&\frac{1}{2} \Big([\varphi(L_{n}),\alpha(L_{m})]+[\alpha(L_{n}),\varphi(L_{m})]\Big) \\
   &=&  \frac{1}{2} \Big([a_{-1,n}G_{n-1},(1+q^{m})L_{m}]+[(1+q^{n})L_{n},a_{-1,m}G_{m-1}]\Big)\\
   &=& - \frac{1+q^{m}}{2}a_{-1,n} (\{n\}-\{m\})G_{m+n-1}+ \frac{1+q^{n}}{2}a_{-1,m} (\{m\}-\{n\})G_{m+n-1}.
\end{eqnarray*}
Then
\begin{eqnarray*}
 (\{m\}-\{n\})a_{-1,n+m}&=&- \frac{1+q^{m}}{2}a_{-1,n} (\{n\}-\{m\})+ \frac{1+q^{n}}{2}a_{-1,m} (\{m\}-\{n\}).
\end{eqnarray*}
So for $m \neq n$ we have

\begin{eqnarray}
a_{-1,n+m}&=&\frac{1+q^{m}}{2}a_{-1,n}+\frac{1+q^{n}}{2}a_{-1,m}. \label{derwitt1}
\end{eqnarray}
Letting $m = 0$\ in (\ref{derwitt1}), we obtain $\ a_{-1,0}=0. $\\
Letting $\  m = 1\ ,\ n=4$  in (\ref{derwitt1}), then
\begin{eqnarray}
a_{-1,5}&=&\frac{1+q}{2}a_{-1,4}+\frac{1+q^{4}}{2}a_{-1,1}. \label{derwitt2}
\end{eqnarray}
Letting $\  m = 1\ ,\ n=3$  in (\ref{derwitt1}), we obtain
\begin{eqnarray}
a_{-1,4}&=&\frac{1+q}{2}a_{-1,3}+\frac{1+q^{3}}{2}a_{-1,1}. \label{derwitt3}
\end{eqnarray}

Letting $\  m = 1\ ,\ n=2$  in (\ref{derwitt1}), we obtain
\begin{eqnarray}
a_{-1,3}&=&\frac{1+q}{2}a_{-1,2}+\frac{1+q^{2}}{2}a_{-1,1}. \label{derwitt4}
\end{eqnarray}
We deduce that

\begin{eqnarray}
 a_{-1,5}&=&\Big(\frac{1+q^{4}}{2}+\frac{1+q}{2}\frac{1+q^{3}}{2}+(\frac{1+q}{2})^{2}\frac{1+q^{2}}{2}\Big)a_{-1,1}
 +(\frac{1+q}{2})^{3}a_{-1,2}. \label{derwitt5}
 \end{eqnarray}
 Now, Letting $\  m = 2\ ,\ n=3$  in (\ref{derwitt1}), we obtain
 \begin{eqnarray}
  a_{-1,5}&=&\frac{1+q^{2}}{2}a_{-1,3}+\frac{1+q^{3}}{2}a_{-1,2}.\label{derwitt6}
  \end{eqnarray}

 By (\ref{derwitt4}) and (\ref{derwitt6}), we deduce that

 \begin{eqnarray}
 a_{-1,5}&=&(\frac{1+q^{2}}{2})^{2}a_{-1,1}+\Big(\frac{1+q}{2}\frac{1+q^{2}}{2}+\frac{1+q^{3}}{2}\Big)
a_{-1,2}.\label{derwitt7}
\end{eqnarray}

 Then, we deduce (by (\ref{derwitt5}) and (\ref{derwitt7})) that $a_{-1,2}=(1+q)a_{-1,1}=\{2\}a_{-1,1}.$\\
 Letting $\  m = 1\ $  in (\ref{derwitt1}), we obtain $a_{-1,n+1}=\frac{1+q^{1}}{2}a_{-1,n}+\frac{1+q^{n}}{2}a_{-1,1},$ and  by induction, we can show that
$a_{-1,n}=\{n\}a_{-1,1}.$ \\

So, $\varphi (L_{n})=\{n\}a_{-1,1}G_{n-1}
=a_{-1,1}[G_{-1},L_{n}],$ therefore
 \begin{eqnarray}
\varphi (L_{n})&=a_{-1,1}ad_{G_{-1}}(L_{n}).\label{derwithN}
\end{eqnarray}
Now, we calculate $\varphi (G_{n}):$
%$$\varphi (G_{n})=b_{-1,n}L_{n-1}$$
by (\ref{crochet2}) and (\ref{odd})
we have
\begin{eqnarray*}
% \nonumber to remove numbering (before each equation)
(\{m+1\}-\{n\}) \varphi (G_{n+m})
   &=&\frac{1}{2} \Big([\varphi(L_{n}),\alpha(G_{m})]+[\alpha(L_{n}),\varphi(G_{m})]\Big) \\
   &=&  \frac{1}{2} \Big([a_{-1,n}G_{n-1},(1+q^{m+1})G_{m}]+[(1+q^{n})L_{n},b_{-1,m}L_{m-1})]\Big)\\
   &=&  b_{-1,m}\frac{1+q^{n}}{2} (\{m-1\}-\{n\})L_{m+n-1}.
\end{eqnarray*}
We  deduce that
\begin{eqnarray*}
(\{m+1\}-\{n\})b_{-1,m+n}&=& b_{-1,m}\frac{1+q^{n}}{2} (\{m-1\}-\{n\}).
\end{eqnarray*}

So for $m +1\neq n$ we have

\begin{eqnarray}
b_{-1,n+m}&=&\frac{1+q^{n}}{2}\frac{q^{n}-q^{m-1}}{q^{n}-q^{m+1}}b_{-1,m}.\label{derwitt8}
\end{eqnarray}
Letting  $m = 0$\ in (\ref{derwitt8}) (so   $n\neq 1$), \ we obtain

\begin{eqnarray}
b_{-1,n}&=&\frac{1+q^{n}}{2}\frac{q^{n}-q^{-1}}{q^{n}-q}b_{-1,0}.\label{derwitta}
\end{eqnarray}
So
\begin{eqnarray}
b_{-1,n+m}&=&\frac{1+q^{n+m}}{2}\frac{q^{n+m}-q^{-1}}{q^{n+m}-q}b_{-1,0}\label{derwitt9}
\end{eqnarray}
and
\begin{eqnarray}
b_{-1,m}&=&\frac{1+q^{m}}{2}\frac{q^{m}-q^{-1}}{q^{m}-q}b_{-1,0}.\label{derwittb}
\end{eqnarray}

By (\ref{derwitt8}) and (\ref{derwitt9}),   we have
\begin{eqnarray*}
\frac{1+q^{n+m}}{2}\frac{q^{n+m}-q^{-1}}{q^{n+m}-q}b_{-1,0}&=&\frac{1+q^{n}}{2}
\frac{q^{n}-q^{m-1}}{q^{n}-q^{m+1}}b_{-1,m}.
\end{eqnarray*}
If 
we replace $b_{-1,m}$ with its value given in (\ref{derwittb}), we obtain
\begin{eqnarray}
\frac{1+q^{n+m}}{2}\frac{q^{n+m}-q^{-1}}{q^{n+m}-q}b_{-1,0}
&=& \frac{1+q^{n}}{2}\frac{q^{n}-q^{m-1}}{q^{n}-q^{m+1}}\frac{1+q^{m}}{2}\frac{q^{m}
-q^{-1}}{q^{m}-q} b_{-1,0}.\label{derwitt10}
\end{eqnarray}
Letting $n=2,\ m=-3$ in (\ref{derwitt10}), we obtain $b_{-1,0}=0.$ By (\ref{derwitta}) we deduce that $b_{-1,n}=0,\ forall\  n\neq 1.$\\
Letting $m=1$ in (\ref{derwitt8}), we obtain
\begin{eqnarray*}
b_{-1,n+1}=\frac{1+q^n}{2}\frac{q^n-1}{q^n-q^{2}}b_{-1,1}.
\end{eqnarray*}

 We deduce that
\begin{eqnarray}
b_{-1,4}=\frac{1+q^3}{2}\frac{q^3-1}{q^3-q^{2}}b_{-1,1}.
\end{eqnarray}
So
 $b_{-1,1}=0.$

Since $b_{-1,n}=0\ forall\  n\neq 1$ and $b_{-1,1}=0,$ then $\varphi (G_{n})=0,\ forall\  n\in \mathbb{Z}.$

Since
\begin{eqnarray*}
\varphi (G_{n})&=&0
=a_{-1,1}[G_{-1},G_{n}],
\end{eqnarray*}
then
\begin{eqnarray}
\varphi (G_{n})&=&a_{-1,1}ad_{G_{-1}}(G_{n}).\label{derwith}
\end{eqnarray}
By (\ref{derwith}) and (\ref{derwithN}),
we deduce that $$\varphi =a_{-1,1}\ ad_{G_{-1}}.$$
\end{proof}

%%%%%%%%%%%%%%%%%%%%%%%%%%%%%%%%%%%%%%%%%%%%%%%%%%%
%%%%%%%%%%%%%%%%%%%%%%%%%%%%%%%%%%%%%%%%%%%%%%%%%%%%%%%%%%%%%%%%%%%%%%%%%%%%%%%%%%%%%%%%%%
%%%%%%%%%%%%%%%%%%%%%%%%%%%%
\begin{prop}
If $\varphi$ is an even $q$-derivation
 of degree $s$ then it is an inner
derivation, more precisely:$$\displaystyle (Der \mathcal{W}^{q} )_{0}=\oplus_{s\in \mathbb{Z}}\langle ad_{L_{s}}\rangle. $$
\end{prop}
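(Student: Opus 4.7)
The plan is to mimic closely the structure of the proof for odd $q$-derivations, using the defining identity \eqref{derwit1} applied to carefully chosen brackets in order to pin down all coefficients, and then to recognise the answer as a scalar multiple of $ad_{L_{s}}$. Write an arbitrary homogeneous even $q$-derivation of degree $s$ in the form
\begin{equation*}
\varphi(L_{n})=a_{s,n}L_{n+s},\qquad \varphi(G_{n})=b_{s,n}G_{n+s},
\end{equation*}
so that the whole problem reduces to determining the scalar sequences $(a_{s,n})_{n\in\Z}$ and $(b_{s,n})_{n\in\Z}$. Recall $ad_{L_{s}}(L_{n})=(\{n\}-\{s\})L_{n+s}$ and $ad_{L_{s}}(G_{n})=(\{n+1\}-\{s\})G_{n+s}$, so the proposition amounts to showing that $a_{s,n}$ and $b_{s,n}$ are both proportional, with the same constant of proportionality, to these explicit sequences.

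First I would treat the case $s\neq 0$. Evaluating $\varphi$ on the identity $\{n\}L_{n}=[L_{0},L_{n}]$ via \eqref{derwit1} and using $\alpha(L_{0})=2L_{0}$, $\alpha(L_{n})=(1+q^{n})L_{n}$ gives
\begin{equation*}
\{n\}\,a_{s,n}=\tfrac{1+q^{n}}{1+q^{s}}(\{n\}-\{s\})\,a_{s,0}+\tfrac{2}{1+q^{s}}\{n+s\}\,a_{s,n}.
\end{equation*}
Solving this linear equation (a short simplification collapses $\{n\}(1+q^{s})-2\{n+s\}$ to $-(1+q^{n})\{s\}$) yields $a_{s,n}=-\tfrac{a_{s,0}}{\{s\}}(\{n\}-\{s\})$, i.e.\ $\varphi(L_{n})=-\tfrac{a_{s,0}}{\{s\}}\,ad_{L_{s}}(L_{n})$. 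An entirely parallel computation, starting from $\{n+1\}G_{n}=[L_{0},G_{n}]$, determines $b_{s,n}$ and, after an easy cross-check using $\varphi([L_{n},G_{m}])$ to link the two scalars, shows that the same constant $-a_{s,0}/\{s\}$ appears, whence $\varphi=-\tfrac{a_{s,0}}{\{s\}}\,ad_{L_{s}}$.

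Second, for the exceptional case $s=0$ the argument above degenerates (the coefficient of $a_{0,n}$ vanishes and one only learns $a_{0,0}=0$), so one has to replay the same inductive scheme used in the proof for odd derivations at $s=-1$. Concretely, I would use $\varphi([L_{n},L_{m}])$ with varying $n,m$ (in particular, the pairs $(1,n)$, $(2,3)$, $(1,4)$, $(1,3)$, $(1,2)$) to obtain the recursion
\begin{equation*}
a_{0,n+m}=\tfrac{1+q^{m}}{2}\,a_{0,n}+\tfrac{1+q^{n}}{2}\,a_{0,m}\qquad (n\neq m),
\end{equation*}
deduce $a_{0,0}=0$ and $a_{0,2}=\{2\}\,a_{0,1}$, then show by induction on $n$ that $a_{0,n}=\{n\}\,a_{0,1}$, which is exactly $a_{0,1}\,ad_{L_{0}}(L_{n})$. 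A completely analogous recursion for $b_{0,n}$ from $\varphi([L_{n},G_{m}])$, together with a compatibility equation coming from $\varphi([L_{0},G_{n}])$, forces the same constant $a_{0,1}$; hence $\varphi=a_{0,1}\,ad_{L_{0}}$.

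The main obstacle, as in the odd case, will be the degenerate value $s=0$: the single relation coming from $[L_{0},L_{n}]$ no longer determines the sequence, so one has to extract enough independent identities from $[L_{n},L_{m}]$ and chase a small inductive argument in order to fix the whole sequence $(a_{0,n})$ from only $a_{0,1}$, and similarly for $(b_{0,n})$. Once that is done, summing over degrees gives the decomposition $(Der\,\mathcal{W}^{q})_{0}=\oplus_{s\in\Z}\langle ad_{L_{s}}\rangle$ claimed in the proposition.
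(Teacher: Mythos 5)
Your proposal is correct and follows essentially the same route as the paper: apply the $q$-derivation identity \eqref{derwit1} to $[L_0,L_n]$ and $[L_0,G_n]$ for $s\neq 0$ to get $\varphi=-\frac{a_{s,0}}{\{s\}}\,ad_{L_s}$, and for $s=0$ derive the recursions $a_{0,n+m}=\frac{1+q^m}{2}a_{0,n}+\frac{1+q^n}{2}a_{0,m}$ and its counterpart for $b_{0,n}$ from $[L_n,L_m]$ and $[L_n,G_m]$, concluding $\varphi=a_{0,1}\,ad_{L_0}$. The only cosmetic difference is that for $s\neq 0$ no separate cross-check is needed, since the $[L_0,G_n]$ relation already couples $b_{s,n}$ to $a_{s,0}$ through $\varphi(L_0)=a_{s,0}L_s$ (and at $s=0$ the $[L_0,G_n]$ relation is vacuous, the coupling coming instead from the $a_{0,n}$ term in the $[L_n,G_m]$ recursion, exactly as in the paper).
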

\begin{proof}
\begin{eqnarray}
\textrm {Let } \varphi \textrm { be an even $q$-derivation
 of degree} \ s, \ \varphi (L_{n})=a_{s,n}L_{n+s}, \textrm { and } \varphi (G_{n})=b_{s,n}G_{n+s}.\label{eveder1}
\end{eqnarray}

 \textbf{Case 1: $s\neq 0$}\\
 By (\ref{crochet1}) and (\ref{eveder1}), we have
\begin{eqnarray*}
% \nonumber to remove numbering (before each equation)
  \{n\}\varphi (L_{n}) &=&\varphi ([L_{0},L_{n}])\\
   &=&\frac{1}{1+q^{s}} \Big([\varphi(L_{0}),\alpha(L_{n})]+[\alpha(L_{0}),\varphi(L_{n})]\Big) \\
   &=&  \frac{1}{1+q^{s}} \Big([a_{s,0}L_{s},(1+q^{n})L_{n}]+[2L_{0},a_{s,n}L_{s+n})]\Big)\\
   &=&\frac{1+q^{n}}{1+q^{s}} (\{n\}-\{s\})a_{s,0}L_{n+s}+2 a_{s,n}\frac{1}{1+q^{s}}\{n+s\}L_{n+s}.
\end{eqnarray*}
 We deduce that, when $s\neq 0,\ a_{s,n}=\frac{q^{s}-q^{n}}{q^{s}-1}a_{s,0}.$ Moreover,

\begin{eqnarray*}
-\frac{a_{s,0}}{\{s\}}ad_{L_{s}}(L_{n})&=&-\frac{a_{s,0}}{\{s\}}[L_{s},L_{n}]\\
&=&-\frac{a_{s,0}}{\{s\}}(\{n\}-\{s\})L_{n+s}\\
&=&-\frac{q^{s}-q^{n}}{1-q^{s}}a_{s,0}L_{n+s}\\
&=&a_{s,n}L_{n+s}.\\
\end{eqnarray*}
So
\begin{eqnarray}
 \varphi (L_{n})&=&-\frac{a_{s,0}}{\{s\}}ad_{L_{s}}(L_{n}).\label{evender2}\\ \nonumber
\end{eqnarray}

Applying the same relations (\ref{crochet1}) and (\ref{eveder1}), we obtain
\begin{eqnarray*}
% \nonumber to remove numbering (before each equation)
  \{n+1\}\varphi (G_{n}) &=&\varphi ([L_{0},G_{n}])\\
   &=&\frac{1}{1+q^{s}} \Big([\varphi(L_{0}),\alpha(G_{n})]+[\alpha(L_{0}),\varphi(G_{n})]\Big) \\
   &=&  \frac{1}{1+q^{s}} \Big([a_{s,0}L_{s},(1+q^{n+1})G_{n}]+[2L_{0},b_{s,n}G_{s+n})]\Big)\\
   &=&\frac{1+q^{n+1}}{1+q^{s}}a_{s,0}(\{n+1\}-\{s\}) L_{n+s}+2 b_{s,n}\frac{1}{1+q^{s}}\{n+s+1\}L_{n+s}.
\end{eqnarray*}
We deduce that  $  b_{s,n}=a_{s,0}\frac{q^{s}-q^{n+1}}{q^{s}-1}$. On the other hand,
\begin{eqnarray*}
-\frac{a_{s,0}}{\{s\}}ad_{L_{s}}(G_{n})&=&-\frac{a_{s,0}}{\{s\}}[L_{s},G_{n}]\\
&=&-\frac{a_{s,0}}{\{s\}}(\{n+1\}-\{s\})G_{n+s}\\
&=&a_{s,0}\frac{q^{s}-q^{n+1}}{q^{s}-1}G_{n+s}\\
&=&b_{s,n}G_{n+s}.
\end{eqnarray*}
So
\begin{eqnarray}
\varphi (G_{n})&=&-\frac{a_{s,0}}{\{s\}}ad_{L_{s}}(G_{n}).\label{evender3}
\end{eqnarray}
Using (\ref{evender2}) and (\ref{evender3}), we deduce that
\textbf{$\varphi =-\frac{a_{s,0}}{\{s\}}ad_{L_{s}}$}.\\

\textbf{Case 2 $s=0$}\\ By  (\ref{crochet1}) and (\ref{eveder1}), we have
\begin{eqnarray*}
% \nonumber to remove numbering (before each equation)
  (\{m\}-\{n\})\varphi (L_{m+n}) &=& \varphi ([L_{n},L_{m}]) \\
  &=&\frac{1}{2} \Big([\varphi(L_{n}),\alpha(L_{m})]+[\alpha(L_{n}),\varphi(L_{m})]\Big) \\
   &=&  \frac{1}{2} \Big([a_{0,n}L_{n},(1+q^{m})L_{m}]+[(1+q^{n})L_{n},a_{0,m}L_{m})]\Big)\\
   &=& a_{0,n} \frac{1+q^{m}}{2} (\{m\}-\{n\})L_{m+n}+a_{0,m} \frac{1+q^{n}}{2} (\{m\}-\{n\})L_{m+n}.
\end{eqnarray*}
This implies that $a_{0,m+n}(\{m\}-\{n\})=a_{0,n} \frac{1+q^{m}}{2} (\{m\}-\{n\})+a_{0,m} \frac{1+q^{n}}{2} (\{m\}-\{n\}).$\\
So  for $m \neq n$, we have
\begin{eqnarray}
a_{0,n+m}&=&\frac{1+q^{m}}{2}a_{0,n}+\frac{1+q^{n}}{2}a_{0,m}.\label{even1}
\end{eqnarray}

Letting $m = 0$\ in (\ref{even1}), we obtain $\ a_{0,0}=0. $\\
Letting $m = 1$\ in (\ref{even1}), we obtain $a_{0,n+1}=\frac{1+q}{2}a_{0,n}+\frac{1+q^{n}}{2}a_{0,1}.$ By induction, we prove that
$ a_{0,n}=\{n\}a_{0,1}.$ So $\varphi(L_{n})=\{n\}a_{0,1}L_{n},$ that is
\begin{eqnarray*}
\varphi(L_{n})&=&\{n\}a_{0,1}L_{n}
=a_{0,1}[L_{0},L_{n}].
\end{eqnarray*}
Which leads to
\begin{eqnarray}
\varphi(L_{n})&=&a_{0,1}ad_{L_{0}}(L_{n}). \label{evender4}
\end{eqnarray}
%\textbf{Fin Modification 17 fevrier}\\
%%%%%%%%%%%%%%%
%%%%%%%%%%%
%%%%%%%%%%%%%%%%%%%%%
By (\ref{crochet2}) and (\ref{eveder1}), we have
\begin{eqnarray*}
% \nonumber to remove numbering (before each equation)
  (\{m+1\}-\{n\})\varphi (G_{m+n}) &=&\varphi ([L_{n},G_{m}])\\
   &=&\frac{1}{2} \Big([\varphi(L_{n}),\alpha(G_{m})]+[\alpha(L_{n}),\varphi(G_{m})]\Big) \\
   &=&  \frac{1}{2} \Big([a_{0,n}L_{n},(1+q^{m+1})G_{m}]+[(1+q^{n})L_{n},b_{0,m}G_{m})]\Big)\\
   &=&a_{0,n}(\{m+1\}-\{n\}) \frac{1+q^{m+1}}{2}G_{m+n} +b_{0,m}\frac{1+q^{n}}{2} (\{m+1\}-\{n\})G_{m+n}.
\end{eqnarray*}
We deduce that $b_{0,m+n}=a_{0,n}(\{m+1\}-\{n\}) \frac{1+q^{m+1}}{2} +b_{0,m}\frac{1+q^{n}}{2} (\{m+1\}-\{n\}).$\\
%%%mercredi
So, for $m +1\neq n$, it holds that
\begin{eqnarray}
b_{0,n+m}&=&a_{0,n}\frac{1+q^{m+1}}{2}+b_{0,m}\frac{1+q^{n}}{2}.\label{even2}
\end{eqnarray}
Taking $m = 0,$ in (\ref{even2}), we obtain $b_{0,n}=a_{0,n}\frac{1+q}{2}+b_{0,0}\frac{1+q^{n}}{2}.$  Since $a_{0,n}=a_{0,1}\{n\},$ then
\begin{eqnarray}
\ b_{0,n}&=&a_{0,1}\{n\}\frac{1+q}{2}+b_{0,0}\frac{1+q^{n}}{2}.\label{even3}
\end{eqnarray}
Taking $m = 1,$ $n=-1 $ in (\ref{even2}) and $n=1$ in $(\ref{even3})$, we obtain $b_{0,0}=a_{0,1}.$ So  using (\ref{even3}), we get
\begin{eqnarray*}
\ b_{0,n}&=a_{0,1}&\{n\}\frac{1+q}{2}+b_{0,0}\frac{1+q^{n}}{2}\\
&=&a_{0,1}\frac{1-q^{n}}{1-q}\frac{1+q}{2}+a_{0,1}\frac{1+q^{n}}{2}\\
&=&a_{0,1}\{n+1\}.
\end{eqnarray*}
 Then
\begin{eqnarray*}
\varphi (G_{n})&=&b_{0,n}G_{n}
=a_{0,1}\{n+1\}G_{n}
=a_{0,1}[L_{0},G_{n}].
\end{eqnarray*} Therefore
\begin{eqnarray}
\varphi (G_{n})&=&a_{0,1} \ ad_{L_{0}}(G_{n}).\label{evender5}
\end{eqnarray}
 By (\ref{evender4}) and (\ref{evender5}), we deduce that
\textbf{$\varphi =a_{0,1} \ ad_{L_{0}}$}.\\
\end{proof}

%%%%%%%%%%%%%%%%%%%%%%%%%%%%%%%%%%%%%%%%%%%%%%%%%%%%%%%%%%%%%%%%%%MMMMMMMMMMMMMMMMMMMMMMMMMMMMMMMMMMMM%%%%%%%%%%%%%%%%%%%%%%%%%%
\subsection{Cohomology space $H_{r,0}^{2}(\mathcal{W}^{q})$ of  $\mathcal{W}^{q}$}
%Recall that the cohomology space $H_{0,r}^{2}(\mathcal{W}^{q})$ is the quotient space of $\ker \delta ^{2}$ by $Im\delta ^{1}$. We denote by $[f]$ the equivalence class of an element $f$ of $\ker \delta ^{2}$ . 
In the  following we describe the  cohomology space $H_{0}^{2}(\mathcal{W}^{q},\mathbb{C})$. We denote by $[f]$ the cohomology class of an element $f$.

\begin{thm}
$$H_{r,0}^{2}(\mathcal{W}^{q})=\mathbb{C}[\varphi_{1}]\oplus\mathbb{C}[\varphi_{2}],
$$ where
\begin{eqnarray*}
%\varphi_{1}(xL_{n}+yG_{m},zL_{p}+tG_{k})&=&xza_{n}\delta_{n+p,0},\\
\varphi_{1}(xL_{n}+yG_{m},zL_{p}+tG_{k})&=&xzb_{n}\delta_{n+p,0},\\
%\varphi_{3}(xL_{n}+yG_{m},zL_{p}+tG_{k})&=&xta_{n}\delta_{n+k,-1},\\
\varphi_{2}(xL_{n}+yG_{m},zL_{p}+tG_{k})&=&xtb_{n}\delta_{n+k,-1}-yz b_{p}\delta_{p+m,-1},\\
%\varphi_{5}(xL_{n}+yG_{m},zL_{p}+tG_{k})&=&yte_{m}\delta_{m+k,-1},\\
\end{eqnarray*}
with
\begin{eqnarray*}
%a_{n}&=&\frac{1+q}{1+q^{n}}\Big( \frac{1-q^{2n-1}}{q^{n-1}-q} + \frac{(q^{n+1}-1)(1-q^{2n-3})}{q^{2}(q^{n-2}-1)(q^{n-3}-1)}
  % +\frac{(q^{n+1}-1)(q^{n}-1)(1-q^{2n-5})}{q^{3}(q^{n-2}-1)(q^{n-3}-1)(q^{n-4}-1)}\\
   % &&+\frac{(q^{n+1}-1)(q^{n}-1)(q^{n-1}-1)}{(q^{n-2}-1)(q^{n-4}-1)q^{3}(q-1)}
   % -\frac{(q^{n+1}-1)(q^{n}-1)(q^{n-1}-1)}{q^{n-2}(q-1)^{2}(q^{3}-1)}\Big)\\
b_{n}&=&\frac{1}{q^{n-2}}\frac{1+q^{2}}{1+q^{n}}\frac{(1-q^{n+1})(1-q^{n})(1-q^{n-1})}{(1-q^{3})(1-q^{2})(1-q)}.
\end{eqnarray*}

\end{thm}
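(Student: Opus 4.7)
The plan is to determine all even scalar $2$-cocycles of $\mathcal{W}^q$ explicitly and then mod out by coboundaries. Since $V=\mathbb{C}$ carries the trivial action, the cocycle condition coming from \eqref{def cob} reduces for a $2$-cochain $f$ to
$$-f([x_0,x_1],\alpha(x_2))+(-1)^{|x_1||x_2|}f([x_0,x_2],\alpha(x_1))+f(\alpha(x_0),[x_1,x_2])=0,$$
and the coboundary of a $1$-cochain $g$ is $(x,y)\mapsto -g([x,y])$. I would set $A_{n,m}:=f(L_n,L_m)$, $B_{n,m}:=f(L_n,G_m)$, $C_{n,m}:=f(G_n,G_m)$, noting $A_{n,m}=-A_{m,n}$ and $C_{n,m}=C_{m,n}$. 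Because $[G,G]=0$, and $[L,L]$, $[L,G]$ land respectively in the $L$-span and the $G$-span, the cocycle condition on the four possible parity types of triples decouples: $(L,L,L)$ constrains only $A$, $(L,L,G)$ only $B$, $(L,G,G)$ only $C$, and $(G,G,G)$ is automatic. So the three systems can be analyzed separately.

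The $(L,L,L)$ identity reads
$$(1+q^p)(\{n\}-\{m\})A_{n+m,p}+(1+q^m)(\{p\}-\{n\})A_{n+p,m}+(1+q^n)(\{p\}-\{m\})A_{n,m+p}=0.$$
Setting $p=0$ eliminates the middle term and yields a relation expressing the off-diagonal values $A_{k,0}$ in terms of $A_{n,m}$ with $n+m=k$; subtracting an appropriate coboundary $-g\circ[\cdot,\cdot]$ with $g(L_k)$ suitably chosen then normalizes $A_{n,m}=0$ whenever $n+m\neq 0$. Writing $A_{n,-n}=\psi_n$, super-skew-symmetry forces $\psi_{-n}=-\psi_n$, and the remaining identities with $n+m+p=0$ reduce to a cyclic $3$-term recursion on $\psi_n$. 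Fixing $m=1$ and $p=-n-1$ solves this recursion inductively up to a single scalar, and direct substitution will verify that the announced $b_n$ satisfies the relation; this produces the generator $\varphi_1$. The $(L,L,G)$ equation has the same form up to the shifts $\{p\}\to\{p+1\}$ and $(1+q^p)\to(1+q^{p+1})$ in the position of $G$, so the same strategy (normalize by coboundaries to force $B_{n,m}=0$ for $n+m\neq -1$ and set $\phi_n:=B_{n,-n-1}$) leads to $\phi_n=\mu\, b_n$, yielding $\varphi_2$. Finally, $(L,G,G)$ is homogeneous in $C$ without an analogous mechanism producing nontrivial solutions: specializing indices (for example, $n=0$, making one coefficient $\{n\}$ vanish) forces $C\equiv 0$ for generic $q$.

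The main obstacle is the explicit verification that the closed form
$$b_n=\frac{1}{q^{n-2}}\cdot\frac{1+q^2}{1+q^n}\cdot\frac{(1-q^{n+1})(1-q^n)(1-q^{n-1})}{(1-q^3)(1-q^2)(1-q)}$$
solves the cyclic $3$-term recursion on $\psi_n$; this requires careful manipulation of $q$-numbers, keeping track of the $(1+q^n)$ factors coming from $\alpha$. A useful sanity check is the identity $b_{-n}=-b_n$, which matches the super-skew-symmetry constraint $\psi_{-n}=-\psi_n$. Once this algebraic step is in place, the same argument with the shift $n\mapsto n+1$ gives the recursion for $\phi_n$, and the coboundary normalization from Step~2 finishes the decomposition $H_{r,0}^2(\mathcal{W}^q)\cong \mathbb{C}[\varphi_1]\oplus \mathbb{C}[\varphi_2]$ claimed in the statement.
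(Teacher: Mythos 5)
Your overall strategy is the paper's strategy: the same three decoupled families of cocycle identities, the same specialization ($p=0$, resp.\ $m=0$) expressing the off-diagonal values through $f(L_0,L_{n+m})$, the same normalization by a coboundary, and the same first-order recursion producing $b_n$. The genuine problem is your claim that fixing $m=1$, $p=-n-1$ determines the diagonal values $\psi_n=A_{n,-n}$ ``up to a single scalar''. It does not: in that recursion the coefficient of $\psi_{n+1}$ is $(1+q^{-n-1})(\{n\}-\{1\})$, which vanishes at $n=1$, so $\psi_2$ is a second free parameter and the general solution is $\psi_n=a_n\psi_1+b_n\psi_2$ --- exactly the paper's $f(L_n,L_{-n})=a_nf(L_1,L_{-1})+b_nf(L_2,L_{-2})$. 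The two-dimensionality is unavoidable: the $1$-cochain $g$ with $g(L_0)=1$ and all other values zero has coboundary supported on the diagonal, $\delta^1 g(L_n,L_{-n})=\{n\}-\{-n\}$ up to sign, which solves all the diagonal identities and is nonzero at $n=1$, whereas $b_1=0$; so it is a solution independent of $(b_n)$. Your off-diagonal normalization only uses $g(L_k)$ for $k\neq 0$ and leaves the diagonal untouched, so after it $\psi_1$ is still arbitrary. To land on $\mathbb{C}[\varphi_1]$ you must spend the residual freedom $g(L_0)$ (and, in the $L$--$G$ sector, $g(G_{-1})$) to kill $\psi_1$ (resp.\ $\phi_1=B_{1,-2}$), which is precisely what the paper does by choosing $g(L_0)$ and $g(G_{-1})$ proportional to $f(L_1,L_{-1})$ and $f(L_1,G_{-2})$. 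Without this step your argument either asserts a false statement about the recursion or, if corrected to two parameters, stops one step short of the stated answer; note also that identifying the diagonal coboundaries as the $\{n\}-\{-n\}$ family is what shows that no nontrivial combination of $\varphi_1,\varphi_2$ is exact, a point left implicit in both your sketch and the paper.

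A second, smaller gap is in the $(L,G,G)$ sector: setting $n=0$ and using $C_{m,p}=C_{p,m}$ gives $2\{m+p+2\}\,C_{m,p}=0$, which kills $C$ only off the line $m+p=-2$; the remaining values $C_{m,-2-m}$ require a further specialization (the paper instead sets $m=0$, then $n=1$, handles its exceptional line $n+p=1$, and finishes with the symmetry of $f$ on odd arguments). This is routine but must be carried out. Apart from these two points your route coincides with the paper's, including the check $b_{-n}=-b_n$ and the transfer of the $L$--$L$ recursion to the $L$--$G$ sector via the shift of $q$-integers.
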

%%%%%%%%%%%%%%%%%%%%%%%%%%%%%%%%%%%%%%%%%%%%%%%%%999999999999999999999999999999999999°°°°°°°°°°°°°°°°°°°°°°°°°°°°°°°°°¨¨¨¨¨¨¨¨¨¨¨¨¨¨¨¨¨¨¨¨¨¨¨¨¨¨
%%%%%%%%%%%%%%%%%%%%££££££££££££££££££££££££++++++++++++++++++++++µµµµµµµµµµµµµµµµµµµµµµµµµµµµµµµµµµµµµ
\begin{proof}
%Remember that if $c$ belongs to $C_{\alpha,\beta}^{q}(\mathcal{G},V)$  then $\delta(c)$ is in $C_{\alpha,\beta}^{q+1}(\mathcal{G},V)$.

For all $f\in C_{\alpha,Id_{\mathbb{C}}}^{2}(\mathcal{W}^{q},\mathbb{C}),$  we have (see (\ref{def cob}))
\begin{eqnarray}
\delta(f)(x_{0},x_{1},x_{2})=-f([x_{0},x_{1}],\alpha(x_{2}))+(-1)^{|x_{2}||x_{1}|}f([x_{0},x_{2}],\alpha(x_{1}))+f(\alpha(x_{0}),[x_{1},x_{2}]) .\ \ \ \ \, \label{ddd}
\end{eqnarray}
Now, suppose that $f$ is a $q$-deformed 2-cocycle on $\mathcal{W}^q.$  From (\ref{ddd}), we obtain
\begin{eqnarray}
-f([x_{0},x_{1}],\alpha(x_{2}))+(-1)^{|x_{2}||x_{1}|}f([x_{0},x_{2}],\alpha(x_{1}))+f(\alpha(x_{0}),[x_{1},x_{2}])&=&0 .\label{ddd1}
\end{eqnarray}
By (\ref{ddd1})  and
taking the triple $(x, y, z)$ to be $(L_{n}, L_{m}, L_{p}),\ (L_{n}, L_{m},G_{p}),\  and \ (L_{n},G_{m} ,G_{p})$, respectively, we obtain $f(L_{n}, L_{p}),\ f(L_{n},G_{p})$ and $f(G_{n}, G_{p})$ which define $f.$

\textbf{Case 1}: $X=(L_{n},L_{m},L_{p})$\\
Using (\ref{ddd1}), we have
\begin{eqnarray*}
-f([L_{n}, L_{m}],\alpha(L_{p}))+f([L_{n}, L_{p}],\alpha(L_{m}))+f(\alpha(L_{n}),[L_{m}, L_{p}])&=&0 .
\end{eqnarray*}
Since $[L_{m},L_{p}]=(\{p\}-\{m\}) L_{m+p}$ , and $\alpha(L_{n})=(1+q^{n})L_{n}$   then
%\begin{footnotesize}
\begin{eqnarray}
-(1+q^{p})(\{m\}-\{n\})f(L_{n+m},L_{p})+ (1+q^{m})(\{p\}-\{n\})f(L_{n+p},L_{m})\nonumber\\+(1+q^{n})(\{p\}-\{m\})f(L_{n},L_{m+p})=0.\label{homm1}
\end{eqnarray}
%\end{footnotesize}

Setting $m=0$, in (\ref{homm1}), we obtain $\displaystyle f(L_{n},L_{p})=\frac{q^{n}-q^{p}}{1-q^{n+p}} f(L_{0},L_{n+p})\ \  (n+p\neq 0).$\\
%\begin{eqnarray*}
%&&\delta_{c}(L_{n},\ L_{0},\ L_{p})\\
%&=&\circlearrowleft_{n,m,p} \delta_{c}(c'(\alpha(L_{n}),[L_{m},L_{p}])\\
 % &=&(1+q^{n})(\{p\})c(L_{n},L_{p})+2(\{n\}-\{p\})c(L_{0},L_{n+p})+(1+q^{p})(-\{n\})c(L_{p},L_{n})\\
 % &=&\Big((1+q^{n})(\{p\})+(1+q^{p})\{n\}\Big)c(L_{n},L_{p})+2(\{n\}-\{p\})c(L_{0},L_{n+p})\\
%\end{eqnarray*}
Setting $m=0,\ n=-p$, in (\ref{homm1}), then we obtain $ f(L_{0},L_{0})=0.$\\
Setting $m=-n-p$  in (\ref{homm1}), we obtain
%\begin{footnotesize}
\begin{eqnarray}
-(1+q^{p})(q^{n}-q^{-n-p})f(L_{-p},L_{p})+(1+q^{-n-p})(q^{n}-q^{p})f(L_{n+p},L_{-n-p})\nonumber \\ +(1+q^{n})(q^{-n-p}-q^{p})f(L_{n},L_{-n})=0.
\label{homm2}
\end{eqnarray}
%\end{footnotesize}
Setting $p=1$  (\ref{homm2}), we obtain
%\begin{footnotesize}
\begin{eqnarray}
-(1+q)(q^{2n+1}-1)f(L_{-1},L_{1})+q(1+q^{n+1})(q^{n-1}-1)f(L_{n+1},L_{-n-1})\nonumber\\+(1+q^{n})(1-q^{n+2})f(L_{n},L_{-n})=0.
\label{homm2s}
\end{eqnarray}
%\end{footnotesize}
Hence,
\begin{equation}\label{19avril}
 f(L_{n+1},L_{-n-1})=\frac{1}{q}\frac{1+q^{n}}{1+q^{n+1}} \frac{1-q^{n+2}}{1-q^{n-1}}f(L_{n},L_{-n})-\frac{1}{q}\frac{1+q}{1+q^{n+1}}\frac{1-q^{2n+1}}{1-q^{n-1}}f(L_{1},L_{-1}),\  for \ n\neq 1,
\end{equation}
\begin{equation}\label{19april}
f(L_{n},L_{-n})=q\frac{1+q^{n+1}}{1+q^{n}}\frac{1-q^{n-1}}{1-q^{n+2}} f(L_{n+1},L_{-n-1})+\frac{1+q}{1+q^{n}}\frac{1-q^{2n+1}}{1-q^{n+2}}f(L_{1},L_{-1}),\  for \ n\neq -2.
\end{equation}
Letting
\begin{equation*}
 \alpha_{n}= \frac{1}{q}\frac{1+q^{n-1}}{1+q^{n}} \frac{1-q^{n+1}}{1-q^{n-2}} \ and \
    \beta_{n}=-\frac{1}{q}\frac{1+q}{1+q^{n}}\frac{1-q^{2n-1}}{1-q^{n-2}}
\end{equation*}

From the  formula (\ref{19avril}) we get
%Then
%\begin{footnotesize}
%\begin{eqnarray}
%q(1+q^{n})f(L_{n},L_{-n})&=&(1+q^{n})(1-q^{n}){1-q^{n-1}}f(L_{n},L_{-n})-\frac{1}{q}\frac{1+q}{1+q^{n+1}}\frac{1-q^{2n+1}}{1-q^{n-1}}
%f(L_{1},L_{-1}).
%\label{homm2ss}
%\end{eqnarray}
%\end{footnotesize}

%\begin{footnotesize}
%\begin{eqnarray*}
%(1+q^{n+1})(q^{n}-q)f(L_{n+1},L_{-n-1})&=&- (1+q^{n})(1-q^{n+2})f(L_{n},L_{-n})-(1+q)(q^{2n+1}-1)f(L_{1},L_{-1}),
%\end{eqnarray*}
%so,
%\begin{eqnarray*}
%f(L_{n},L_{-n})&=&-\frac{1+q^{n-1}}{1+q^{n}}\frac{1-q^{n+1}}{q^{n-1}-q}f(L_{n-1},L_{1-n})-\frac{1+q}{1+q^{n}}\frac{q^{2n-1}-1}{q^{n-1}-q}f(L_{1},L_{-1}).\label{nuit}
%\end{eqnarray*}

%\end{footnotesize}
%We deduce that $f(L_{n},L_{-n})$ is of the form \\

 $f(L_{n},L_{-n})=a_{n}f(L_{1},L_{-1})+b_{n}f(L_{2},L_{-2}),$ for $n>2$,
 where
\begin{eqnarray*}
% \nonumber to remove numbering (before each equation)
    a_{n}=\beta_{n}+\alpha_{n}\beta_{n-1}+\alpha_{n}\alpha_{n-1}\ \beta_{n-2}+\cdots+\alpha_{n}\alpha_{n-1}\cdots \alpha_{4}    \beta_{3},
\end{eqnarray*}
 %$$a _{n}=\frac{1+q}{1+q^{n}}\frac{q^{2n-1}-1}{q(1-q^{n-2})}-\frac{1}{q^{n}}\frac{1+q}{1+q^{n}}(1-q^{n+1})(1-q^{n})(1-q^{n-1})\sum_{k=3}^{n-1}
% \frac{q^{k-1}(1-q^{2k-1})}{(1-q^{k})(1-q^{k-1})(1-q^{k-2})}$$
  and $$b_{n}= \frac{1}{q^{n-2}}\frac{1+q^{2}}{1+q^{n}}\frac{(1-q^{n+1})(1-q^{n})(1-q^{n-1})}{(1-q^{3})(1-q^{2})(1-q)}.$$
%We can show that
%\begin{eqnarray}
%a _{n}&=&\frac{1+q}{(1-q^{3})(1-q)^{2}}(\{-n\}-\{n\}).
%\end{eqnarray}
Letting
\begin{equation*}
 \alpha_{n}'= q\frac{1+q^{n+1}}{1+q^{n}}\frac{1-q^{n-1}}{1-q^{n+2}}  \ and \
    \beta_{n}'=q\frac{1+q}{1+q^{n}}\frac{1-q^{2n+1}}{1-q^{n+2}}.
\end{equation*}

From the  formula (\ref{19april}) we get
 $f(L_{n},L_{-n})=a_{n}'f(L_{-1},L_{1})+b_{n}'f(L_{-2},L_{2}),$ for $n<-2$,
 where
\begin{eqnarray*}
% \nonumber to remove numbering (before each equation)
    a_{n}'=\beta_{n}'+\alpha_{n}\beta_{n+1}'+\alpha_{n}'\alpha_{n+1}'\ \beta_{n+2}'+\cdots+\alpha_{n}'\alpha_{n+1}'\cdots \alpha_{-4}'    \beta_{-3}',
\end{eqnarray*}

\begin{equation*}
 b_{n}' =\frac{1}{q^{n+2}}\frac{1+q^{-2}}{1+q^{n}}\frac{(1-q^{n-1})(1-q^{n})(1-q^{n+1})}{(1-q^{-3})(1-q^{-2})(1-q^{-1})}=-b_{-n}.
\end{equation*}

\textbf{Case 2}: $X=(L_{n}, L_{m},G_{p}).$\\
By (\ref{ddd1}), we have
\begin{eqnarray*}
-f([L_{n},L_{m}],\alpha( G_{p}))+f([L_{n},G_{p}],\alpha(L_{m}))+f(\alpha(L_{n}),[L_{m},G_{p}])&=&0.
\end{eqnarray*}
Since $[L_{n},G_{p}]=(\{p+1\}- \{n\})G_{n+p}$ and $\alpha(G_{n})=(1+q^{n+1})G_{n}$, then

%%%%%%%%%%%%%%%%%%%%%%%%%%%%%%%%%%%%%%%%%%%%%%%%%%%%%%%%%%%%%%%%%
%\begin{footnotesize}
\begin{eqnarray}
-(1+q^{p+1})(\{m\}- \{n\})f(L_{n+m},G_{p})+(1+q^{m})(\{p+1\}- \{n\})f(G_{n+p},L_{m})+\nonumber\\ (1+q^{n})(\{p+1\}- \{m\}) f(L_{n},G_{m+p})
   =0.\label{homm3}
\end{eqnarray}
%\end{footnotesize}
Taking $m=0$ in (\ref{homm3}), we obtain
\begin{equation}\label{20april}
 (1-q^{n+p+1})f(L_{n},G_{p})=(q^{n}-q^{p+1})f(L_{0},G_{n+p}).
\end{equation}
Then
\begin{equation*}
  f(L_{n},G_{p})=\frac{q^{n}-q^{p+1}}{1-q^{n+p+1}}f(L_{0},G_{n+p})\ \textrm{for} \ n+p+1 \neq 0.
\end{equation*}
Taking $n=1,\ p=-2$ in (\ref{20april}), we obtain $f(L_{0},G_{-1})=0.$\\
Taking $\ m=-n,\ p=-1$ in (\ref{homm3}),  we obtain (with $f(L_{0},G_{-1})=0$)
\begin{equation*}
f(L_n,G_{-n-1})=-f(L_{-n},G_{n-1}).
\end{equation*}
Then $f(L_1,G_{-2})=-f(L_{-1},G_{0})$,\ $f(L_2,G_{-3})=-f(L_{-2},G_{1}). $\\
Taking $m=-1,\ p=-n\ $ in (\ref{homm3}), we obtain
%\begin{eqnarray*}
%-(1+q^{1-n})(q^n-q^{-1})f(L_{n-1},G_{-n})+(1+q^{-1})(q^{-n}- q^{-n+1})f(G_{0},L_{-1})+(1+q^{-n})(q^{-n+1}- q^{-n}) f(L_{n},G_{-n-1})
  % &=&0.
%\end{eqnarray*}
%\begin{footnotesize}
\begin{eqnarray*}
-(1+q^{n-1})(q^{n+1}-1)f(L_{n-1},G_{-n})+(1+q)(q^{2n-1}- 1)f(G_{0},L_{-1})+\\q(1+q^{n})(q^{n-2}- 1) f(L_{n},G_{-n-1})
   =0.
\end{eqnarray*}
%\end{footnotesize}
Hence
%\begin{footnotesize}
\begin{eqnarray}
 f(L_{n},G_{-n-1})=\frac{1}{q}\frac{1+q^{n-1}}{1+q^{n}}\frac{1-q^{n+1}}{1-q^{n-2}}f(L_{n-1},G_{-n}) -\frac{1}{q}\frac{1+q}{1+q^{n}}
 \frac{1-q^{2n-1}}{1-q^{n-2}}f(L_{1},G_{-2})\nonumber \\  \textrm{for}\  n\neq 2\ , \label{nuitnuit}
\end{eqnarray}
%\end{footnotesize}
\begin{eqnarray}
f(L_{n-1},G_{-n})=q\frac{1+q^{n}}{1+q^{n-1}} \frac{1-q^{n-2}}{1-q^{n+1}} f(L_{n},G_{-n-1})+\frac{1+q}{1+q^{n-1}}  \frac{1-q^{2n-1}}{1-q^{n+1}}f(L_{-1},G_{0}) \nonumber \\  \textrm{for}\  n\neq -1.\label{20avril}
\end{eqnarray}
 %Taking $m=-n,\ p=-1\ $ in (\ref{homm3}), and since $f(L_{0},G_{-1})=0$, we obtain $ f(L_{n},G_{-n-1})= -f(L_{-n},G_{n-1})$.\\ So, $f(L_{1},G_{-2})= -f(L_{-1},G_{0}).$
  Comparing (\ref{19avril}) and (\ref{nuitnuit}), we deduce that
 \begin{equation*}
f(L_{n},G_{-n-1})=a_{n}f(L_{1},G_{-2})+b_{n}f(L_{2},G_{-3})\  \textrm{for}\  n>2.
 \end{equation*}
 Comparing (\ref{19april}) and (\ref{20avril}),  we deduce that
  \begin{equation*}
f(L_{n},G_{-n-1})=a_{n}'f(L_{-1},G_{0})+b_{n}'f(L_{-2},G_{1})\  \textrm{for}\  n<-2,
 \end{equation*}
where $a_{n},\ b_{n},\ a_{n}'$ and $b_{n}'$ are defined as in the previous case.\\
%%%%%%%%%%%%%%%%%%%%%%%%%%%%%%%%%%%%%%%%%%%%%%%%%%%%%%
\textbf{Case 3}: $X=(L_{n},\ G_{m},\ G_{p}).$\\
By (\ref{ddd1}), we have

\begin{eqnarray*}
-f([L_{n},G_{m}],\ \alpha(G_{p}))-f([L_{n},G_{p}],\ \alpha(G_{m}))+f(\alpha(L_{n}),[G_{m},G_{p}])=0.
\end{eqnarray*}
So
%\begin{footnotesize}
\begin{eqnarray}
-(1+q^{p+1}) (\{m+1\}- \{n\})f(G_{m+n},G_{p})-(1+q^{m+1}) (\{p+1\}\\- \{n\})f(G_{p+n},G_{m})=0.\label{homm4}
\end{eqnarray}
%\end{footnotesize}
Taking $m=0,\ $ in (\ref{homm4}), we obtain
\begin{eqnarray}
 (1+q^{p+1}) (\{1\}- \{n\})f(G_{n},G_{p})+(1+q) (\{p+1\}- \{n\})f(G_{p+n},G_{0})&=&0.\label{homm5}
\end{eqnarray}
Taking $  n=1 $ and replacing $p+1$ by $k$ in (\ref{homm5}), we obtain
\begin{equation*}
f(G_{k},G_{0})=0\ \textrm{for}\ k \neq1.  
\end{equation*}
Hence,
\begin{equation*}
  f(G_{n},G_{p}) =0   \ \textrm{for}\ n \neq1,\ p+n\neq 1.
\end{equation*}
Taking  $p=1-n$ in   (\ref{homm5}),  we obtain
\begin{equation}\label{avril}
  f(G_{n},G_{1-n})=-\frac{1+q}{1+q^{2-n}}(1+q^{1-n})f(G_{1},G_{0}) \ \ (n\neq 1).
\end{equation}
Replacing $n$ by $1-n$ and $ p$ by $n$ in (\ref{homm5}), one can obtain
\begin{equation}
 f(G_{1-n},G_{n})=-\frac{1+q}{1+q^{n+1}}(1+q^{n})f(G_{1},G_{0}) \ \ (n\neq 0).
\end{equation}
Then using  super skew-symmetry of $f$, one can obtain $f(G_{1},G_{0})=0$.

We deduce that $f(G_{n},G_{m})=0,\ \forall n,\ m\in \mathbb{Z}$.
% where

%\begin{equation}
%e_{n}=
%\left\{ \begin{array}{ll}
%(1+q^{1-n})\frac{1+q}{1+q^{2-n}}\quad \textrm{if $n\neq 1$,}\\
%1 \  \ \ \ \ \ \ \ \ \ \ \ \ \ \ \ \ \ \ \quad  \textrm{,if $n=1$.}          \\
%\end{array} \right.
%\end{equation}

We denote by $g$ the linear map defined on   $\mathcal{W}^{q}$   by
\begin{eqnarray*}
 g(L_{n})  &=&-\frac{1}{\{n\}}f(L_{0},L_{n})\ \textrm{if $n\neq 0,$}\
 g(L_{0})=-\frac{q}{q+1}f(L_{1},L_{-1}),\\  g(G_{n}) & =&\frac{1}{\{n+1\}}f(L_{0},G_{n})  \textrm{ if $n\neq -1,\ $}\  g(G_{-1})=-\frac{q}{q+1}f(L_{1},G_{-2}).
\end{eqnarray*}
It is easy to verify that $\displaystyle \delta(g)(L_{n},L_{p})=\frac{q^{p}-q^{n}}{1-q^{n+p}}f(L_{0},L_{n+p}) \ (p\neq -n),\\ \delta(g)(L_{n},L_{-n})=
0, \ \delta(g)(L_{n},G_{p})=\frac{q^{p+1}-q^{n}}{1-q^{p+n+1}}f(L_{0},G_{n+p})\ (p+n\neq -1)$ \\and $\delta(g)(G_{n},G_{p})=0.$\\
Let $h=f-\delta^{1} g$. Then we have
\begin{eqnarray*}
% \nonumber to remove numbering (before each equation)
  h(L_{1},L_{-1})&=&h(L_ {1} ,G_{ -2})=0,\\
  h(L_n,L_p) &=& 0 \ \textrm{for}\ n+p\neq0,\\
  h(L_n,G_p) &=& 0 \ \textrm{for}\   n+p\neq -1,\\
  h(G_{n},G_{p})&=&0 \ \textrm{for all}\  n,\ p\in\mathbb{Z}.
\end{eqnarray*}
Since $h$ is a 2-cocycle we deduce that:
\begin{eqnarray*}
% \nonumber to remove numbering (before each equation)
  h(L_{n},L_{-n})&=&a_{n}h(L_{1},L_{-1})+b_{n}h(L_{2},L_{-2})=b_{n}h(L_{2},L_{-2}) \\
  h(L_n,G_{-n-1}&=&a_n h(L_ {1} ,G_{ -2})+b_nf(L_{2},G_{-3})=b_nh(L_{2},G_{-3}) \\
 h(G_n,G_m)  &=&0.
\end{eqnarray*}

Using the equalities above, we deduce that
\begin{eqnarray*}
&& h(xL_{n}+yG_{m}, zL_{p}+tG_{k})\\
 &&=   xz
 \delta_{n+p,0}b_{n}h(L_{2},L_{-2})
+xt\delta_{n+k,-1}b_{n}h(L_{2},G_{-3})
 -yz\delta_{p+m,-1} b_{p}    h(L_{2},G_{-3})\\
 &=&h(L_{2},L_{-2})\varphi_{1}(xL_{n}+yG_{m}, zL_{p}+tG_{k})+h(L_{2},G_{-3})\varphi_{2}(xL_{n}+yG_{m}, zL_{p}+tG_{k}),
 \end{eqnarray*}
that ends the proof.
\end{proof}

\begin{cor}
Let $(V,\beta)$ be  a $\mathcal{W}^q$-module and $f  \in C^2(\mathcal{W}^q,V)$. Define a bracket and a morphism on $\widetilde{\mathcal{W}^q }=\mathcal{W}^q\oplus V$ by
\begin{eqnarray*}
[(x,a),(y,b)]_{\widetilde{\mathcal{W}^q }}&=&([x,y],f(x,y))\\
\widetilde{\alpha}(x,a)&=&(\alpha(x),a)\ \ \forall x,\ y \in \mathcal{W}^q,\ a,\ b\in V.
\end{eqnarray*}
The triple $(\widetilde{\mathcal{W}^q },[.,.]_{\widetilde{\mathcal{W}^q }},\widetilde{\alpha })$ is a Hom-Lie superalgebra if and only if $f$ is in $\displaystyle \mathbb{C}[\varphi_{1}]\oplus \mathbb{C}[\varphi_{2}]$
\end{cor}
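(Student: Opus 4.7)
The plan is to reduce the corollary to the central extension construction of Section~4 combined with the classification of $H^2_{r,0}(\mathcal{W}^q)$ just established. The bracket
$$[(x,a),(y,b)]_{\widetilde{\mathcal{W}^q}}=([x,y],f(x,y))$$
has exactly the form of a central extension (the second component depends only on $f$, not on any action of $\mathcal{W}^q$ on $V$), so I first verify that the three axioms of a Hom-Lie superalgebra on $\widetilde{\mathcal{W}^q}$ translate into $f$ being a hom-$2$-cocycle, and then invoke the cohomology computation to identify the cocycle condition with membership in $\mathbb{C}[\varphi_1]\oplus\mathbb{C}[\varphi_2]$.

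First I would check each axiom separately, mirroring the proof of the central extension theorem in Section~4. Super skew-symmetry of $[\cdot,\cdot]_{\widetilde{\mathcal{W}^q}}$ is immediate from the super skew-symmetry of $[\cdot,\cdot]$ on $\mathcal{W}^q$ and of $f$. For $\widetilde{\alpha}$ to be a morphism of the bracket, one compares
$$\widetilde{\alpha}\bigl([(x,a),(y,b)]_{\widetilde{\mathcal{W}^q}}\bigr)=\bigl(\alpha([x,y]),f(x,y)\bigr)$$
with
$$[\widetilde{\alpha}(x,a),\widetilde{\alpha}(y,b)]_{\widetilde{\mathcal{W}^q}}=\bigl([\alpha(x),\alpha(y)],f(\alpha(x),\alpha(y))\bigr),$$
and using multiplicativity of $\mathcal{W}^q$ this reduces precisely to the hom-cochain condition $f\circ\alpha^{\otimes 2}=f$.

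For the super Hom-Jacobi identity, expanding the graded cyclic sum
$$\circlearrowleft_{x,y,z}(-1)^{|x||z|}\bigl[\widetilde{\alpha}(x,a),[(y,b),(z,c)]_{\widetilde{\mathcal{W}^q}}\bigr]_{\widetilde{\mathcal{W}^q}}$$
yields a pair whose first coordinate vanishes by the super Hom-Jacobi identity of $\mathcal{W}^q$ and whose second coordinate equals
$$\circlearrowleft_{x,y,z}(-1)^{|x||z|}f\bigl(\alpha(x),[y,z]\bigr),$$
which, up to sign, is exactly $\delta^2_r(f)(x,y,z)$. Consequently $(\widetilde{\mathcal{W}^q},[\cdot,\cdot]_{\widetilde{\mathcal{W}^q}},\widetilde{\alpha})$ is a Hom-Lie superalgebra if and only if $f$ is an even hom-$2$-cocycle, i.e.\ $f\in Z^2_{r,0}(\mathcal{W}^q,V)$.

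Finally I would invoke the preceding theorem, which established $H^2_{r,0}(\mathcal{W}^q)=\mathbb{C}[\varphi_1]\oplus\mathbb{C}[\varphi_2]$, to conclude that a cochain $f$ is a $2$-cocycle if and only if its cohomology class lies in $\mathbb{C}[\varphi_1]\oplus\mathbb{C}[\varphi_2]$; identifying $f$ with this class gives the stated equivalence. There is no substantive new obstacle: all the real content was the explicit computation of $H^2_{r,0}(\mathcal{W}^q)$ in the preceding theorem, and the corollary is a direct repackaging of that result through the central extension correspondence of Section~\ref{cohomologie2}.
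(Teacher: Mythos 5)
Your proposal is correct and follows essentially the route the paper intends: the corollary is stated without a separate proof precisely because it is the central extension criterion of Section~4 (Hom-Lie structure on $\mathcal{G}\oplus V$ with bracket $([x,y],f(x,y))$ if and only if $f$ is a $2$-cocycle) specialized to $\mathcal{W}^q$, combined with the computation $H^{2}_{r,0}(\mathcal{W}^{q})=\mathbb{C}[\varphi_{1}]\oplus\mathbb{C}[\varphi_{2}]$ from the preceding theorem. Your verification of the axioms simply re-derives that earlier theorem, and your reading of ``$f$ is in $\mathbb{C}[\varphi_{1}]\oplus\mathbb{C}[\varphi_{2}]$'' as ``$f$ is cohomologous to a combination $c_{1}\varphi_{1}+c_{2}\varphi_{2}$'' matches the paper's (equally loose) formulation.
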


\end{document}